\documentclass[11pt]{article}

\usepackage{amsfonts,amssymb,amsmath,amsthm,mathtools}
\usepackage{url}
\usepackage{enumerate}
\usepackage[T1]{fontenc}
\usepackage{mathrsfs}
\usepackage{psfrag}
\usepackage{pst-all}
\usepackage[dvips]{graphicx}
\usepackage{xcolor}
\usepackage{tikz}
 \usepackage{soul}

\usepackage{cases}
\usepackage{caption, subcaption}
\usepackage{enumitem}
\usepackage[sc]{mathpazo}
\linespread{1.05}         
\usepackage[utf8]{inputenc}

\definecolor{myPurp}{RGB}{105,73,148}
\definecolor{myRed}{RGB}{228,37,53}
\definecolor{myCyan}{RGB}{0,95,114}

\usepackage{hyperref}
\hypersetup{
     colorlinks=true,
     linkcolor=myRed,
     citecolor = myPurp,      
     urlcolor= myCyan,
     }

\usepackage{geometry}
\geometry{
  a4paper,         
  textwidth=16cm,  
  textheight=24cm, 
  heightrounded,   
  hratio=1:1,      
}

\urlstyle{sf}
\newtheorem{theorem}{Theorem}[section]

\newtheorem{lemma}[theorem]{Lemma}
\newtheorem{proposition}[theorem]{Proposition}
\newtheorem{corollary}[theorem]{Corollary}

\newtheorem{definition}[theorem]{Definition}
\newtheorem{remark}[theorem]{Remark}
\numberwithin{equation}{section}

\newcommand{\supp}{\textsc{supp}}

\newcommand{\psl}{\operatorname{PSL}_2(\R)}
\renewcommand{\sl}{\operatorname{SL}_2(\R)}
\renewcommand{\so}{\operatorname{SO}_2(\R)}

\renewcommand{\a}{\textsc{area}}
\newcommand{\A}{\mathbf{A}}
\newcommand{\Id}{\mathrm{Id}}
\newcommand{\p}{\mathrm{per}}

\newcommand{\diam}{\mathrm{diam}}
\newcommand{\K}{\mathcal{K}}

\renewcommand{\H}{\mathbb{H}}
\newcommand{\R}{\mathbb{R}}

\newcommand{\Hdim}{\operatorname{Hdim}}

\newcommand{\Ec}{\mathcal{E}_c^\pi}
\newcommand{\CH}{\operatorname{CH}}
\renewcommand{\SS}{\mathcal{S}}
\renewcommand{\o}{\mathbf{1}}
\newcommand{\1}{\mathbf{1}}

\newcommand{\E}{\mathcal{E}}

\DeclareMathOperator{\Stab}{Stab}
\DeclareMathOperator{\Isom}{Isom}

\newcommand{\qeda}{\hfill \ensuremath{\Box}}

\usepackage{array,xtab,ragged2e}
\newlength\mylengtha
\newlength\mylengthb
\newcolumntype{P}[1]{>{\RaggedRight}p{#1}}
\tabcolsep=3pt



\title{\bf An explicit exotic representation of a rank-one simple Lie group via convex bodies}
\author{François Fillastre, Yusen Long, David Xu}
\date{}

\begin{document}

\maketitle

\begin{abstract}
In \cite{DP}, Delzant and Py showed that there exist continuous irreducible isometric actions of \(\psl\) on the infinite-dimensional hyperbolic space \(\H^\infty\). Such continuous irreducible actions do not exist on the hyperbolic  spaces \(\H^n\) when \(n>2\) and their associated embeddings \(\H^2 \to \H^\infty\) given by the orbit maps were later called \emph{exotic} by Monod and Py in \cite{MP}. In this article, we produce a continuous and irreducible representation  of \(\psl\to \Isom(\H^\infty)\) using the hyperbolic model for convex bodies introduced in \cite{DF}. This yields a convex cocompact \(\psl\)-action on the infinite-dimensional hyperbolic space \(\H^\infty\), of which the compact quotient over the minimal \(\psl\)-invariant convex set is homeomorphic to the 2-dimensional oriented Banach--Mazur compactum. Moreover, we study the geometry of one of its orbit maps and compute the Hausdorff dimension of the limit set of this representation.
\end{abstract}

\tableofcontents

\section{Introduction}

Let \(\H^n\) be the \(n\)-dimensional real hyperbolic space for \(n\geq 1\). With a theorem of Karpelevich and Mostow \cite{karpelevic1953surfaces,mostow1955some} (or see \cite{boubel2004isometric} for a geometric proof in the hyperbolic cases), one can conclude the following fact: {\it any continuous isometric action of the group \(\Isom(\H^n)\) on \(\H^m\), for \(0<n\leq m<\infty\), preserves the image of a totally geodesic embedding \(\H^n\hookrightarrow\H^m\,.\)} However, this conclusion no longer holds if \(m=\infty\). 

In \cite{BIM}, Burger, Iozzi and Monod first constructed a family, parametrised by \(0<t<1\), of continuous representations \(\rho_t^T\colon\mathrm{Aut}(T)\to \Isom(\H^\infty)\), where \(T\) is a separable simplicial tree. This can be seen as realised by an \(\mathrm{Aut}(T)\)-equivariant quasi-isometric embedding \(T\hookrightarrow\H^\infty\). More than a decade later, this construction was generalised to any real tree by \cite[\S 13.1]{DSU} and \cite{monod}, thus also providing a family of continuous representations \(\Isom(\H^1)\simeq\Isom(\R)\to\Isom(\H^\infty)\) for \(0<t<1\), which do not preserve a bi-infinite geodesic line in \(\H^\infty\). Similar phenomena for the hyperbolic plane \(\H^2\) have also been observed in \cite{DP}, where Delzant and Py built a family of continuous representations \(\rho_t\colon\psl\to\Isom(\H^\infty)\) indexed by \(0<t<1\) that are irreducible, \emph{i.e.} the \(\rho_t\) do not preserve any proper hyperbolic subspace of \(\H^\infty\) and do not preserve any point on the boundary. Their construction was then generalised by Monod and Py in \cite{MP} to obtain continuous irreducible representations \(\Isom(\H^n)\to \Isom(\H^\infty)\). In the sequel, we will call a continuous irreducible representation of this kind an {\it exotic representation}. Later, by introducing the notion of {\it kernel of hyperbolic type}, Monod and Py were able to produce exotic representations \(\Isom(\H^\infty)\to \Isom(\H^\infty)\) \cite{MP2}. Moreover, in \cite{MP, MP2}, it is shown that the examples above actually give the complete list of exotic representations \(\Isom(\H^n)\to \Isom(\H^\infty)\) for \(2\leq n\leq \infty\) up to conjugacy in \(\Isom(\H^\infty)\). 


The main topic of this article will be an explicit exotic representation
\[\Isom^+(\H^2)\simeq \psl\to \Isom(\H^\infty)~.\]
In \cite{DP}, Delzant and Py studied the \(\psl\)-action on \(L^2(\SS^1)\), where here \(\SS^1\) is identified with the boundary at infinity of the hyperbolic plane (compare with our action in Section~\ref{sec:the psl action}). They defined a family of Lorentzian forms, which are invariant under the \(\psl\)-action, that allow them to build a hyperbolic space of infinite dimension and induce a family of irreducible representations \(\psl\to\Isom(\H^\infty)\). See \S\ref{section: comparison} for the construction of one of such representations.

In the present  article, we will present another construction of an  exotic representation \[\rho:\psl\to \Isom(\H^\infty)\] that is different from  the Delzant--Py construction or the Monod--Py construction (see \S\ref{section: comparison}). It is based on a hyperbolic model for convex bodies that was first introduced by Debin and the first author \cite{DF} and later studied by the second author \cite{long} for infinite-dimensional cases. It will appear that our \(\rho\) is conjugate to one of the representations they introduced. Similarly to \cite{DP}, our representation defines actually a  representation of \(\Isom(\H^2)\), see Remark~\ref{rep}.

 Let us briefly explain our construction of \(\rho\).  The area of symmetric convex bodies in the Euclidean plane can be polarised to obtain the so-called \emph{mixed area},  that satisfies the \emph{Minkowski inequality}, which resembles  the reversed Cauchy--Schwarz inequality, see Remark~\ref{remark area}. Identifying convex bodies with their support functions, the mixed area can be extended to a bilinear form over a Sobolev space, and the Poincaré--Wirtinger inequality says that it has a Lorentzian signature (Lemma~\ref{lem encadrement A}). In this model, the plane symmetric convex bodies with given constant area are naturally identified to a subset \(C_\K\) of an infinite-dimensional hyperbolic space \(\H^\infty\). In particular, this gives 
a map from  the space \(\E_c^\pi\) of symmetric ellipses with fixed area \(\pi\) to \(\H^\infty\). The space \(\E_c^\pi\) is the \(\psl\) orbit of the centred unit disc, hence we may identify \(\E_c^\pi\)  and \(\H^2\) to obtain a map  \[\iota\colon \H^2 \simeq \E^\pi_c \hookrightarrow \H^\infty\] that enjoys some striking geometric properties:
\begin{theorem}\label{thm principal}
    The map \(\iota:\H^2\to\H^\infty\) satisfies the following properties: 
    \begin{enumerate}[label=(T\arabic*), topsep=0pt, itemsep=-1ex, partopsep=1ex, parsep=1ex ]
      \item\label{T1} The image of \(\iota\) is an embedded minimal surface with constant curvature \(-\frac{8}{3}\) (Proposition~\ref{prop : smooth}, Proposition~\ref{prop: curvature} and Proposition~\ref{prop: minimal surface}).
 \item\label{T2} The embedding \(\iota\) is \(\psl\)-equivariant and induces an exotic  representation (\emph{i.e.} continuous irreducible) \(\rho\colon\psl\to \Isom(\H^\infty)\) (Proposition~\ref{prop : irreducible}).
 \item\label{T3}  The map \(\iota\) is a quasi-isometric embedding of \(\H^2\) into \(\H^\infty\) (Proposition~\ref{prop: quasi iso}). More precisely, there exists a constant \(C\) such that for all \( x,y \in \H^2\),
$$\left|d_{\mathbb{H}^\infty}(\iota(x),\iota(y))-\frac{1}{2}d_{\mathbb{H}^2}(x,y)\right|\leq C~.$$
\item\label{TX}  The minimal non-empty \(\rho(\psl)\)-invariant closed convex subset of \(\H^\infty\) is \(C_\K\) (Lemma~\ref{lem:ck minimal}), and it is the closed convex hull of \(\iota(\H^2)\) in \(\H^\infty\) (Corollary~\ref{cor: CH}). 
    Its isometry group is isomorphic to \(\psl\)  (Proposition~\ref{isom Ck}), and
    the action is proper and cocompact (Theorem~\ref{prop BM}).
                   \end{enumerate}
\end{theorem}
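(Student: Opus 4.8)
\emph{Overview of the plan.} The four assertions are largely independent, and I would derive each from the propositions cited in the statement, in the order: the $\psl$-equivariance of $\iota$ and the explicit parametrisation of $\iota(\H^2)$ first, then the distance estimate (T3), then the curvature, minimality, irreducibility and convex-core claims, several of which rely on (T3).

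\emph{Geometry of the surface, (T1).} The idea is to make $\iota$ completely explicit. An area-$\pi$ symmetric ellipse is $E_A=\{v\in\R^2:v^{\mathsf T}Av\le1\}$ with $\det A=1$, and its support function is $h_A(\theta)=\sqrt{u_\theta^{\mathsf T}A^{-1}u_\theta}$, where $u_\theta=(\cos\theta,\sin\theta)$. Writing $A^{-1}=\exp(sX)$ with $X$ symmetric, traceless and normalised by $X^2=\Id$ produces the one-parameter family $h(\theta)=\sqrt{\cosh s+\sinh s\,\cos(2\theta-2\psi)}$, and the parametrisation is arranged so that $s=d_{\H^2}(\mathrm{disc},E_A)$. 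Since the hyperboloid model gives $\cosh d_{\H^\infty}(\iota x,\iota y)=\A(h_x,h_y)/\pi$, differentiating twice along a path in $\Ec$ shows that the Riemannian metric $\iota$ induces on $\H^2$ sends a tangent vector represented by $\dot h$ to $-\A(\dot h,\dot h)/\pi$; evaluating at the disc along the explicit family above yields the constant $\tfrac38$, so the induced metric is $\tfrac38$ times the hyperbolic one and hence has curvature $-\tfrac83$. Embeddedness follows from injectivity of $A\mapsto h_A$ (distinct convex bodies have distinct support functions) together with properness, which comes from (T3). For minimality, $\psl$-homogeneity reduces everything to the single point $\iota(\mathrm{disc})$, where the mean curvature vector is fixed by the isotropy group $\so$; but $\so$ acts on the normal space — the span of the Fourier modes $\cos(2k\theta),\sin(2k\theta)$ with $k\ge2$ — without nonzero fixed vectors, so the mean curvature vector vanishes there and hence everywhere.

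\emph{Equivariance and irreducibility, (T2).} Equivariance is essentially built into the construction: the linear $\sl$-action $K\mapsto gK$ sends a support function $h$ to the function $u\mapsto h(g^{\mathsf T}u)$ (using the $1$-homogeneous extension of $h$), which is linear in $h$ and, since $\det g=1$, preserves the area functional $\a$ and hence its polarisation $\A$; it therefore acts by isometries of $\H^\infty$, preserves $\Ec$, and satisfies $\iota\circ g=\rho(g)\circ\iota$, while continuity of $\rho$ follows from continuity of this action on the Sobolev completion. For irreducibility I would exclude a $\rho(\psl)$-fixed point in $\H^\infty$, a fixed point in $\partial\H^\infty$, and a proper invariant totally geodesic subspace. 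The first two are ruled out by (T3): the orbit of $\iota(\mathrm{disc})$ is then unbounded, while an isometric action of $\psl$ on $\H^\infty$ fixing a boundary point would factor through the amenable stabiliser of that point, impossible for the simple non-amenable group $\psl$. A proper invariant totally geodesic subspace corresponds to a proper closed $\psl$-invariant subspace of the Sobolev space of even support functions, which cannot exist because the underlying unitary representation of $\psl$ is irreducible (it is a complementary-series representation), as one sees by decomposing under the maximal compact $\so$, whose invariant subspaces are spanned by blocks of Fourier modes, and then applying a single hyperbolic one-parameter subgroup, which does not preserve that block decomposition.

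\emph{Quasi-isometric embedding, (T3).} Using $\psl$-equivariance I would reduce to $x=\mathrm{disc}$, where $\A(h_x,h_y)=\A(1,h_y)=\tfrac12\int_0^{2\pi}h_y(\theta)\,d\theta=\tfrac12\int_0^{2\pi}\sqrt{\cosh s+\sinh s\,\cos 2\theta}\,d\theta$, with $s=d_{\H^2}(x,y)$. Squeezing this integral between fixed multiples of $\sqrt{e^s}\int_0^{2\pi}|\cos\theta|\,d\theta$ yields $\A(1,h_y)/\pi=e^{s/2}\bigl(\tfrac2\pi+o(1)\bigr)$ uniformly in $s$, and since $d_{\H^\infty}=\operatorname{arccosh}(\A/\pi)$, passing to logarithms gives the additive bound with slope $\tfrac12$; the regime of small $s$ is treated separately, the Minkowski inequality $\A(h_x,h_y)^2\ge\pi^2$ keeping $\A/\pi\ge1$ so that $\operatorname{arccosh}$ makes sense throughout.

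\emph{The convex core, (TX).} First, $C_\K$ is closed and geodesically convex in $\H^\infty$: support functions of symmetric convex bodies form a closed convex cone stable under Minkowski addition, the conditions $h+h''\ge0$ and $\a(h)=\pi$ are closed, and the geodesic segment joining two such points is a positive combination of the endpoints renormalised to area $\pi$, hence again such a body. For minimality, any nonempty closed convex $\rho(\psl)$-invariant $Z\subseteq\H^\infty$ is in particular $\so$-invariant, so the nearest-point projection of $\iota(\mathrm{disc})$ onto $Z$ is $\so$-fixed; since the constants are the only $\so$-invariant support functions, the $\so$-fixed set of $\H^\infty$ is $\{\iota(\mathrm{disc})\}$, forcing $\iota(\mathrm{disc})\in Z$ and hence $\CH(\iota(\H^2))\subseteq Z$. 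The main obstacle is the remaining identification $C_\K=\CH(\iota(\H^2))$, that is, the inclusion $C_\K\subseteq\CH(\iota(\H^2))$: the key point is that, although no area-$\pi$ ellipse resembles a segment, the geodesic midpoint in $\H^\infty$ of a very long thin area-$\pi$ ellipse pointing along $e_1$ and of one pointing along $e_2$ converges to $\iota$ of an area-$\pi$ square, a Minkowski sum of two segments (renormalisation absorbing the blow-up of the Minkowski area); suitably weighted geodesic barycenters of $m$ such ellipses then realise $\iota$ of every symmetric polygon, whence every point of $C_\K$ by the planar fact that every symmetric convex body is a limit of symmetric polygons. Properness of the $\rho(\psl)$-action follows from (T3) (a sequence tending to infinity in $\psl$ carries $\iota(\mathrm{disc})$ to infinity in $\H^\infty$); cocompactness follows by putting each area-$\pi$ body into John position, where the area constraint confines it between concentric discs of radii $1/\sqrt2$ and $\sqrt2$, a compact family by Blaschke selection, so that $C_\K/\rho(\psl)$ is the space of symmetric planar convex bodies modulo $\mathrm{GL}_2^+(\R)$, the oriented Banach--Mazur compactum, the natural bijection being continuous and hence — both spaces being compact Hausdorff — a homeomorphism. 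Finally, one characterises $\iota(\H^2)$ metrically inside $C_\K$ as the $\psl$-orbit of its unique most symmetric point (the disc, the only body whose stabiliser in $\Isom(C_\K)$ is a maximal compact $\so$), so that every isometry of $C_\K$ preserves $\iota(\H^2)$ and restricts to an isometry of $\H^2\simeq\psl$; this restriction homomorphism is injective because an isometry fixing $\iota(\H^2)$ pointwise fixes its convex hull $C_\K$, and surjective because the composite $\psl\xrightarrow{\rho}\Isom(C_\K)\to\Isom(\H^2)$ is the standard isomorphism, whence $\Isom(C_\K)=\rho(\psl)\cong\psl$.
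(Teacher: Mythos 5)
Most of your proposal tracks the paper's own route quite closely: the explicit induced-metric computation giving \(\tfrac38 g_{\H^2}\), the squeeze \(e^{s/2}|\cos\theta|\le |T_s^\intercal u_\theta|\le e^{s/2}\) for the quasi-isometry constant \(\tfrac12\), the ``only \(\so\)-fixed normal vector is zero'' argument for minimality, the nearest-point-projection argument for minimality of \(C_\K\), and the approximation of squares (and then all symmetric polygons) by renormalised Minkowski sums of thin ellipses to get \(C_\K=\CH(\iota(\H^2))\) are all essentially what the paper does. Your irreducibility argument is a genuinely different route (ruling out fixed points, boundary fixed points, and proper invariant totally geodesic subspaces via linear irreducibility of the action on \(H^1_{\operatorname{even}}\)); it is workable in principle, but the decisive step --- that a hyperbolic one-parameter subgroup sends each \(\so\)-isotypic Fourier block into all the others, so no proper closed sum of blocks is invariant --- is asserted rather than proved, and the phrase ``unitary complementary-series representation'' needs care since \(\rho\) preserves the Lorentzian form \(\A\), not a Hilbert inner product. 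The paper's commutant argument (a commutant must fix \(\o\) up to a scalar by \(\so\)-invariance, hence acts as that scalar on the dense span of ellipse support functions) avoids these computations entirely.

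The one genuine gap is in the identification \(\Isom(C_\K)\simeq\psl\). Your argument characterises \(\iota(\H^2)\) as the orbit of ``the only body whose stabiliser \emph{in \(\Isom(C_\K)\)} is a maximal compact \(\so\)'', but verifying that no other point of \(C_\K\) has such a stabiliser requires already knowing the group \(\Isom(C_\K)\) --- which is precisely what is being computed. A priori, a non-ellipse \(K\in\K_c^\pi\) could have a large stabiliser made of isometries of \(C_\K\) that do not come from \(\rho(\psl)\), and nothing in your argument rules this out; consequently the key claim that every isometry of \(C_\K\) preserves \(\iota(\H^2)\) is unsupported. The paper circumvents this by working with data that is manifestly intrinsic to the metric space \(C_\K\) before any knowledge of its isometry group: the bi-infinite geodesics of \(C_\K\) are exactly the one-parameter families of parallelograms joining two points of \(\Gamma\), and the nearest-point projection of \(\o\) onto such a geodesic is a rhombus (Lemma~\ref{lem: AM-GM}, an AM--GM computation with perimeters). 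Any isometry \(g\) therefore sends the square to a specific parallelogram, from which one reads off an element of \(\psl\) inducing \(g\). To repair your argument you would need a similarly intrinsic characterisation of \(\iota(\H^2)\) inside \(C_\K\), or an a priori structural bound on \(\Isom(C_\K)\).
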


Point  \ref{TX} says that the action of \(\rho(\psl)\) is \emph{convex cocompact}, \emph{i.e.} there exists a minimal convex set with Hausdorff compact quotient.
The properties above can be derived from \cite{DP,MP} (see below), but here we  obtain them as  direct consequences of classical results about convex bodies, or by rather elementary computations.

As an illustration, the fact that \(\iota(\H^2)\) is not totally geodesic in \(\H^\infty\)  can be explained heuristically in the following way: the image by \(\iota\) of the axis of a hyperbolic translation of \(\H^2\) is not a geodesic line in \(\H^\infty\), see Figure~\ref{fig:quasi geodesic}.

\begin{figure}[h!]
\begin{center}
\psfrag{P}{$P$}
\psfrag{P1}{$P'$}
\psfrag{S}{$S$}
\includegraphics[width=0.5\linewidth]{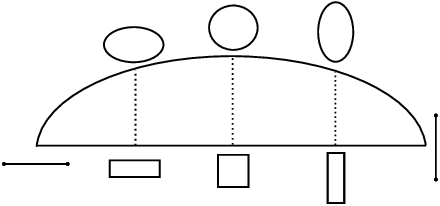}\caption{
The axis of a hyperbolic translation of \(\H^2\)  
acting on ellipses in \(\H^\infty\) is not a geodesic line: the \(\H^\infty\)-geodesic between the two endpoints at infinity of the geodesic line (the segments) is the convex combination of the segments, so a path of rectangles. Here the picture is in the Klein model of \(\H^\infty\).
}\label{fig:quasi geodesic}
\end{center}
\end{figure}

The point is that \(\Gamma\), the boundary at infinity of \(\iota(\H^2)\subset \H^\infty\), is naturally identified with the space of  unit symmetric segments, which is homeomorphic to the circle \(\mathbb{P}\R^2\simeq \SS^1\).

Moreover, we  obtain the following results that are new.
\begin{theorem}\label{prop:somme ellipse}
 \begin{enumerate}[label=(T\arabic*), topsep=0pt, itemsep=-1ex, partopsep=1ex, parsep=1ex]\setcounter{enumi}{4}
 The map \(\iota:\H^2\to\H^\infty\) satisfies the following additional properties: 
   \item\label{T4}   Any \(n+1\) distinct points in \(\iota(\H^2)\subset \H^\infty\) span an \(n\)-dimensional hyperbolic subspace of \(\H^\infty\) (Corollary~\ref{cor: k dimension}).
\item\label{T5} The Hausdorff dimension of \(\Gamma\) is \(2\) with respect to the visual distance of \(\partial_\infty \H^\infty\) (Corollary~\ref{cor:haus}, Proposition~\ref{prop: hausdorff dim general}).
\item \label{T6}  The quotient of \(C_\K/\psl\) is homeomorphic to the 2-dimensional oriented Banach--Mazur compactum (Theorem~\ref{prop BM}).
\end{enumerate}
\end{theorem}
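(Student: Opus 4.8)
The three assertions are largely independent, and I expect only \ref{T4} to require genuine work. For \ref{T4}, the first move is to read the statement inside the model: a point of $\iota(\H^2)$ is the support function $h_E$ of a symmetric area-$\pi$ ellipse $E$, regarded as a vector of the ambient Lorentzian space, and the hyperbolic subspace spanned by $p_0,\dots,p_n$ has hyperbolic dimension one less than $\dim\mathrm{span}(p_0,\dots,p_n)$; thus \ref{T4} is equivalent to the linear independence of the support functions of $n+1$ pairwise distinct area-$\pi$ ellipses. Writing $E=AD$ with $A\in\sl$ and $D$ the unit disc, one has $h_{AD}(t)=\|A^{\mathsf T}\n\|=\sqrt{p}\,\sqrt{1+r\cos(2t-\phi)}$, where $p=\tfrac12\Tr(AA^{\mathsf T})\ge1$ and $r=\sqrt{p^2-1}/p\in[0,1)$, and distinct ellipses give distinct data $(r,\phi)$ (with $r=0$ only for $E=D$). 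I would then complexify $t$ and exploit the branch structure: $\sqrt{1+r\cos(2t-\phi)}$ continues holomorphically off the circle with genuine (simple, since $r<1$) square-root branch points exactly at the complex zeros of $1+r\cos(2t-\phi)$, namely $t=\tfrac{\phi+\pi}{2}\pm\tfrac i2\operatorname{arccosh}(1/r)$ modulo $\pi$; distinct ellipses therefore have disjoint branch loci, the locus nearest the real axis being that of the ellipse of largest $r$. Given a vanishing combination $\sum_i c_i h_{E_i}\equiv0$, I would analytically continue it from the real axis to a one-sided neighbourhood of the branch point $t_{i_0}$ of the term with maximal $r_{i_0}>0$ — along a path meeting no other branch point — where every $h_{E_i}$ with $i\ne i_0$ stays holomorphic while $h_{E_{i_0}}(t)\sim c\sqrt{t-t_{i_0}}$ with $c\ne0$; since the continuation is identically $0$, this forces $c_{i_0}=0$, and iterating removes every coefficient (the last surviving term, belonging to $D$ if present, multiplies a nonzero constant). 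The main technical obstacle will be the monodromy bookkeeping in this continuation — arranging that the path reaching $t_{i_0}$ encircles no other branch point.

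For \ref{T5}, I would argue from \ref{T3}: since $\iota$ is a $(\tfrac12,C)$-quasi-isometric embedding it extends to a homeomorphism $\partial_\infty\H^2\to\Gamma$, and comparing Gromov products through the distance estimate (and using that $\H^2$ and $\H^\infty$ are $\mathrm{CAT}(-1)$, so visual distances are comparable to $e^{-(\cdot\mid\cdot)}$) shows this boundary map to be a two-sided $\tfrac12$-snowflake map: $d_{\mathrm{vis}}(\partial_\infty\iota(\xi),\partial_\infty\iota(\eta))\asymp d_{\mathrm{vis}}(\xi,\eta)^{1/2}$. Such a map multiplies Hausdorff dimension by $2$ (the content of Proposition~\ref{prop: hausdorff dim general}), and $\partial_\infty\H^2$ with its visual distance is bi-Lipschitz to the round circle, so $\Hdim\Gamma=2$ (Corollary~\ref{cor:haus}). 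As a cross-check one can compute the visual distance on $\Gamma$ directly and find it comparable to $\sqrt{|v_1\wedge v_2|}$ on the classes of normalised segments $[-v_1,v_1]$ and $[-v_2,v_2]$, since the mixed area of two segments is proportional to $|v_1\wedge v_2|$ — again the square root of arclength on $\SS^1$, giving the same answer.

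For \ref{T6}, recall that $C_\K$ is by construction the set of symmetric area-$\pi$ convex bodies of $\R^2$ with the topology of Hausdorff convergence (equivalently the $\H^\infty$-metric), and $\psl$ acts on it through the linear action of $\sl$, with $-\Id$ acting trivially. I would send the $\sl$-orbit of such a body to its $\mathrm{GL}_2^+(\R)$-orbit, landing in the space of all symmetric convex bodies of $\R^2$ modulo $\mathrm{GL}_2^+(\R)$ — that is, the $2$-dimensional oriented Banach--Mazur compactum — and check that this map is surjective (rescale any body to area $\pi$), injective (if $K$ and $gK$ both have area $\pi$ with $g\in\mathrm{GL}_2^+(\R)$, then $\det g=1$), and continuous (Hausdorff convergence of area-$\pi$ bodies forces $(1-\varepsilon)K\subset K_n\subset(1+\varepsilon)K$ eventually, hence Banach--Mazur convergence, which descends to the quotients). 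Finally $C_\K/\psl$ is compact: by John's theorem every area-$\pi$ symmetric body has an $\sl$-translate squeezed between two fixed concentric discs, and such bodies form a Hausdorff-compact family by the Blaschke selection theorem — this is exactly the cocompactness already asserted in \ref{TX}. A continuous bijection from a compact space onto a Hausdorff one is a homeomorphism, which proves \ref{T6}. The one point requiring care here is the verification that the Hausdorff topology on $C_\K$ and the Banach--Mazur topology on $C_\K/\psl$ induce the same topology.
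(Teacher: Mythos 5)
Your proposal is correct on all three points; \ref{T5} and \ref{T6} follow essentially the paper's own routes (\ref{T5} is exactly the combination of Proposition~\ref{prop: hausdorff dim general} specialised to $t=\tfrac12$ with the explicit visual-distance computation of Proposition~\ref{prop: visual dist}; \ref{T6} is the classical Benz\'ecri/John--Blaschke compactness argument that the paper defers to the literature, fleshed out with the compact-to-Hausdorff bijection criterion). The interesting divergence is in \ref{T4}. The paper proves Proposition~\ref{prop: sum minkowski} — a \emph{positive} Minkowski combination of at least two pairwise non-homothetic ellipses is never an ellipse — by complexifying the two-variable support function $\sqrt{Q_k(x,y)}$ and comparing singular loci, and then deduces Corollary~\ref{cor: k dimension} by asserting that a dependent configuration of $n+1$ hyperboloid points must have one point in the convex hull of the others. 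You instead prove outright that the support functions of distinct area-$\pi$ ellipses are linearly independent over $\R$, by complexifying the single angular variable, locating the square-root branch points of $\sqrt{p}\sqrt{1+r\cos(2t-\phi)}$ at $\tfrac{\phi+\pi}{2}\pm\tfrac{i}{2}\operatorname{arccosh}(1/r)\ (\mathrm{mod}\ \pi)$, observing that distinct ellipses have disjoint branch loci, and peeling off coefficients by monodromy at the branch point closest to the real axis. This is the same analytic-continuation idea at heart, but your version handles arbitrary real coefficients and is exactly what \ref{T4} requires: the linear span of $n+1$ points on the hyperboloid having dimension $\le n$ only gives a linear relation with mixed signs (its coefficients cannot all share a sign, since pairing with $\o$ gives positive perimeters), not that some point lies in the convex hull of the others — four points in a totally geodesic $\H^2\subset\H^\infty$ in convex position have no such point. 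So your direct linear-independence argument buys a cleaner and more robust deduction, at the price of the monodromy bookkeeping you already identify (which is harmless: a branch point of another ellipse on the same vertical line forces equal $\phi$ and hence strictly smaller $r$, so it sits strictly higher, and ties in $r$ occur at different real parts). Two minor points to nail down in writing: in \ref{T5}, the snowflake comparison of visual metrics needs the additive form of the quasi-isometry inequality from \ref{T3} to control Gromov products up to a bounded error before exponentiating; and in \ref{T6}, the continuity of the induced map on quotients requires the standard fact that for symmetric bodies of fixed positive area, Hausdorff convergence implies convergence for the multiplicative Banach--Mazur gauge, which uses a uniform lower bound on the inradius coming from the same John-position normalisation.
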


Point \ref{T4} seems to be a folklore result, but we are not aware of any proof. Here it is an immediate consequence of our interpretation of \(\iota(\H^2)\) as a space of ellipses.
Indeed, it is well known that a finite sum of at least two non-homothetic ellipses is never an ellipse, see Proposition~\ref{prop: sum minkowski}. 
Point \ref{T4} implies that \(\iota(\H^2)\) is not totally geodesic in a stronger sense: a geodesic segment on \(\iota(\H^2)\) (for the induced metric) is never contained in any finite dimensional totally geodesic subspace of \(\H^\infty\). Note that by \ref{T3}, a geodesic line of \(\iota(\H^2)\) (for the induced metric) is a quasi-geodesic of \(\H^\infty\).

We will provide two different proofs of  \ref{T5}. One is via an explicit computation of the visual distance on \(\Gamma\), using our convex bodies interpretation, see Corollary~\ref{cor:haus}. The other one is the fact that, as the orbit map for the exotic representation \(\rho_t\) defined below  satisfies \eqref{eq:quasi t}, the Hausdorff dimension of the limit set is \(\frac{1}{t}\). This also holds for higher dimensional hyperbolic spaces \(\H^n\), see  Proposition~\ref{prop: hausdorff dim general}. The Hausdorff dimension of the limit set plays an important role in the study of group representations in \(\Isom(\H^n)\) and in Patterson--Sullivan theory. For finitely generated Fuchsian groups, \cite{beardon1971inequalities} introduced the \emph{critical exponent} as the infimum of \(s>0\) such that the associated \emph{Poincaré series} converges in dimension \(s\). Patterson then showed in \cite{patterson1976limit} that the Hausdorff dimension of the limit set of a Fuchsian group coincides with its critical exponent. This result was later extended to geometrically finite Kleinian groups by \cite{sullivan1979density}, and to any non-elementary Kleinian groups by Bishop and Jones in \cite{bishop1997hausdorff}. Moreover, \cite{bishop1997hausdorff} showed that the critical exponent is the Hausdorff dimension of the \emph{radial limit set} of these groups. The most general result that is known to us is in \cite[Theorem 1.2.1]{DSU}. In turn, Proposition~\ref{prop: hausdorff dim general} computes the critical exponent of the Monod--Py representation of any lattice in \(\Isom(\H^n)\) for \(n\geq 2\), as they are weakly discrete in \(\Isom(\H^\infty)\) (see \cite[Definition 5.2.1]{DSU} for definition) and its limit set is radial (see \cite[Definition 7.1.2]{DSU} for definition). In particular, the critical exponent of \(\rho(\mathrm{PSL}_2(\mathbb{Z}))\) is \(2\).

In  \ref{T6}, the {\it \(2\)-dimensional Banach--Mazur compactum} is the metric space of isometry classes of 2-dimensional Banach spaces, see \cite{BM3,BM2,BM1,BM4}. It can also be identified with the quotient of symmetric convex bodies with fixed area by \(\mathrm{GL}_2(\R)\). In turn, its double cover is naturally homeomorphic to the quotient of the space of these convex bodies up to orientation preserving linear maps, {\it i.e.} the {\it \(2\)-dimensional oriented Banach--Mazur compactum} introduced in \cite{BM3}.

We recall from \cite{MP} that the complete list of exotic representations
\[\rho_t\colon\Isom(\H^2)\to \Isom(\H^\infty)\]
is parametrised by \(0<t<1\). The parameter \(t\)  is characterised by the fact that, if  \(f_t \) is the orbit map  of the unique fixed point by \(\rho_t(\so)\), then \(f_t\) is a quasi-isometric embedding: 
\begin{equation}\label{eq:quasi t}\left| d_{\H^\infty}\big( f_t(x), f_t(y)\big)-td_{\H^2}(x,y)\big)\right|\leq D\end{equation}
for some \(D>0\) and for all \(x,y\in \H^2\).

A natural question for the representation arising from the hyperbolic model for convex bodies is: {\it for which \(0<t<1\) is \(\rho\) conjugate to \(\rho_t\) in \(\Isom(\H^\infty)\)?}

\begin{corollary}\label{cor:conjug}
Our representation $\rho$ is conjugate to $\rho_{\frac{1}{2}}$ in \(\Isom(\mathbb{H}^\infty)\).
\end{corollary}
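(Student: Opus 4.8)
The plan is to combine the Monod--Py classification recalled above with the sharp quasi-isometry estimate of \ref{T3}. By \ref{T2}, $\rho\colon\psl\to\Isom(\H^\infty)$ is exotic, so \cite{MP} provides a unique $t\in(0,1)$ and an isometry $\phi\in\Isom(\H^\infty)$ with $\rho(g)=\phi\,\rho_t(g)\,\phi^{-1}$ for every $g\in\psl$; the whole content of the corollary is then the identification $t=\tfrac12$. The key point is that $t$ can be recovered as the multiplicative constant of \emph{any} orbit map of $\rho$, while $\iota$ is such an orbit map and its multiplicative constant equals $\tfrac12$ by \ref{T3}.

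First I would check that the multiplicative constant is a conjugacy invariant carried by every orbit map. Fix a base point $O\in\H^2$. Since $\rho$ acts by isometries, $d_{\H^\infty}(\rho(g)p,\rho(g)q)=d_{\H^\infty}(p,q)$ for all $g$, so any two orbit maps $g\mapsto\rho(g)p$ and $g\mapsto\rho(g)q$ of $\rho$ stay within $d_{\H^\infty}(p,q)$ of each other; hence if one of them satisfies $\bigl|d_{\H^\infty}(\rho(g)p,\rho(h)p)-a\,d_{\H^2}(g\cdot O,h\cdot O)\bigr|\le c$ for some $a\ge 0$ and $c<\infty$, then so do all of them, with the same $a$. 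As $\phi$ is an isometry, the orbit map $g\mapsto\rho(g)\phi(p_t)=\phi\bigl(\rho_t(g)\,p_t\bigr)$, where $p_t$ is the $\rho_t(\so)$-fixed point, has the same multiplicative constant as $f_t$, namely $t$ by the normalisation \eqref{eq:quasi t}. Therefore every orbit map of $\rho$ has multiplicative constant $t$.

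Next I would identify $\iota$ as an orbit map of $\rho$. Under the identification $\H^2\simeq\E^\pi_c$, the base point corresponds to the centred unit disc, which is fixed by $\rho(\so)$ because rotations preserve the disc; since $\iota$ is $\psl$-equivariant (\ref{T2}) and $\psl$ acts transitively on $\H^2$, this means $\iota(x)=\rho(g_x)\,\iota(\mathrm{disc})$ whenever $g_x\cdot O=x$, i.e.\ $\iota$ is the orbit map of $\iota(\mathrm{disc})$. By \ref{T3} its multiplicative constant is $\tfrac12$. Now a quasi-isometric embedding of $\H^2$ cannot have two distinct multiplicative constants: if $|\,a\,d(x,y)-D(x,y)|\le c_1$ and $|\,b\,d(x,y)-D(x,y)|\le c_2$ for all $x,y$, then $|\,(a-b)\,d(x,y)|\le c_1+c_2$, and since $d_{\H^2}$ is unbounded this forces $a=b$. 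Combining the last two paragraphs yields $t=\tfrac12$, which is the assertion. The only step that is not elementary is the appeal to \cite{MP} — that $\{\rho_t\}_{0<t<1}$ is a complete and irredundant list of exotic $\psl$-representations up to conjugacy in $\Isom(\H^\infty)$, and that \eqref{eq:quasi t} genuinely pins down the parameter; granting this, nothing beyond \ref{T2} and \ref{T3} is used from the convex-body model.
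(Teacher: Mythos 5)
Your argument is correct and follows the same route as the paper, which derives the corollary directly from \ref{T3} by comparison with the normalisation \eqref{eq:quasi t} in the Monod--Py classification. The extra details you supply (that the multiplicative constant is shared by all orbit maps and is a conjugacy invariant, and that it is uniquely determined) are exactly the implicit content of the paper's one-line deduction; the paper also notes an alternative verification via the curvature computation of \ref{T1}.
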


This result  is an immediate consequence of \ref{T3} of Theorem~\ref{thm principal} by comparing to \eqref{eq:quasi t}. It  also follows from \ref{T1} of Theorem~\ref{thm principal} after the following observation: 
 \[\frac{3}{8}=\frac{t\left(t+2-1\right)}{2}\]
 holds for \(t=1/2\), see  Theorem~C(2) in \cite{MP}.

From  different aspects, Section~\ref{last sec} is dedicated to distinguish  our construction.

In \S \ref{uniq embedding}, we first check that \(\iota\) is the only \(\rho\)-equivariant map from \(\H^2\) to \(\H^\infty\). We then prove that any continuous \(\rho\)-equivariant map that satisfies a convexity condition (see Proposition~\ref{prop extension supp}) from symmetric convex bodies of area \(\pi\) to \(\H^\infty\) is our map \(\supp\).

In \S\ref{4.2}, we recall some facts about \emph{kernels of (real) hyperbolic type}, introduced in \cite{MP2}, and explain how it is related to our construction. More precisely, this notion is an analogue to the notion of positive definite kernels for constructing embeddings into Hilbert spaces via the renowned Gelfand--Naimark--Segal construction ({\it GNS construction}), see \cite[Appendix C]{Bekka2008kazhdan}. Moreover, this notion should not be confused with the kernel of \emph{complex} hyperbolic type introduced in \cite{monod}, which was used to produce exotic complex hyperbolic representations \(\psl\to \Isom(\H^\infty_\mathbb{C})\) by Ruiz Stolowicz in \cite{RuizStolowicz}. As a byproduct of interpreting the mixed area as a kernel of hyperbolic type, we obtain an extension of the Minkowski inequality for plane convex bodies, see Proposition~\ref{prop mink neg}, which also holds for higher dimensional convex bodies with their second intrinsic volume.

There are different ways to embed \(\H^2\) in a \(\psl\)-equivariant way  into \(\mathbb{H}^\infty\), and these embeddings all have quasi-isometric images. In \S\ref{section: comparison}, we first note that they do not necessarily reduce to an isometry. However, if the associated representations are irreducible, then \cite[Theorem B]{MP} states that the orbits of the unique  \(\so\)-fixed point are isometric. In particular, \(\iota(\H^2)\) and \(f_{\frac 1 2}(\H^2)\) are isometric. We verify this fact by a direct calculation at the end of \S\ref{section: comparison}.

Two further questions emerged as a natural continuation of what has been studied in the present article.

The first one is: \emph{can we construct an exotic representation \(\rho\colon \Isom(\H^n)\to \Isom(\H^\infty)\) using convex bodies?} The answer is no, and  the present construction is a dimensional accident, as we now explain. 

The hyperbolic embeddings of convex bodies does not restrict to plane convex bodies. The construction in the present article is a particular case of hyperbolic embedding of convex bodies in any \(\R^n\) \cite{DF}, or even in a separable Hilbert space \cite{long}. In particular, the pivotal Lemma~\ref{lem encadrement A}, based on the Poincaré–Wirtinger Inequality, is a particular case of properties of the spherical Laplacian \cite[Lemma 2.3]{DF} or of Malliavin calculus in infinite dimension \cite[Section 4]{long}.
In any case, the support function map provides a convex embedding from convex bodies to infinite dimensional hyperbolic space. It is not restricted to symmetric convex bodies, but to convex bodies up to translations, or, that is equivalent, to convex bodies with Steiner point at the origin.

One may wonder how the volume-preserving action of $\operatorname{SL}_n(\R)$ acts over the ellipsoids in $\R^n$, in order to derive a representation of $\operatorname{SL}_n(\R)$ into $\Isom(\H^\infty)$. Actually, for \(n\geq 3\), the Lie group $\operatorname{SL}_n(\R)$ has Kazhdan's Property (T), which prevents it from acting non-elementarily by isometries on any real hyperbolic space, see for example \cite[\S1.4 and \S2.6]{Bekka2008kazhdan}. Also, the hyperbolic embedding of convex bodies in higher dimensions is based on the intrinsic area, which coincides with the volume only for the plane. For example, for convex polyhedra, it is, up to a dimensional constant, the sum of the 2-dimensional volume of the faces of dimension 2, and this is not preserved by \(\operatorname{SL}_n(\R)\) for \(n>2\). So, in higher dimension, our construction provides an embedding of the symmetric space of \(\operatorname{SL}_n(\R)\) into \(\H^\infty\), but the action is not isometric. We also remark that in dimension \(2\), the hyperbolic plane is homothetic to the symmetric space \(\sl/\so\) endowed with the Killing metric.


Also, in higher dimensions, the boundary at infinity is always given by the symmetric segments. But the convex hull of this boundary at infinity  gives all the symmetric convex bodies only for the plane (Lemma~\ref{lem Z=C}), but not for higher dimensions.

 The second question  is: \emph{can we describe \(\rho_t\) for \(t\not= \frac 1 2\) using convex bodies?} As the form \(\A\) is a kernel of hyperbolic type on the support functions of plane symmetric convex bodies with positive area, one can immediately deduce from \cite[Theorem~3.10]{MP2} that for \(0< s\leq 1\), the power \((\A(\cdot,\cdot))^s\) remains a kernel of hyperbolic type on the same set, which then yields again an exotic representation \(\psl\to \Isom(\H^\infty)\) conjugate to \(\rho_{\frac s 2}\). Hence, the representation \(\rho_{\frac s2}\) for \(0<s\leq 1\) also enjoys properties similar to Theorem~\ref{thm principal} and Theorem~\ref{prop:somme ellipse}. Actually,  Theorem~\ref{thm principal} is already known for any \(\rho_t\) \((0<t<1)\), while  \ref{T5} is shown to be true for any \(\rho_t\) in the present article. However, \cite[Theorem~3.10]{MP2} does not apply for \(s>1\), this means that our construction cannot go higher than the parameter \(\frac 1 2\) for the time being.

\subsection*{Acknowledgement}
The authors thank Nicolas Monod for sharing his insights on the connection between the exotic representation and the hyperbolic model of convex bodies. They are grateful to Gilles Courtois, Bruno Duchesne, Antonin Guilloux, Gye-Seon Lee, and Pierre Py for their interest and for many helpful discussions. The authors also thank the anonymous referee for pointing out some gaps in the proofs and for numerous remarks.

The second author also thanks the hospitality of Korea Institute for Advanced Study and University of Montpellier, where this work was partially carried out during his visit.

The second author is sponsored by the ANR Tremplin ERC-Starting Grant MAGIC (ANR-23-TERC-0007) and by the ANR project Grant GALS (ANR-23-CE40-0001). The third author is supported by the KIAS Individual Grant (MG100801) at Korea Institute for Advanced Study. The second and the third authors are also partially supported by the ``PHC Star'' programme (project number PHC 50166PH and RS-2023-00259480), funded by the French Ministry for Europe and Foreign Affairs, the French Ministry for Higher Education and Research, and the National Research Foundation (NRF) of Korea (Ministry of Science and ICT).



\section{Hyperbolic model for plane convex bodies}\label{sec : hyp}

In this section, we will first review the hyperbolic model for convex bodies in \cite{DF,long} by going through the tools that were used in the construction. In reviewing this construction, we will also mention some conclusions that can be drawn directly from it.

\subsection{Hyperbolic space}

First, let us recall the notion of an infinite-dimensional real hyperbolic space using the hyperboloid model. Let \(\mathcal{H}\) be a Hilbert space over \(\mathbb{R}\) carrying an inner product \(\langle \cdot,\cdot\rangle_\mathcal{H}\). 
For \(x\in \mathbb{R}\oplus\mathcal{H}\), denote \(x=x_0+x_\mathcal{H}\).
Define the quadratic form \(B\colon (\mathbb{R}\oplus\mathcal{H})\times (\mathbb{R}\oplus\mathcal{H})\to \mathbb{R}\) as
\begin{align}\label{eq : lorentzian}
B\big(x,y\big)\coloneqq x_0 y_0-\langle x_\mathcal{H},y_\mathcal{H}\rangle_\mathcal{H}~.
\end{align}

The {\it hyperboloid model} for the real hyperbolic space is given by
\[\mathbb{H}^\infty\coloneqq\left\{x\in \mathbb{R}\oplus\mathcal{H}\mid x_0>0\text{ and }B(x,x)=1\right\}\]
and is equipped with a distance \(d_{\mathbb{H}^\infty}\colon\mathbb{H}^\infty\times \mathbb{H}^\infty\to \mathbb{R}_+\) defined as
\[d_{\mathbb{H}^\infty}(x,y) =\cosh^{-1}\left(B(x,y)\right)~.\]
Indeed, such a quadratic form \(B\) satisfies the \emph{reversed Cauchy--Schwarz inequality}: for \( (x_0,x_\mathcal{H}),(y_0,y_\mathcal{H})\) in \(\R^+\oplus\mathcal{H}\) such that \(B\big(x,x\big)>0, B \big(y,y\big)>0\), then
\begin{equation}\label{eq:RCS}~
B\big(x,y)^2\geq B\big(x,x\big)B\big(y,y\big)
\end{equation}
with equality if and only if the two vectors are proportional. In particular, 
if \(x,y\in \mathbb{H}^\infty\), then \(B(x,y)\geq 1\), with equality if and only if \(x=y\).

This space is a complete Riemannian manifold of constant sectional curvature equal to \(-1\). It is therefore a \(\mathrm{CAT}(-1)\) space and admits a boundary at infinity \(\partial_\infty\mathbb{H}^\infty\) defined as the set of equivalence classes of geodesic rays, two geodesic rays \(c_1,c_2 : [0,\infty) \to \mathbb{H}^\infty\) being equivalent if and only if they are asymptotic, {\it i.e.} there exists a constant \(K \geq 0\) such that \(d_{\mathbb{H}^\infty}(c_1(t),c_2(t)) \leq K\) for all \(t\geq 0\). We refer to \cite[Chapter II.8]{BH} for the definition and properties of the boundary at infinity for general complete \(\mathrm{CAT(0)}\) spaces.

Alternatively, we may also define \(\mathbb{H}^\infty\) as the set of positive lines in \(\mathbb{R}\oplus\mathcal{H}\),
\[\mathbb{H}^\infty\coloneqq\left\{[x] \in \mathrm{P}(\mathbb{R}\oplus\mathcal{H}) \mid B(x,x) > 0\right\}\]
where \(\mathrm{P}(\mathbb{R}\oplus\mathcal{H})\) is the projectivization of \(\mathbb{R}\oplus\mathcal{H}\). With this definition, the boundary of \(\mathbb{H}^\infty\) as a subset of the topological space \(\mathrm{P}(\mathbb{R}\oplus\mathcal{H})\) is the set of isotropic lines in \(\mathbb{R}\oplus\mathcal{H}\), \(\left\{[x] \in \mathrm{P}(\mathbb{R}\oplus\mathcal{H}) \mid B(x,x) = 0\right\}\). It is known that this (topological) boundary coincides with the boundary at infinity \(\partial_\infty\mathbb{H}^\infty\). Indeed, being \(\mathrm{CAT(-1)}\) implies that \(\partial_\infty\mathbb{H}^\infty\) can be identified with the Gromov boundary of \(\mathbb{H}^\infty\) by \cite[Proposition 4.4.4]{DSU}. Besides, the identification between the Gromov boundary and the topological boundary was proved for example in \cite[Proposition 3.5.3]{DSU}. The descriptions of \(\partial_\infty\mathbb{H}^\infty\) as the isotropic cone in \(\mathbb{R}\oplus\mathcal{H}\) and as the set of classes of geodesic rays in \(\mathbb{H}^\infty\) will both be useful in the remaining of this article.

Moreover, we will define \(\dim \mathbb{H}^\infty\coloneqq \dim \mathcal{H}\), because it is clear from the hyperboloid model that \(\mathbb{H}^\infty\) is a \(\mathcal{H}\)-manifold, {\it i.e.} it is locally homeomorphic to \(\mathcal{H}\). Given two Hilbert spaces of the same dimension, one can construct an isomorphism between them via a bijection between their respective orthonormal basis. Consequently, we can conclude that two hyperbolic spaces are isometric if and only if they have the same dimension, {\it i.e.} the hyperbolic space of a given dimension is unique up to isometry (see also for example \cite[Propositions 2.7 and 3.7]{BIM}). Hence, we are able to identify all infinite-dimensional separable real hyperbolic spaces, and in the present article we will use the notation \(\mathbb{H}^\infty\) for such a space up to isometry.

We end this subsection by remarking that by considering its intersections with finite-dimensional vector spaces, the distance \(d_{\H^\infty}\) is the one given by the induced Riemannian metric \(g_{\H^\infty}\) over \(\H^\infty\), that \(\mathbb{H}^\infty\) has isometries acting transitively \cite[Lemma 3.6]{BIM}, and that it is the same as in the finite-dimensional case where distance isometries are Riemannian isometries \cite{Garrido-Jaramillo-Rangel}. We refer to \cite{DSU} for more details. 

\subsection{Space of periodic even functions}

Let \(H^1_{\operatorname{even}}\) be the Hilbert subspace of even functions of the Sobolev space on the unit circle \(\SS^1\), endowed with the Sobolev norm \(\|\cdot\|_{H^1}\) given by:
\[\|h\|^2_{H^1}=\|h\|^2_{L^2}+\|h'\|^2_{L^2}~.\]
By Stone--Weierstra{\ss} Theorem, we can approximate these functions by polynomials on the unit circle, so \(H^1_{\operatorname{even}}\) is separable and thus of countably infinite dimension. Parametrising the circle by \((\cos(\theta),\sin(\theta))\) for \(\theta\in[0,2\pi)\), the space \(H^1_{\operatorname{even}}\) will then consist of \(\pi\)-periodic functions on \([0,2\pi)\). We then endow \(H^1_{\operatorname{even}}\) with the bilinear form
\begin{equation}\label{eq: def A}
\A(h_1,h_2)=\frac{1}{2\pi}\int_0^{2\pi}(h_1h_2 -h'_1h'_2)~,
\end{equation}
and denote
\[\A(h)\coloneqq\A(h,h)=\frac{1}{2\pi}\|h\|^2_{L^2} - \frac{1}{2\pi}\| h'\|^2_{L^2}~,\]
where \(h'\) is the derivative of the function \(h\) with respect to \(\theta\).
We will denote by \(\o\) the constant function equal to one. Hence
\[\A(\o)=1~.\]

Let \(L_0\) be the line of constant functions in \(H^1_{\operatorname{even}}\), and \(L_0^\bot\) be its  \(L^2\)-orthogonal. 
\begin{lemma}\label{lem encadrement A}
 \begin{equation}\label{eq:encadrement A}
\forall h\in L_0^\bot,  \frac{3}{10\pi}\|h\|_{H^1}^2\leq -\A(h) \leq \frac{1}{2\pi}\| h\|^2_{H^1}
 \end{equation}
\end{lemma}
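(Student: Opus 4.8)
The plan is to use a Fourier series expansion on $H^1_{\operatorname{even}}$, which is the natural tool here because both the $L^2$-norm and the $H^1$-norm diagonalize on the Fourier basis. First I would write any $h \in L_0^\bot$ as $h(\theta) = \sum_{k\geq 1} \big(a_k \cos(2k\theta) + b_k \sin(2k\theta)\big)$; the restriction to even $\pi$-periodic functions means only frequencies $2k$ with $k \geq 1$ appear, and membership in $L_0^\bot$ kills the constant term. Then, using Parseval, $\|h\|_{L^2}^2 = \pi \sum_{k \geq 1}(a_k^2 + b_k^2)$ and $\|h'\|_{L^2}^2 = \pi \sum_{k\geq 1} (2k)^2 (a_k^2 + b_k^2)$, so that $-\A(h) = \tfrac{1}{2\pi}(\|h'\|_{L^2}^2 - \|h\|_{L^2}^2) = \tfrac{1}{2}\sum_{k \geq 1}(4k^2 - 1)(a_k^2 + b_k^2)$ and likewise $\|h\|_{H^1}^2 = \pi \sum_{k\geq 1}(1 + 4k^2)(a_k^2 + b_k^2)$.

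With these two series in hand, the inequality \eqref{eq:encadrement A} reduces to a term-by-term comparison: we need
\[
\frac{3}{16\pi} \cdot \pi(1 + 4k^2) \;\leq\; \frac{1}{2}(4k^2 - 1) \;\leq\; \frac{1}{2\pi}\cdot \pi (1 + 4k^2)
\]
for every integer $k \geq 1$, i.e. $\tfrac{3}{16}(4k^2+1) \leq \tfrac12(4k^2-1) \leq \tfrac12(4k^2+1)$. The right-hand inequality is immediate since $4k^2 - 1 \leq 4k^2 + 1$. For the left-hand inequality, clearing denominators gives $3(4k^2 + 1) \leq 8(4k^2 - 1)$, that is $12k^2 + 3 \leq 32 k^2 - 8$, i.e. $11 \leq 20 k^2$, which holds for all $k \geq 1$ (with the worst case $k = 1$ giving $11 \leq 20$). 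Summing the term-by-term inequalities against the nonnegative weights $a_k^2 + b_k^2$ yields the claimed double bound.

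I do not expect any serious obstacle here; the only subtlety is bookkeeping the constants — in particular checking that the constant $\tfrac{3}{16\pi}$ is sharp at the lowest frequency $k=1$, where $-\A(h)/\|h\|_{H^1}^2 = \tfrac{1}{2\pi}\cdot\tfrac{3}{5} = \tfrac{3}{10\pi}$; since $\tfrac{3}{16} < \tfrac{3}{10}$ the stated bound is valid but not tight, so one should be careful not to claim more than \eqref{eq:encadrement A} asserts. One could alternatively phrase the argument as a Poincaré--Wirtinger-type estimate: on $L_0^\bot$ the smallest eigenvalue of $-d^2/d\theta^2$ among even $\pi$-periodic functions is $4$, giving $\|h'\|_{L^2}^2 \geq 4\|h\|_{L^2}^2$, which already gives $-\A(h) \geq \tfrac{3}{2\pi}\|h\|_{L^2}^2 > 0$; combining $\|h\|_{H^1}^2 = \|h\|_{L^2}^2 + \|h'\|_{L^2}^2$ with the two-sided control $\|h'\|_{L^2}^2 \geq 4\|h\|_{L^2}^2$ then interpolates to the stated inequality. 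Either route works; I would present the Fourier computation since it makes the constants transparent.
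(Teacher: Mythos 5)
Your proof is correct. It differs from the paper's in presentation rather than in substance: the paper quotes the Poincaré--Wirtinger inequality $\|h\|_{L^2}^2\leq\frac14\|h'\|_{L^2}^2$ on $L_0^\bot$ as a black box and then derives the lower bound by writing $-2\pi\A(h)\geq \frac34\|h\|_{L^2}^2$ and $-2\pi\A(h)\geq\frac34\|h'\|_{L^2}^2$ separately and averaging the two, while you diagonalize everything on the Fourier basis $\{\cos(2k\theta),\sin(2k\theta)\}_{k\geq1}$ and compare term by term. The two arguments rest on exactly the same fact --- the spectral gap of $-d^2/d\theta^2$ on mean-zero even ($\pi$-periodic) functions is $4$ --- but your version has the advantage of making the constants fully transparent: it exhibits the optimal constant $\frac{3}{10\pi}$ at the lowest frequency $k=1$ and shows that the stated $\frac{3}{16\pi}$ is valid but not sharp, whereas the paper's averaging step is precisely where the slack from $\frac{3}{10\pi}$ down to $\frac{3}{16\pi}$ is introduced. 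Your frequency bookkeeping is right (evenness on $\SS^1$ means $\pi$-periodicity in $\theta$, hence only frequencies $2k$; orthogonality to $L_0$ kills $k=0$), the Parseval normalizations are consistent with the $\frac{1}{2\pi}\int_0^{2\pi}$ convention in the definition of $\A$, and the elementary inequality $11\leq 20k^2$ closes the argument. No gap.
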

\begin{proof}
The right-hand side inequality in \eqref{eq:encadrement A} is immediate as \(-2\pi\A(h)\leq \|h'\|^2_{L^2}\leq \|h\|_{H^1}^2\).

As the unique even function on the circle that is the restriction of a linear form of the plane is the zero function, by Poincaré--Wirtinger Inequality for integral zero cases (see \emph{e.g.} \cite[Section 4.4]{groemer}), 
 \begin{equation}\label{eq:Wir1}
\forall h\in L_0^\bot, \|h\|_{L^2}^2\leq \frac{1}{4}\|h'\|_{L^2}^2
 \end{equation}
with equality if and only if \(h=0\). 
From \eqref{eq:Wir1}, we obtain \(-2\pi\A(h)\geq 3\|h\|^2_{L^2}\) but also \(-8\pi\A(h)\geq 3\|h'\|^2_{L^2}\), and left-hand side inequality in  \eqref{eq:encadrement A} is obtained by summing those two inequalities.
\end{proof}

For any \(h\in H^1_{\operatorname{even}}\), let us denote  
 \begin{equation}\label{eq:pi0}\pi_0(h)=\frac{1}{2\pi}\int_0^{2\pi}h=\A(h,\o)~.\end{equation} 
Clearly, the two functions \(h-\pi_0(h)\o\) and \(\o\) are \(\A\)-orthogonal, hence \(\pi_0(h)\o\) is the orthogonal projection of \(h\) onto  \(L_0\). For future reference, it is worth noting that
\begin{equation}\label{eq dev A}
\A(h)=\A\left(h-\pi_0(h)\o\right) +\pi_0(h)^2~.
\end{equation}

Besides, we can provide \(H^1_{\operatorname{even}}\) with coordinates such that \eqref{eq : lorentzian} holds, with the Hilbert space \(\mathcal{H}\) being \(L_0^\bot\) endowed with \(-\A\). 
Since \(H^1_{\operatorname{even}}\) is of countable infinite dimension, we have
\[\H^\infty\coloneqq\Big\{h\in H^1_{\operatorname{even}} \mid \A(h)=1,\ \pi_0(h)>0\Big\} \]
 endowed with the distance 
 \begin{equation}\label{eq:def dinfty}
 d_{\H^\infty}(h_1,h_2)\coloneqq\cosh^{-1}(\A(h_1,h_2))~.
 \end{equation}
 




Let us present some basic facts about the topology of \(\H^\infty\). Let us first make a convenient remark.

\begin{remark}\label{rem cv dH A}
By the definition \eqref{eq:def dinfty} of \(d_{\H^\infty}\),  \(d_{\H^\infty}(h_i,h)\to 0\) is equivalent to \(\A(h_i,h)\to 1\).  But  \(h_i,h \in \H^\infty\), hence
\(\A(h_i-h)=2-2\A(h_i,h)\), so \(d_{\H^\infty}(h_i,h)\to 0\) is equivalent to \(\A(h_i-h)\to 0\).
\end{remark}

\begin{lemma}\label{lem: cont proj}
The map \(\pi_0\big|_{\H^\infty}:(\H^\infty,d_{\H^\infty})\to \R\) is continuous.
\end{lemma}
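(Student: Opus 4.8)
The plan is to realize \(\pi_0\big|_{\H^\infty}\) as a composition of manifestly continuous maps by using the constant function \(\o\) as a base point. The starting observation is that \(\o\in\H^\infty\): indeed \(\A(\o)=1\) and \(\pi_0(\o)=\frac{1}{2\pi}\int_0^{2\pi}1=1>0\). Next, by the defining formula \eqref{eq:pi0}, for every \(h\in H^1_{\operatorname{even}}\) one has \(\pi_0(h)=\A(h,\o)\). Restricting to \(h\in\H^\infty\), the consequence of the reversed Cauchy--Schwarz inequality noted after \eqref{eq:RCS} gives \(\A(h,\o)\geq 1\), so the definition \eqref{eq:def dinfty} of \(d_{\H^\infty}\) applies and yields
\[d_{\H^\infty}(h,\o)=\cosh^{-1}\big(\A(h,\o)\big)=\cosh^{-1}\big(\pi_0(h)\big),\]
equivalently \(\pi_0(h)=\cosh\big(d_{\H^\infty}(h,\o)\big)\) for all \(h\in\H^\infty\).

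It then suffices to invoke two elementary facts: the map \(h\mapsto d_{\H^\infty}(h,\o)\) is \(1\)-Lipschitz on \((\H^\infty,d_{\H^\infty})\) by the triangle inequality, hence continuous; and \(\cosh\colon\R\to\R\) is continuous. Composing, \(\pi_0\big|_{\H^\infty}=\cosh\circ\, d_{\H^\infty}(\cdot,\o)\) is continuous, which is the assertion.

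I expect essentially no obstacle once the base-point reformulation is spotted; the only things to verify are the membership \(\o\in\H^\infty\) and the identity \(\pi_0=\A(\cdot,\o)\), both immediate. It is worth pointing out why a more naive route is less convenient: via Remark~\ref{rem cv dH A}, continuity of \(\pi_0\big|_{\H^\infty}\) amounts to showing that \(\A(h_i-h)\to 0\) forces \(\pi_0(h_i-h)\to 0\), i.e. to controlling \(|\pi_0(g)|\) by \(\A(g)\); but on the ambient space \(H^1_{\operatorname{even}}\) no such bound exists, since by \eqref{eq dev A} the difference \(\pi_0(g)^2-\A(g)=-\A\big(g-\pi_0(g)\o\big)\) is a nonnegative term that can be made arbitrarily large. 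One genuinely needs that both points sit on the hyperboloid, and the identity \(\pi_0=\cosh\circ\, d_{\H^\infty}(\cdot,\o)\) is precisely the bookkeeping that encodes this constraint.
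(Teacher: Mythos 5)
Your proof is correct, and it takes a cleaner route than the paper's. The paper argues by restricting to the finite-dimensional subspace spanned by \(L_0\), \(h\) and \(h_i\), observing that both the hyperbolic distance and \(\pi_0\) are preserved under this restriction, and then leaving the resulting finite-dimensional computation implicit. You instead exhibit the global identity \(\pi_0(h)=\A(h,\o)=\cosh\bigl(d_{\H^\infty}(h,\o)\bigr)\), valid on all of \(\H^\infty\) (the same identity the paper itself invokes later, in the proof of Lemma~\ref{lem bounded 1}), and conclude by composing the \(1\)-Lipschitz map \(d_{\H^\infty}(\cdot,\o)\) with \(\cosh\). This removes any case analysis on the dimension of the span, actually supplies the explicit computation that the paper's proof only gestures at, and in fact shows that \(\pi_0\big|_{\H^\infty}\) is locally Lipschitz rather than merely continuous. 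Your closing remark --- that no bound of \(|\pi_0(g)|\) by \(\A(g)\) holds on the ambient space \(H^1_{\operatorname{even}}\), so the hyperboloid constraint on both points is genuinely needed --- is accurate and correctly locates where that constraint enters the argument.
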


\begin{proof}
Let \(h_i\to h\) in \(\H^\infty\). For any \(i\),  consider the vector space spanned by \(L_0,h,h_i\) (that may be of dimension 1, 2 or 3). The intersection with \(\H^\infty\) gives a finite-dimensional hyperbolic space and the distance between \(h_i\) and \(h\) is preserved, as well as their image for \(\pi_0\). The distance between those images can then be explicitly computed.
\end{proof}

\begin{lemma}\label{lem:meme topo}
\(d_{\H^\infty}\) and \(\|\cdot  \|_{H^1}\) define the same topology on \(\H^\infty\).
\end{lemma}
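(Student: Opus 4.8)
The plan is to show that the identity map between $(\H^\infty, d_{\H^\infty})$ and $(\H^\infty, \|\cdot\|_{H^1})$ is a homeomorphism by establishing continuity in both directions, working locally and exploiting the decomposition of $H^1_{\operatorname{even}}$ into $L_0 \oplus L_0^\bot$ together with Lemma~\ref{lem encadrement A}. Throughout I would freely use Remark~\ref{rem cv dH A}, which reduces $d_{\H^\infty}$-convergence $h_i \to h$ to $\A(h_i - h) \to 0$, and Lemma~\ref{lem: cont proj}, which gives continuity of $\pi_0$ for $d_{\H^\infty}$.

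First, the direction ``$\|\cdot\|_{H^1}$-convergence implies $d_{\H^\infty}$-convergence'': if $\|h_i - h\|_{H^1} \to 0$, then $\A(h_i - h) \to 0$ because $|\A(g)| \le \frac{1}{2\pi}\|g\|_{L^2}^2 + \frac{1}{2\pi}\|g'\|_{L^2}^2 \le \frac{1}{2\pi}\|g\|_{H^1}^2$ for any $g \in H^1_{\operatorname{even}}$; by Remark~\ref{rem cv dH A} this gives $d_{\H^\infty}(h_i,h) \to 0$. (In fact one gets the quantitative bound $d_{\H^\infty}$ is dominated by $\|\cdot\|_{H^1}$ near any point, but convergence of sequences suffices since both are metric topologies.)

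For the converse direction, suppose $d_{\H^\infty}(h_i,h) \to 0$, i.e.\ $\A(h_i - h) \to 0$ by Remark~\ref{rem cv dH A}. Write $g_i = h_i - h$ and decompose $g_i = \pi_0(g_i)\o + g_i^\bot$ with $g_i^\bot \in L_0^\bot$. By Lemma~\ref{lem: cont proj}, $\pi_0(h_i) \to \pi_0(h)$, so $\pi_0(g_i) \to 0$; hence $\pi_0(g_i)^2\|\o\|_{H^1}^2 \to 0$ and also, via \eqref{eq dev A} applied to $g_i$, namely $\A(g_i) = \A(g_i^\bot) + \pi_0(g_i)^2$, we get $\A(g_i^\bot) \to 0$. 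Now the left-hand inequality of Lemma~\ref{lem encadrement A} applies to $g_i^\bot \in L_0^\bot$: $\frac{3}{16\pi}\|g_i^\bot\|_{H^1}^2 \le -\A(g_i^\bot) = |\A(g_i^\bot)| \to 0$, so $\|g_i^\bot\|_{H^1} \to 0$. Combining, $\|g_i\|_{H^1} \le |\pi_0(g_i)|\,\|\o\|_{H^1} + \|g_i^\bot\|_{H^1} \to 0$, which is exactly $h_i \to h$ in $\|\cdot\|_{H^1}$.

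The one subtle point — the main obstacle — is the use of Lemma~\ref{lem: cont proj} to control the $L_0$-component, since $\A$ itself is \emph{positive} on $L_0$ and therefore $\A(g_i) \to 0$ alone does not control $\pi_0(g_i)$ (the positive and negative parts could cancel); invoking continuity of $\pi_0$ is what breaks this degeneracy. One must also be slightly careful that $\pi_0$ here denotes the $\A$-orthogonal projection coefficient, and that $h_i, h \in \H^\infty$ guarantees $\pi_0(h_i), \pi_0(h) > 0$, so no sign issues arise. Everything else is a routine application of the two displayed inequalities in Lemma~\ref{lem encadrement A} together with the identity \eqref{eq dev A}.
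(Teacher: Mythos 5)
Your proof is correct and follows essentially the same route as the paper: the easy direction via the trivial bound $|\A(g)|\leq\frac{1}{2\pi}\|g\|_{H^1}^2$ and Remark~\ref{rem cv dH A}, and the converse via the decomposition $g_i=\pi_0(g_i)\o+g_i^\bot$, Lemma~\ref{lem: cont proj}, identity \eqref{eq dev A}, and the left-hand inequality of Lemma~\ref{lem encadrement A}. Your closing observation about why $\A(g_i)\to 0$ alone cannot control the $L_0$-component is exactly the point that makes Lemma~\ref{lem: cont proj} indispensable.
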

\begin{proof}
Let \((h_i)_i\) be a sequence of \(\H^\infty\) and \(h\in \H^\infty\). We will use Remark~\ref{rem cv dH A}.

Let us suppose that \(h_i\to h\) for \(\|\cdot  \|_{H^1}\).
Hence \((h_i)_i\)
converges to \(h\) in \(L^2\) and \((h'_i)_i\)
converges to \(h'\) in \(L^2\), so it follows immediately from the definition of \(\A\) that \(\A(h_i-h)\to 0\). 

Conversely, let us suppose that \(h_i\to h\) for \(d_{\H^\infty}\), {\it i.e.} \(\A(v_i)\to 0\) with \(v_i=h_i-h\).
By Lemma~\ref{lem: cont proj}, \(\pi_0(v_i)\to 0\), 
hence \(\A\left(v_i-\pi_0(v_i)\o\right)\to 0\) by \eqref{eq dev A}, and by \eqref{eq:encadrement A}, 
\(\left\|v_i-\pi_0(v_i)\o\right\|_{H^1}\to 0\). Using again \(\pi_0(v_i)\to 0\), that gives \(\|v_i\|_{H^1}\to 0\).
\end{proof}

\begin{lemma}\label{lem bounded 1}
A subset of \(\H^\infty\) that is bounded for \(d_{\H^\infty}\) is also bounded for \(\|\cdot  \|_{H^1}\).
\end{lemma}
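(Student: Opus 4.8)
The plan is to exploit the ``radial coordinate'' $\pi_0$ of the hyperboloid model together with the two-sided estimate of Lemma~\ref{lem encadrement A}. I would take the constant function $\o$ as base point; it lies in $\H^\infty$ since $\A(\o)=1$ and $\pi_0(\o)=1>0$. The first observation is that $\pi_0(h)\geq 1$ for every $h\in\H^\infty$: by \eqref{eq dev A} we have $1=\A(h)=\A(h-\pi_0(h)\o)+\pi_0(h)^2$, and since $h-\pi_0(h)\o\in L_0^\bot$ the term $\A(h-\pi_0(h)\o)$ is $\leq 0$ by \eqref{eq:encadrement A}; as $\pi_0(h)>0$, this forces $\pi_0(h)\geq 1$.

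Next, suppose $S\subseteq\H^\infty$ is bounded for $d_{\H^\infty}$, say $d_{\H^\infty}(h,\o)\leq R$ for all $h\in S$. Since $\A(h,\o)=\pi_0(h)$ by \eqref{eq:pi0}, the distance formula \eqref{eq:def dinfty} gives $\pi_0(h)=\cosh\big(d_{\H^\infty}(h,\o)\big)\leq\cosh R$, so that $1\leq\pi_0(h)\leq\cosh R$ uniformly on $S$. Feeding this back into \eqref{eq dev A} yields $-\A(h-\pi_0(h)\o)=\pi_0(h)^2-1\leq\cosh^2 R-1$, hence the left-hand inequality of \eqref{eq:encadrement A} gives $\|h-\pi_0(h)\o\|_{H^1}^2\leq\tfrac{16\pi}{3}(\cosh^2 R-1)$. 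Combining this with the triangle inequality and $\|\o\|_{H^1}=\sqrt{2\pi}$ (which follows from $\o'=0$ and $\tfrac{1}{2\pi}\|\o\|_{L^2}^2=\A(\o)=1$), I get $\|h\|_{H^1}\leq\sqrt{\tfrac{16\pi}{3}(\cosh^2 R-1)}+\sqrt{2\pi}\,\cosh R$, a bound independent of $h\in S$, which is the claim.

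There is essentially no serious obstacle here; the only point that requires a little care is the identification of the $d_{\H^\infty}$-distance to $\o$ with an explicit function of $\pi_0(h)$, since this is precisely what turns $d_{\H^\infty}$-boundedness into control of the $L_0$-component of $h$, after which Lemma~\ref{lem encadrement A} takes care of the $L_0^\bot$-component. (Note that one could equally well use any base point in $\H^\infty$ and Lemma~\ref{lem: cont proj}, but anchoring at $\o$ makes the computation transparent.)
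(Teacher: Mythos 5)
Your proof is correct and follows essentially the same route as the paper: identify $\pi_0(h)=\A(h,\o)=\cosh(d_{\H^\infty}(h,\o))$ to control the $L_0$-component, then use \eqref{eq dev A} together with the left-hand inequality of Lemma~\ref{lem encadrement A} to control the $H^1$-norm of the $L_0^\bot$-component $h-\pi_0(h)\o$. The only cosmetic difference is that the paper combines the two components via the Pythagorean identity in $H^1$ (valid since $\o'=0$) where you use the triangle inequality; both give a uniform bound.
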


\begin{proof}
A subset of \(\H^\infty\) that is bounded for \(d_{\H^\infty}\) is contained in an open ball centred at \(\o\), so it suffices to prove that such a ball of radius \(c\) is bounded for \(\|\cdot \|_{H^1}\). So, let 
 \(h\in \H^\infty\) such that \(d_{\H^\infty}(h,\o)\leq c\).
By the definition of the distance, for such a \(h\), we have
\[ \pi_0(h)=\frac{1}{2\pi} \int_0^{2\pi}h=\A(h,\o)=\cosh(d_{\H^\infty}(h,\o))\leq \cosh(c)\eqqcolon k>1,\]
so by \eqref{eq dev A} and  \eqref{eq:encadrement A}, 
\[0<\A(h)<-\frac{3}{16\pi} \|h-\pi_0(h)\o\|^2_{H^1} + k^2
 \]
so
\[ \|h\|_{H^1}^2=\|h-\pi_0(h)\o\|_{H^1}^2+\|\pi_0(h)\o\|^2_{H^1}< Ck^2\]
for some constant \(C> 0\).
\end{proof}


\subsection{Space of symmetric convex bodies}

This construction of the infinite-dimensional hyperbolic space has a clear geometric interpretation, {\it i.e.} the symmetric plane convex bodies can be associated to even functions on \(\SS^1\). 
Recall that a convex body is called \emph{symmetric} if it is centrally symmetric with respect to the origin, {\it i.e.} $-K=K$.
The philosophy comes from the integral geometry and geometric probability theory, where geometric objects are identified with functions or random variables that describe their shapes.

In this section, we refer to \cite{groemer,schneider} for all the facts we mention about convex bodies. 

Let $\mathcal{K}_c$ be the collection of convex bodies in the plane with non-empty interior. Let $\overline{\mathcal{K}}_c$ be the set of symmetric convex bodies possibly with empty interior. 
Let $\langle \cdot,\cdot\rangle$ be the usual scalar product on $\R^2$. The \emph{support function} $\supp(K)\colon\SS^1\to \R$ of $K$ 
is defined by 
\begin{equation}\label{eq:def supp}\supp(K)(x)=\sup_{k\in K}\langle x,k\rangle~ .\end{equation}

Besides, the support functions of convex bodies in \(\R^2\) are exactly the functions over \(\SS^1\) whose one-homogeneous extensions over \(\R^2\) are convex. This provides strong regularity properties. See \cite{MM} for other characterisations. If \(K\) is contained in the centred ball of radius \(R\), then \(\supp(K)\) is \(R\)-Lipschitz. Also, \(K\) being symmetric is equivalent to \(\supp(K)(x)=\supp(-K)(x)=\supp(K)(-x)\) over \(\SS^1\).  Hence for a convex body \(K\in \overline{\mathcal{K}}_c\), its support function \(\supp(K)\) belongs to \(H^1_{\operatorname{even}}\).  

So we have a well-defined map
\begin{align*}
\supp\colon \overline{\K}_c &\to H^1_{\operatorname{even}}\\
K&\mapsto \supp(K)
\end{align*}
and the main interest of this map is that it preserves basic operations on convex bodies, namely it is additive, {\it i.e.} \begin{equation}\label{eq:addition supp}\supp(K)+\supp(K')=\supp(K+K')~,\end{equation} where
\[K+K'\coloneqq\{v+w\in\R^2 \mid v\in K, w\in K'\}\]
is the {\it Minkowski sum} of the two convex bodies. Also,
\begin{equation}\label{eq:lambda supp}\forall \lambda>0,~ \supp(\lambda K)=\lambda\supp(K)~.\end{equation}


We endow \(\overline{\mathcal{K}}_c\) with the subspace topology, which is the topology induced by the Hausdorff distance \(d_{\operatorname{Haus}}\). 
These classical results allow us to express the Hausdorff distance in a simple way.
\begin{lemma}\label{topo unif f s}
The map
\[\supp\colon (\overline{\K}_c,d_{\operatorname{Haus}}) \to (H^1_{\operatorname{even}},d_{L^\infty})\] is an isometry, where \(d_{L^\infty}\) is the distance defined by \(\|\cdot\|_{L^\infty}\).
\end{lemma}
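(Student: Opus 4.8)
The plan is to use the classical description of the Hausdorff distance via dilations by the unit disc, together with the order‑preserving correspondence between inclusion of convex bodies and pointwise inequality of their support functions. Write $B$ for the closed centred unit disc; by \eqref{eq:def supp} one has $\supp(B)=\o$ on $\SS^1$, and hence, by the additivity \eqref{eq:addition supp} and homogeneity \eqref{eq:lambda supp}, for every $\varepsilon\geq 0$ and $K\in\overline{\K}_c$,
\[
\supp(K+\varepsilon B)=\supp(K)+\varepsilon\o .
\]

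Next I would record the elementary fact that for $K,L\in\overline{\K}_c$ one has $K\subseteq L$ if and only if $\supp(K)\leq\supp(L)$ pointwise on $\SS^1$: the forward implication is immediate from \eqref{eq:def supp}, and the converse follows because a closed convex set equals the intersection of the closed half‑planes $\{y\in\R^2 : \langle x,y\rangle\leq\supp(K)(x)\}$, $x\in\SS^1$, so enlarging the right‑hand sides can only enlarge the intersection. This argument is insensitive to whether $K$ has non‑empty interior, which handles the possibly degenerate members of $\overline{\K}_c$.

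Finally I would combine these with the standard dilation formula
\[
d_{\operatorname{Haus}}(K,L)=\inf\{\varepsilon\geq 0 : K\subseteq L+\varepsilon B \text{ and } L\subseteq K+\varepsilon B\},
\]
the infimum being attained by compactness. Using the first two steps, the condition ``$K\subseteq L+\varepsilon B$ and $L\subseteq K+\varepsilon B$'' is equivalent to $\supp(K)-\varepsilon\o\leq\supp(L)\leq\supp(K)+\varepsilon\o$, i.e. to $\|\supp(K)-\supp(L)\|_{L^\infty}\leq\varepsilon$ (support functions being continuous, the $L^\infty$‑norm is the maximum of $|\supp(K)-\supp(L)|$ over $\SS^1$). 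Taking the infimum over admissible $\varepsilon$ yields $d_{\operatorname{Haus}}(K,L)=\|\supp(K)-\supp(L)\|_{L^\infty}$, which is the asserted isometry; in particular $d_{\operatorname{Haus}}(K,L)=0$ forces $K=L$, so $\supp$ is injective and the statement makes sense as an isometry onto its image.

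There is no genuine obstacle here: the only point needing mild care is the inclusion criterion $K\subseteq L\iff\supp(K)\leq\supp(L)$ in the degenerate cases, and the half‑plane intersection argument covers it; all the rest is bookkeeping with \eqref{eq:addition supp}–\eqref{eq:lambda supp}. This is exactly the standard identification, which is why it can be quoted as a classical fact.
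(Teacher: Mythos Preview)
Your proof is correct and is precisely the standard argument (as found, for instance, in Schneider's monograph cited by the paper). Note that the paper itself does not supply a proof of this lemma: it is stated as a classical fact with a blanket reference to \cite{groemer,schneider}, so there is nothing to compare against beyond observing that your argument is the usual one.
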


From this and Arzel\`a–Ascoli's theorem follows the famous Blaschke selection theorem:

\begin{proposition}\label{prop:blaschke}
\((\overline{\K}_c,d_{\operatorname{Haus}})\) is a proper metric space, {\it i.e.} every bounded closed subset is compact. In turn it is locally compact and complete.
\end{proposition}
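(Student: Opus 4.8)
The plan is to transport everything to the Banach space $C(\SS^1)$ via the isometry $\supp$ of Lemma~\ref{topo unif f s} and then apply the Arzel\`a--Ascoli theorem. Concretely, I will establish two facts. First, $\supp(\overline{\K}_c)$ is a \emph{closed} subset of $(C(\SS^1),\|\cdot\|_{L^\infty})$. Second, every subset of $\supp(\overline{\K}_c)$ that is bounded for $\|\cdot\|_{L^\infty}$ is \emph{relatively compact} in $C(\SS^1)$. Granting these, let $\mathcal F\subseteq \overline{\K}_c$ be closed and bounded for $d_{\operatorname{Haus}}$. Since $\supp$ is an isometry onto its image (hence a homeomorphism onto it), $\supp(\mathcal F)$ is closed in $\supp(\overline{\K}_c)$ and bounded for $\|\cdot\|_{L^\infty}$; by the first fact it is then closed in $C(\SS^1)$, and by the second it is relatively compact there, so it is compact. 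Pulling back along the isometry, $\mathcal F$ is compact, which is precisely properness. Local compactness then follows because closed balls are compact neighbourhoods, and completeness because a Cauchy sequence is bounded, hence eventually contained in a compact closed ball, hence convergent.

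For the second fact: boundedness of a set $\mathcal F\subseteq\overline{\K}_c$ for $d_{\operatorname{Haus}}$ means, taking the origin $\{0\}\in\overline{\K}_c$ as basepoint, that there is $R>0$ with $\max_{k\in K}|k|\le R$ for every $K\in\mathcal F$; since the bodies are symmetric, each $K$ lies in the centred ball of radius $R$, equivalently $\|\supp(K)\|_{L^\infty}\le R$. As recalled above, a support function of a body contained in the centred ball of radius $R$ is $R$-Lipschitz on $\SS^1$. Thus $\supp(\mathcal F)$ is uniformly bounded and uniformly Lipschitz, in particular equicontinuous, so Arzel\`a--Ascoli yields that $\supp(\mathcal F)$ is relatively compact in $(C(\SS^1),\|\cdot\|_{L^\infty})$.

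For the first fact, I use the characterisation of support functions: a function $h\in C(\SS^1)$ equals $\supp(K)$ for some $K\in\overline{\K}_c$ if and only if $h$ is even and its positively $1$-homogeneous extension $\bar h\colon\R^2\to\R$ (with $\bar h(0)=0$ and $\bar h(v)=|v|\,h(v/|v|)$ for $v\ne 0$) is convex, equivalently sublinear; in that case $K=\{v\in\R^2\mid \langle v,x\rangle\le h(x)\ \text{for all }x\in\SS^1\}$. Now if $h_n=\supp(K_n)\to h$ uniformly on $\SS^1$, then $\bar h_n\to\bar h$ uniformly on compact subsets of $\R^2$, so $\bar h$ is convex as a pointwise limit of convex functions, and $h$ is even as a uniform limit of even functions; hence $h\in\supp(\overline{\K}_c)$, proving closedness. (Working with $\overline{\K}_c$ rather than with bodies of non-empty interior is essential here, since such limits may degenerate to a segment or a point.)

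The main obstacle is the first fact, i.e.\ that $\supp(\overline{\K}_c)$ is closed in $C(\SS^1)$: one must recall precisely which continuous functions on $\SS^1$ arise as support functions — including the degenerate bodies such as centred segments and the origin — and verify that this property is stable under uniform limits, which comes down to the stability of convexity under (locally uniform) limits. Once this is in place, the second fact is a direct application of Arzel\`a--Ascoli using the Lipschitz bound already recorded, and the deductions of properness, local compactness and completeness are formal.
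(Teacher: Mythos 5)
Your argument is correct and is exactly the route the paper intends: the paper's one-line proof invokes Lemma~\ref{topo unif f s} together with the Arzel\`a--Ascoli theorem (this is the classical Blaschke selection theorem), and your proposal simply supplies the details — uniform boundedness plus the $R$-Lipschitz bound giving equicontinuity, and closedness of $\supp(\overline{\K}_c)$ under uniform limits via stability of convexity of the one-homogeneous extensions. No gap; the deduction of local compactness and completeness at the end is also fine.
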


The regularity properties of the support functions implies the following strengthened version of Lemma~\ref{topo unif f s}.

\begin{lemma}\label{lem:supp cont H1}
The map
\[\supp\colon (\overline{\K}_c,d_{\operatorname{Haus}}) \to (H^1_{\operatorname{even}},\|\cdot\|_{H^1})\] is continuous.
\end{lemma}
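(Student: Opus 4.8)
The plan is to factor the claimed continuity through the two metric structures we already control. By Lemma~\ref{topo unif f s}, $\supp$ is an isometry from $(\overline{\K}_c, d_{\operatorname{Haus}})$ onto its image in $(H^1_{\operatorname{even}}, \|\cdot\|_{L^\infty})$, so it suffices to show that the identity map $(H^1_{\operatorname{even}}, \|\cdot\|_{L^\infty}) \to (H^1_{\operatorname{even}}, \|\cdot\|_{H^1})$ is continuous \emph{when restricted to the image of $\supp$}. Equivalently: if $K_i \to K$ in the Hausdorff distance, then $\supp(K_i) \to \supp(K)$ in $H^1$. Since $H^1$-convergence means $L^2$-convergence of the functions together with $L^2$-convergence of their derivatives, and $L^\infty$-convergence already gives the first (indeed $\|h\|_{L^2} \le \|h\|_{L^\infty}$ up to the normalizing constant $\sqrt{2\pi}$), the entire content of the lemma is the convergence of the derivatives $\supp(K_i)' \to \supp(K)'$ in $L^2(\SS^1)$.

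First I would extract uniform bounds. A Hausdorff-convergent sequence $K_i \to K$ is bounded, say all $K_i$ and $K$ lie in a centred ball of radius $R$; by the regularity remarks following \eqref{eq:def supp} (or \cite{schneider}), each $\supp(K_i)$ is then $R$-Lipschitz, so $\|\supp(K_i)'\|_{L^\infty} \le R$ and in particular $\sup_i \|\supp(K_i)\|_{H^1} < \infty$. Next I would upgrade the $L^\infty$-convergence $\supp(K_i) \to \supp(K)$ to convergence of derivatives. The cleanest route is: the functions $\supp(K_i)$ are uniformly bounded and uniformly Lipschitz, hence (Arzelà–Ascoli) precompact in $C^0(\SS^1)$, and their only possible limit is $\supp(K)$, which is itself Lipschitz; meanwhile the derivatives $\supp(K_i)'$ are a bounded sequence in the Hilbert space $L^2(\SS^1)$, hence admit weakly convergent subsequences, and the distributional limit of such a subsequence must be $\supp(K)'$ because $\supp(K_i) \to \supp(K)$ uniformly (distributional derivative is continuous under uniform convergence). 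Thus along every subsequence, $\supp(K_i)' \rightharpoonup \supp(K)'$ weakly in $L^2$; by a subsequence-of-subsequence argument this holds for the full sequence.

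The remaining obstacle — and the one genuine point — is promoting weak $L^2$-convergence of the derivatives to strong $L^2$-convergence, i.e.\ controlling the norms $\|\supp(K_i)'\|_{L^2} \to \|\supp(K)'\|_{L^2}$. This is where the convexity of support functions (equivalently, their one-homogeneous extensions being convex) is essential. The concrete handle is the mixed-area/perimeter functional: for a symmetric convex body, $\frac{1}{2}(\|h\|_{L^2}^2 - \|h'\|_{L^2}^2) = 2\pi\,\A(h)$ computes the area (Remark~\ref{remark area}), and area is continuous for the Hausdorff distance; combined with $\|\supp(K_i)\|_{L^2} \to \|\supp(K)\|_{L^2}$ (from $L^\infty$-convergence), this forces $\|\supp(K_i)'\|_{L^2}^2 \to \|\supp(K)'\|_{L^2}^2$. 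Alternatively one may invoke continuity of the perimeter under Hausdorff convergence, which for plane convex bodies is classical \cite{schneider,groemer}, and $\operatorname{per}(K) = \int_0^{2\pi} \supp(K)$ together with a standard identity relates things — but the area route via $\A$ is self-contained given what precedes. Once $\|\supp(K_i)'\|_{L^2} \to \|\supp(K)'\|_{L^2}$ and $\supp(K_i)' \rightharpoonup \supp(K)'$, the Radon–Riesz property of the Hilbert space $L^2(\SS^1)$ yields strong convergence $\supp(K_i)' \to \supp(K)'$ in $L^2$, and hence $\supp(K_i) \to \supp(K)$ in $H^1$, as claimed. \qeda
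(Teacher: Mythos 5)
Your proof is correct, but it follows a genuinely different route from the paper's. The paper's proof is a two-line appeal to \cite[Lemma 2.2.1]{groemer}: uniform boundedness of the bodies gives uniformly bounded gradients of the support functions which moreover converge almost everywhere, and dominated convergence then yields $L^2$-convergence of the derivatives. You instead avoid any pointwise convergence statement for the derivatives: you get weak $L^2$-convergence $\supp(K_i)'\rightharpoonup\supp(K)'$ from the uniform Lipschitz bound plus the distributional characterisation of the limit, and then upgrade to strong convergence via Radon--Riesz, using the continuity of the area under Hausdorff convergence together with the identity $\a(K)=\pi\A(\supp(K))$ to force $\|\supp(K_i)'\|_{L^2}\to\|\supp(K)'\|_{L^2}$. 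This is a nice self-contained argument that exploits the Lorentzian form $\A$ already central to the paper, at the cost of being longer than the citation-based proof; it also works verbatim on all of $\overline{\K}_c$ including the degenerate bodies. One cosmetic slip: with the paper's normalisation $\A(h)=\frac{1}{2\pi}\left(\|h\|_{L^2}^2-\|h'\|_{L^2}^2\right)$, the identity you want is $\|h\|_{L^2}^2-\|h'\|_{L^2}^2=2\pi\A(h)$, not $\frac{1}{2}\left(\|h\|_{L^2}^2-\|h'\|_{L^2}^2\right)=2\pi\A(h)$; this does not affect the argument, since only the continuity of the difference of squared norms is used.
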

\begin{proof}
 For a convergent sequence of convex bodies, by Lemma~\ref{topo unif f s}, their support functions converge for the uniform distance, and hence for the \(L^2\)-distance. Moreover, if \(K_i\to K\), then \(h_i\coloneqq\supp(K_i)\) are uniformly bounded, so by \cite[Lemma 2.2.1]{groemer}, their gradients are uniformly bounded and converge almost everywhere. This implies that they converge in \(L^2\), and as a result, \(h_i\) converge to \(h=\supp(K)\) in \(H^1\)-norm.
\end{proof}

\begin{lemma}\label{lem:K barre ferme}
\(\supp(\overline{\K}_c)\) is closed for  \(\|\cdot\|_{H^1}\).
\end{lemma}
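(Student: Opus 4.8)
The plan is to show that an $H^1$-convergent sequence of support functions of symmetric convex bodies is automatically \emph{uniformly} convergent, and then to transport the closedness question to $(\overline{\K}_c,d_{\operatorname{Haus}})$ via the isometry of Lemma~\ref{topo unif f s} together with its completeness (Proposition~\ref{prop:blaschke}). The key regularity input is the one-dimensional Sobolev embedding, which is what makes this work on $\SS^1$.

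Concretely, I would take $h_i=\supp(K_i)$ with $K_i\in\overline{\K}_c$ and $h_i\to h$ in $\|\cdot\|_{H^1}$ for some $h\in H^1_{\operatorname{even}}$, and aim to produce $K\in\overline{\K}_c$ with $\supp(K)=h$. The first step records that the inclusion $H^1(\SS^1)\hookrightarrow C^0(\SS^1)$ is continuous: for $g\in H^1_{\operatorname{even}}$, picking $y$ with $g(y)^2=\tfrac1{2\pi}\|g\|_{L^2}^2$ (possible since $g^2$ is continuous and this value lies between its min and max) and integrating $(g^2)'=2gg'$ from $y$ to $x$ yields
\[
\|g\|_{L^\infty}^2\le \tfrac1{2\pi}\|g\|_{L^2}^2+2\|g\|_{L^2}\|g'\|_{L^2}\le \big(1+\tfrac1{2\pi}\big)\|g\|_{H^1}^2 .
\]
Applying this to $g=h_i-h_j$ shows that $(h_i)$ is also Cauchy for $\|\cdot\|_{L^\infty}$, so by Lemma~\ref{topo unif f s} the sequence $(K_i)$ is Cauchy in $(\overline{\K}_c,d_{\operatorname{Haus}})$, which is complete by Proposition~\ref{prop:blaschke}; hence $K_i\to K$ for some $K\in\overline{\K}_c$. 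Applying Lemma~\ref{topo unif f s} once more, $\supp(K_i)\to\supp(K)$ uniformly, while also $h_i\to h$ uniformly; uniqueness of uniform limits gives $h=\supp(K)\in\supp(\overline{\K}_c)$. Since this holds for an arbitrary $H^1$-convergent sequence in $\supp(\overline{\K}_c)$, the image is $H^1$-closed.

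I do not expect a genuine obstacle: the only point needing care is that $H^1$-convergence is a priori weaker than uniform convergence, and it is precisely the one-dimensional Sobolev embedding $H^1(\SS^1)\hookrightarrow C^0(\SS^1)$ that bridges this gap — a reminder that the construction is special to plane convex bodies. As an alternative to invoking completeness of $\overline{\K}_c$, one could argue directly in $H^1$: $\supp(\overline{\K}_c)$ is the set of $h\in H^1_{\operatorname{even}}$ whose $1$-homogeneous extension to $\R^2$ is convex, equivalently $h''+h\ge 0$ in the distributional sense; since $h_i\to h$ in $H^1$ forces $h_i''+h_i\to h''+h$ in $H^{-1}$ and the cone of nonnegative distributions is closed under this convergence (pair against a fixed nonnegative test function), the limit again satisfies $h''+h\ge 0$ and is therefore a support function of a symmetric convex body.
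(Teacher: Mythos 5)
Your argument is correct, but it takes a different route from the paper. The paper's proof is a two-line reduction: $H^1$-convergence implies $L^2$-convergence, and then it cites a theorem of Vitale asserting that the set of support functions is closed for the $L^2$-norm. You instead prove the one-dimensional Sobolev embedding $H^1(\SS^1)\hookrightarrow C^0(\SS^1)$ by hand (the intermediate-value choice of $y$ and the estimate $\|g\|_{L^\infty}^2\le \tfrac1{2\pi}\|g\|_{L^2}^2+2\|g\|_{L^2}\|g'\|_{L^2}$ are both fine), upgrade $H^1$-convergence to uniform convergence, and then close the loop through the isometry of Lemma~\ref{topo unif f s} and the completeness of $(\overline{\K}_c,d_{\operatorname{Haus}})$ from Proposition~\ref{prop:blaschke}. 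What your route buys is self-containedness: it replaces the external $L^2$-closedness result by the classical Blaschke-type completeness the paper already records, at the cost of a genuinely one-dimensional ingredient (the Sobolev embedding fails to give continuity on $\SS^n$ for $n\ge 2$, whereas Vitale's $L^2$ statement is dimension-free). Your alternative argument via the distributional inequality $h''+h\ge 0$ is also sound and is consistent with the characterisation the paper itself uses in the proof of Corollary~\ref{cor: densite}; it only needs the (true, but worth stating) fact that a continuous even function with $h''+h\ge 0$ as a distribution is the support function of a symmetric convex body. Either version is acceptable.
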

\begin{proof}
Let \(h_n\in \supp(\overline{\K}_c)\) such that \(h_n\to h\) for \(\|\cdot\|_{H^1}\). Hence, it converges for the \(L^2\)-norm.
By \cite[Theorem 3]{vitale}, the space of support functions is closed for the \(L^2\)-norm, so \(h\) is a support function.
\end{proof}

Let \(\mathrm{S}\) be the set of centrally symmetric segments of the plane with positive length. Then
\begin{equation}\label{eq:intro S}\overline{\mathcal{K}}_c=\mathcal{K}_c\sqcup  \mathrm{S}\sqcup  \{0\}~.\end{equation}
Note that for \(K \in \mathcal{K}_c\), \(\supp(K)\) is positive, as the origin is contained in the interior of \(K\).

\begin{figure}[h!]
\begin{center}
\psfrag{K}{$\partial K$}
\psfrag{k}{$k$}
\psfrag{n}{$\binom{\cos\theta}{\sin\theta}$}
\psfrag{np}{$\binom{-\sin\theta}{\cos\theta}$}
\psfrag{h}{$h_K(\theta)$}
\psfrag{S}{$\SS^1$}
\psfrag{hp}{$h_K'(\theta)$}
\psfrag{l}{\(\ell\)}
\includegraphics[width=0.5\linewidth]{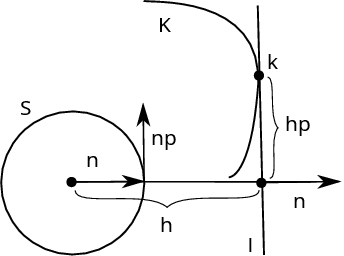}\caption{To Remark~\ref{remark area}. Parametrisation of the boundary of a convex body by the support function.}\label{fig:parametr}
\end{center}
\end{figure}

\begin{remark}\label{remark area}
Let \(K\) be a strictly convex body on the Euclidean plane whose boundary $\partial K$ is $C^2$. By strict convexity,  there is a tangent line $\ell$ orthogonal to $(\cos(\theta),\sin(\theta))$ which meets $\partial K$ at a unique point $c_K(\theta)$. This gives a parametrisation \(c_K:\SS^1\to \partial K\) of the boundary. As $h_K(\theta)\coloneqq\supp(K)(\theta)$ is exactly the distance from $\ell$ to the origin of $\R^2$, there exists a function $g$ such that 
$c_K(\theta)=h_K(\theta)\binom{\cos\theta}{\sin\theta} + g(\theta)\binom{-\sin\theta}{\cos\theta}$.  As $c_K'(\theta)$ is tangent to $\partial K$, we have $\langle c_K'(\theta), \binom{\cos\theta}{\sin\theta} \rangle=0$, that gives the following (see Figure~\ref{fig:parametr}):

\begin{equation}\label{eq:param courbe}c_K(\theta)=h_K(\theta)\binom{\cos\theta}{\sin\theta} + h_K'(\theta) \binom{-\sin\theta}{\cos\theta}~.\end{equation}

If \(\a(K)\) is the area of the convex body \(K\), the Green--Riemann formula and \eqref{eq:param courbe}  give
\begin{equation*}\label{eq:v2lisse}\a(K)=\frac{1}{2}\int_{\partial K} x \mbox{d}y-y\mbox{d}x=\frac{1}{2}\int_0^{2\pi} h_K(h_K+h_K'')~.\end{equation*}
As $h_K(0)=h_K(2\pi)$ and $h_K'(0)=h_K'(2\pi)$, 
$$0=\int_0^{2\pi} (h_Kh_K')'=\int_0^{2\pi}h_K'^2 + \int_0^{2\pi}h_Kh_K''~,$$
we obtain 
\begin{equation}\label{eq:area equality}
\a(K)=\pi \cdot \A(\supp(K))~.
\end{equation}
This formula holds for any convex body by approximation, and was first  proved by Blaschke  (\cite{blaschke}, see also  \cite[p.298]{schneider} and the references therein).

The constant \(\pi\) is easy to check:  if \(D\) is the centred unit disc of the Euclidean plane, then \(\o=\supp(D)\), and 
 \(\A(\o)=1\) but \(\a(D)=\pi\).

Up to this multiplicative constant,  \(\A(\supp(K_1),\supp(K_2))\) corresponds to the \emph{mixed area} of the convex bodies \(K_1\) and \(K_2\).  This has many geometrical implications, for example  the reversed Cauchy--Schwarz inequality \eqref{eq:RCS} for the Lorentzian bilinear form \(\A\) gives on \(\K_c\) the famous \emph{Minkowski inequality} (or \emph{Alexandrov--Fenchel inequality} in higher dimensions): 
\begin{equation}\label{eq minkowski}
\a(K_1,K_2)^2\geq \a(K_1)\a(K_2)~,
\end{equation}
with equality if and only if \(K_1=\lambda K_2\) for \(\lambda>0\).

Also, \begin{equation}\label{eq:perimeter}\pi_0(\supp(K))=\frac{1}{2\pi}\p(K)~,
\end{equation} where \(\p(K)\) is the perimeter of \(K\). Moreover, 
\eqref{eq minkowski} with \(K_2=D\) is the isoperimetric inequality.
\end{remark}

If \(\K_c^\pi\) denotes the subset of elements of \(\K_c\) of area \(\pi\), any \(K\in \K_c^\pi\) satisfies \(\A(\supp(K)) = 1\) by formula \eqref{eq:area equality}, and \(\pi_0(\supp(K)) > 0\), since \(\supp(K)\) is a positive map. Therefore, the image of \(\K_c^\pi\) by \(\supp\) is a subset of \(\mathbb{H}^\infty\).  Let \(\supp^\pi\) be the restriction of \(\supp\) to \(\K_c^\pi\).

\begin{lemma}\label{lem:vitale}
The preimage by 
\[\supp^\pi\colon (\K_c^\pi,d_{\operatorname{Haus}}) \to (\mathbb{H}^\infty,d_{\H^\infty}) \]
of a bounded set is bounded.
\end{lemma}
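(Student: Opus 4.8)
The plan is to unwind the definitions and reduce the statement to the already-established Lemma~\ref{lem bounded 1}, which says that a $d_{\H^\infty}$-bounded subset of $\H^\infty$ is bounded for $\|\cdot\|_{H^1}$, together with the fact that $\supp$ is an isometry for the $L^\infty$-distance (Lemma~\ref{topo unif f s}). So suppose $S\subset \H^\infty$ is bounded for $d_{\H^\infty}$; I want to show $(\supp^\pi)^{-1}(S)\subset \K_c^\pi$ is bounded for $d_{\operatorname{Haus}}$. Since $d_{\operatorname{Haus}}$ on $\overline{\K}_c$ corresponds via $\supp$ to the $L^\infty$-distance, and since the bodies in question are symmetric (so that $\|\supp(K)\|_{L^\infty}$ is comparable to the circumradius of $K$, hence controls $K$ in the Hausdorff metric around $0$), it suffices to bound $\|\supp(K)\|_{L^\infty}$ uniformly over $K$ with $\supp(K)\in S$.

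First I would invoke Lemma~\ref{lem bounded 1}: since $\supp(K)\in S$ and $S$ is $d_{\H^\infty}$-bounded, we get a uniform bound $\|\supp(K)\|_{H^1}\leq M$. Then I would use the Sobolev embedding $H^1(\SS^1)\hookrightarrow L^\infty(\SS^1)$ (equivalently, a one-dimensional Sobolev/Morrey inequality: a periodic $H^1$ function on $\SS^1$ is $1/2$-Hölder, in particular bounded, with $\|h\|_{L^\infty}\leq c\|h\|_{H^1}$ for a universal constant $c$). This gives $\|\supp(K)\|_{L^\infty}\leq cM$ for all such $K$. Finally, translating back through Lemma~\ref{topo unif f s}, all these bodies lie within $L^\infty$-distance $2cM$ of each other (or within $d_{\operatorname{Haus}}$-distance $cM$ of the centred disc of that radius), so $(\supp^\pi)^{-1}(S)$ is $d_{\operatorname{Haus}}$-bounded.

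The only mild subtlety — and the step I would be most careful about — is the comparison between the $H^1$-norm control and genuine boundedness of the convex bodies themselves: one must make sure that a uniform $L^\infty$-bound on support functions of \emph{symmetric} bodies really does confine the bodies to a fixed bounded region of the plane (it does, since $\supp(K)(x)$ at $x$ and $-x$ together bound the extent of $K$ in the direction $x$, and symmetry makes $K\subset$ the centred ball of radius $\|\supp(K)\|_{L^\infty}$), and that $d_{\operatorname{Haus}}$ of two bodies both contained in a fixed ball is bounded. Both are routine, so there is no real obstacle; the lemma is essentially a packaging of Lemma~\ref{lem bounded 1} plus the one-dimensional Sobolev inequality.
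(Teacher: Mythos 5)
Your proof is correct, but it replaces the paper's key step with a different one. Both arguments start the same way: Lemma~\ref{lem bounded 1} converts $d_{\H^\infty}$-boundedness into an $H^1$-bound, and Lemma~\ref{topo unif f s} converts an $L^\infty$-bound on $\supp(K)-\o$ into a Hausdorff bound at the end. The difference is in the middle. The paper only retains the $L^2$-bound from the $H^1$-bound and then invokes Vitale's inequality \eqref{eq:Vitale}, a convex-geometry result comparing the $L^2$- and $L^\infty$-distances of support functions, to conclude that $\|h-\o\|_{L^\infty}$ is uniformly bounded. You instead use the full $H^1$-bound together with the one-dimensional Sobolev (Morrey) embedding $H^1(\SS^1)\hookrightarrow L^\infty(\SS^1)$, which is valid here and makes no use of convexity at all; your closing remark about symmetric bodies being confined to a centred ball is in fact already subsumed by the isometry of Lemma~\ref{topo unif f s}, so that worry is moot. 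Your route is arguably more elementary and self-contained (a standard Sobolev estimate on the circle rather than a citation to Vitale), while the paper's route has the advantage of needing only $L^2$-control and of carrying over to support functions on $\SS^{n-1}$ in higher dimensions, where the embedding of $H^1$ into $L^\infty$ fails — which matters for the generalisations alluded to in Remark~\ref{rmk: higher rank}. For the two-dimensional statement at hand, both proofs are complete.
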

\begin{proof}
Let \(X\) be a set in \(\mathbb{H}^\infty\) bounded for \(d_{\H^\infty}\). By Lemma~\ref{lem bounded 1}, it is bounded for the \(H^1\)-norm. Then it is bounded for the \(L^2\)-norm. So there exists a \(C\) such that for any \(h\in X\cap \supp^\pi(\K_c^\pi)\), the \(L^2\)-distance between \(\o\) and \(h\) is less than \(C\). By a theorem of Vitale  \cite[Corollary 2]{vitale}, there exists a positive dimensional constant \(\alpha\) such that
\begin{align}\label{eq:Vitale}
\alpha \frac{\|h-\o\|_{L^\infty}^{3}}{1+\|h-\o\|_{L^\infty}} \leq  \|h-\o\|_{L^2}^2~. 
\end{align}
Hence, there is a constant \(C'\) such that
\[ \frac{\|h-\o\|_{L^\infty}^{3}}{1+\|h-\o\|_{L^\infty}} \leq  C'\]
that implies that \(\|h-\o\|_{L^\infty}\) is uniformly bounded over \(X\cap \supp^\pi(\K_c^\pi)\).
\end{proof}

\begin{lemma}\label{lem:supp conti inj}
The map \(\supp^\pi:(\K_c^\pi,d_{\operatorname{Haus}})\to (\H^\infty,d_{\H^\infty})\) is a continuous injective map.
\end{lemma}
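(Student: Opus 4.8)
The plan is to prove continuity and injectivity separately, reusing the structural lemmas already established.

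\medskip

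\textbf{Continuity.} First I would observe that continuity of \(\supp^\pi\) is essentially free: by Lemma~\ref{lem:supp cont H1}, the map \(\supp\colon(\overline{\K}_c,d_{\operatorname{Haus}})\to(H^1_{\operatorname{even}},\|\cdot\|_{H^1})\) is continuous, and restricting to \(\K_c^\pi\) only makes the source smaller. So if \(K_i\to K\) in \((\K_c^\pi,d_{\operatorname{Haus}})\), then \(\supp(K_i)\to\supp(K)\) in \(H^1\)-norm. Both \(\supp(K_i)\) and \(\supp(K)\) lie in \(\H^\infty\) (since \(K_i,K\in\K_c^\pi\), their support functions satisfy \(\A(\cdot)=1\) with positive \(\pi_0\) by \eqref{eq:area equality} and \eqref{eq:perimeter}), so by Lemma~\ref{lem:meme topo} the \(H^1\)-topology and the \(d_{\H^\infty}\)-topology agree on \(\H^\infty\). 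Hence \(\supp(K_i)\to\supp(K)\) for \(d_{\H^\infty}\), which is continuity of \(\supp^\pi\).

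\medskip

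\textbf{Injectivity.} For injectivity, suppose \(\supp^\pi(K_1)=\supp^\pi(K_2)\), i.e.\ \(\supp(K_1)=\supp(K_2)\) as functions in \(H^1_{\operatorname{even}}\). Since the support function determines the convex body uniquely (the body is recovered as the intersection of the half-planes \(\{k:\langle x,k\rangle\le \supp(K)(x)\}\) over \(x\in\SS^1\), a standard fact from \cite{schneider}), we conclude \(K_1=K_2\). Alternatively, one can invoke Lemma~\ref{topo unif f s}, which states that \(\supp\) is an isometry from \((\overline{\K}_c,d_{\operatorname{Haus}})\) onto its image in \((H^1_{\operatorname{even}},d_{L^\infty})\); in particular it is injective, and its restriction \(\supp^\pi\) to \(\K_c^\pi\) inherits injectivity.

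\medskip

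\textbf{Main obstacle.} There is no real obstacle here: the statement is a bookkeeping consequence of the earlier lemmas, the only mild subtlety being to make sure the image genuinely lands in \(\H^\infty\) (so that Lemma~\ref{lem:meme topo} applies) rather than merely in \(H^1_{\operatorname{even}}\). This is exactly where the normalization to area \(\pi\) — equivalently \(\A(\supp(K))=1\) — and positivity of the perimeter (hence of \(\pi_0\)) are used. The content of the excerpt's subsequent development (Lemma~\ref{lem:vitale}, and presumably a properness statement) is what would be needed for the stronger claim that \(\supp^\pi\) is a homeomorphism onto a closed subset, but for continuity and injectivity alone the above suffices.
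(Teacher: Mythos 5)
Your proposal is correct and follows essentially the same route as the paper: injectivity from the fact that a convex body is determined by its support function, and continuity from Lemma~\ref{lem:supp cont H1} combined with the equivalence of the \(H^1\)- and \(d_{\H^\infty}\)-topologies on \(\H^\infty\) (the paper simply inlines the relevant direction of Lemma~\ref{lem:meme topo} by checking \(\A(h_i-h)\to 0\) directly and invoking Remark~\ref{rem cv dH A}). Your remark that one must verify the image lands in \(\H^\infty\) is a correct and worthwhile observation.
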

\begin{proof}
Injectivity comes from the fact that convex bodies are uniquely determined by their support function. 
Let us check the continuity. From Lemma~\ref{lem:supp cont H1},  \((h_i)_i\)
converge to \(h\) in \(L^2\) and  \((h'_i)_i\)
converge to \(h'\) in  \(L^2\), so it is immediate from the definition of \(\A\) that \(\A(h_i-h)\to 0\),
that gives  \(d_{\H^\infty}(h_i,h)\to 0\) by Remark~\ref{rem cv dH A}. 
\end{proof}

\begin{lemma}\label{supp proper}
The continuous map \(\supp^\pi:(\K_c^\pi,d_{\operatorname{Haus}})\to (\H^\infty,d_{\H^\infty})\) is proper, \emph{i.e.} the preimage of a compact set is compact.
\end{lemma}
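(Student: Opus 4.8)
The plan is to deduce properness from the preceding three lemmas together with the Blaschke selection theorem (Proposition~\ref{prop:blaschke}). Recall that a continuous map between metric spaces is proper in the sense of preimages of compact sets being compact provided: (i) the preimage of every bounded set is bounded, and (ii) the source space is proper, i.e.\ closed bounded sets are compact, and (iii) the map has closed graph or, more simply here, $\K_c^\pi$ is closed in $\overline{\K}_c$. First I would fix a compact set $Q\subseteq(\H^\infty,d_{\H^\infty})$; being compact it is in particular bounded. By Lemma~\ref{lem:vitale}, the preimage $(\supp^\pi)^{-1}(Q)\subseteq\K_c^\pi$ is bounded for $d_{\operatorname{Haus}}$.

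Next I would show that the closure of $(\supp^\pi)^{-1}(Q)$ inside $(\overline{\K}_c,d_{\operatorname{Haus}})$ is compact: this is immediate from Proposition~\ref{prop:blaschke}, since the closure is a closed bounded subset of a proper metric space. Call this compact set $\overline{P}$. By continuity of the full map $\supp$ on $\overline{\K}_c$ (Lemma~\ref{lem:supp cont H1} gives continuity into $H^1$, hence into $L^2$, hence into $(\H^\infty,d_{\H^\infty})$ by Remark~\ref{rem cv dH A}, on the subset where the image lands in $\H^\infty$), the image $\supp(\overline{P})$ is compact, hence closed in $\H^\infty$.

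The remaining point, which I expect to be the only mildly delicate step, is to verify that $(\supp^\pi)^{-1}(Q)$ is actually \emph{closed} in $\K_c^\pi$ — equivalently, that no sequence $K_i\in\K_c^\pi$ with $\supp^\pi(K_i)\in Q$ can converge in $\overline{\K}_c$ to a boundary element of $\overline{\K}_c\setminus\K_c^\pi$, i.e.\ to a segment or to $\{0\}$. Suppose $K_i\to K_\infty$ in $d_{\operatorname{Haus}}$ with $K_\infty\in\mathrm{S}\cup\{0\}$. Then $h_i\coloneqq\supp(K_i)\to h_\infty\coloneqq\supp(K_\infty)$ uniformly, hence in $L^2$, and $\A(h_\infty)=\frac1\pi\a(K_\infty)=0$ by \eqref{eq:area equality} since segments and points have zero area. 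On the other hand, $h_i\in Q$ and $Q$ is a compact subset of $\H^\infty=\{\,\A(h)=1,\ \pi_0(h)>0\,\}$; passing to a subsequence, $h_i\to h'$ for $d_{\H^\infty}$ with $h'\in Q\subseteq\H^\infty$, so $h'\in H^1_{\operatorname{even}}$ and by Remark~\ref{rem cv dH A} (together with Lemma~\ref{lem:meme topo}) $h_i\to h'$ in $H^1$, hence in $L^2$. Uniqueness of $L^2$-limits forces $h_\infty=h'$, contradicting $\A(h_\infty)=0\neq 1=\A(h')$. Therefore $(\supp^\pi)^{-1}(Q)$ is closed in $\K_c^\pi$, hence closed in $\overline{\K}_c$ (as $\K_c^\pi$ is itself the $\supp$-preimage of the $d_{\H^\infty}$-closed set $\H^\infty$ intersected with the closed set $\{\,\A=1\,\}$, or simply because it coincides with $\overline{P}\cap(\supp)^{-1}(Q)$). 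Combining: $(\supp^\pi)^{-1}(Q)=\overline{P}\cap(\supp)^{-1}(Q)$ is a closed subset of the compact set $\overline{P}$, hence compact. This proves $\supp^\pi$ is proper.

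A cleaner packaging, which I would likely present instead, avoids separating out the closedness argument: let $(K_i)_i$ be any sequence in $(\supp^\pi)^{-1}(Q)$. By Lemma~\ref{lem:vitale} it is $d_{\operatorname{Haus}}$-bounded, so by Blaschke (Proposition~\ref{prop:blaschke}) a subsequence converges to some $K_\infty\in\overline{\K}_c$. By Lemma~\ref{lem:supp cont H1} the support functions converge in $H^1$, hence $\A(\supp(K_i))\to\A(\supp(K_\infty))$; but $\supp(K_i)\in Q\subseteq\H^\infty$ forces $\A(\supp(K_i))=1$ for all $i$ and, along a further subsequence where $\supp(K_i)\to q\in Q$ for $d_{\H^\infty}$, Remark~\ref{rem cv dH A} and Lemma~\ref{lem:meme topo} give $\supp(K_i)\to q$ in $H^1$, so $\supp(K_\infty)=q\in\H^\infty$; in particular $\A(\supp(K_\infty))=1$ and $\pi_0(\supp(K_\infty))>0$, so $K_\infty\in\K_c^\pi$ and $K_\infty\in(\supp^\pi)^{-1}(Q)$. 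Thus every sequence in $(\supp^\pi)^{-1}(Q)$ has a subsequence converging within it, so $(\supp^\pi)^{-1}(Q)$ is (sequentially, hence) compact. The main obstacle is purely the degeneration issue — ruling out that the Blaschke limit escapes to a lower-dimensional convex set — and this is handled by the area formula \eqref{eq:area equality} forcing the limiting area to be $\pi>0$.
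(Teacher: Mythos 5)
Your proof is correct and follows essentially the same route as the paper: the preimage is bounded by Lemma~\ref{lem:vitale} and closed, hence compact by the Blaschke selection theorem (Proposition~\ref{prop:blaschke}). The paper's version is terser---it simply cites continuity (Lemma~\ref{lem:supp conti inj}) for closedness---whereas you explicitly rule out degeneration of the Hausdorff limit to a segment or a point via the area formula \eqref{eq:area equality}, a worthwhile extra care since Proposition~\ref{prop:blaschke} concerns closed bounded subsets of \(\overline{\K}_c\) rather than of \(\K_c^\pi\).
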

\begin{proof}
Let \(X\subset (\H^\infty,d_{\H^\infty})\) be a compact subset. In particular, it is closed and bounded as we are in a metric space. 
Its preimage by \(\supp^\pi\) is closed (Lemma~\ref{lem:supp conti inj}) and bounded (Lemma~\ref{lem:vitale}), hence compact (Proposition~\ref{prop:blaschke}).
\end{proof}

\begin{proposition}\label{rem: supp homeo}
    The map \(\supp^\pi:(\K_c^\pi,d_{\operatorname{Haus}})\to (\H^\infty,d_{\H^\infty})\) is a homeomorphism onto its image.
\end{proposition}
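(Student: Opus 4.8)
The plan is to obtain the statement from the three properties of $\supp^\pi$ already established — continuity and injectivity (Lemma~\ref{lem:supp conti inj}) together with properness (Lemma~\ref{supp proper}) — combined with the fact that $(\K_c^\pi,d_{\operatorname{Haus}})$ and $(\H^\infty,d_{\H^\infty})$ are metric, hence first countable, spaces. Since $\supp^\pi$ is already a continuous bijection onto its image $\supp^\pi(\K_c^\pi)$ (equipped with the subspace topology from $\H^\infty$), all that remains is to prove that the inverse map is continuous, and by first countability it suffices to work with sequences: I must show that if $h_i\coloneqq\supp^\pi(K_i)$ converges to $h\coloneqq\supp^\pi(K)$ in $\H^\infty$, then $K_i\to K$ for $d_{\operatorname{Haus}}$.

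To this end I would first note that $\{h\}\cup\{h_i : i\in\mathbb{N}\}$ is compact in $\H^\infty$, being a convergent sequence together with its limit, so by properness its preimage $Z\coloneqq(\supp^\pi)^{-1}\!\big(\{h\}\cup\{h_i : i\in\mathbb{N}\}\big)$ is a compact subset of $\K_c^\pi$ containing $K$ and every $K_i$. Then I would argue by contradiction: if $K_i\not\to K$, there are an $\varepsilon>0$ and a subsequence with $d_{\operatorname{Haus}}(K_{i_j},K)\geq\varepsilon$ for all $j$; since the $K_{i_j}$ all lie in the compact set $Z$, a further subsequence converges, say $K_{i_{j_k}}\to K'\in\K_c^\pi$. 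Continuity of $\supp^\pi$ then gives $\supp^\pi(K_{i_{j_k}})\to\supp^\pi(K')$, while $\supp^\pi(K_{i_{j_k}})=h_{i_{j_k}}\to h=\supp^\pi(K)$; uniqueness of limits in the metric space $\H^\infty$ forces $\supp^\pi(K')=\supp^\pi(K)$, and injectivity then yields $K'=K$, contradicting $d_{\operatorname{Haus}}(K_{i_{j_k}},K)\geq\varepsilon$.

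The one delicate point — and the reason this is not entirely a one-line deduction — is that $\H^\infty$ is infinite dimensional and hence not locally compact, so one cannot simply quote the textbook fact that a proper continuous map into a locally compact Hausdorff space is closed; instead one uses that $\H^\infty$ is metrizable, which is precisely what makes the sequential argument above legitimate. No new estimates are needed, since the analytic input (Blaschke selection and Vitale's inequalities) has already been packaged into Lemma~\ref{lem:vitale}, Lemma~\ref{supp proper} and Proposition~\ref{prop:blaschke}.
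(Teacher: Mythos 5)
Your argument is correct and is essentially the paper's proof: the paper simply cites the general fact that a continuous, proper, injective map from a locally compact space into a metric space is a homeomorphism onto its image, drawing on exactly the three lemmas you invoke (Lemma~\ref{lem:supp conti inj}, Lemma~\ref{supp proper}, Proposition~\ref{prop:blaschke}). Your sequential argument is just that general fact unwound in this instance, and your remark that metrizability of \(\H^\infty\) (not local compactness of the target) is what legitimises the sequential reasoning is accurate.
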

\begin{proof}
This simply follows from the preceding results, as a continuous proper injective mapping from a locally compact space into a metric space is a homeomorphism onto its image.
\end{proof}

\begin{remark}\label{remark midpoint}
From the finite-dimensional case, 
the image of a geodesic between two points in \(\H^\infty\) is the intersection between  \(\H^\infty\) and the plane spanned by the lines in \(H^1_{\operatorname{even}}\) defined by the two points. From \eqref{eq:addition supp}, it follows that the image of the hyperbolic geodesic between \(\supp(K_0)\) and \(\supp(K_1)\) is the image of 
\[t\mapsto p(t)\coloneqq\frac{\supp(K_t)}{\A(K_t)^{1/2}}\]
where \(K_t=(1-t)K_0+tK_1\) for \(0\leq t\leq 1\).

It is a general fact that the midpoint \(h\) in  \(\H^\infty\) between two points \(h_1,h_2\in \H^\infty\) is the  projection onto \(\H^\infty\) (along the vector line of the midpoint of the segment) of the midpoint of the segment in \(H^1_{\operatorname{even}}\), \emph{i.e.} \(h=(h_1+h_2)/2\A((h_1+h_2)/2))^{1/2}\). Indeed, 
\[
d_{\H^\infty}(h_1,h)=\cosh^{-1}\frac{\A(h_1,h_2)+1}{\A(h_1+h_2)^{1/2}}=d_{\H^\infty}(h_2,h)~.
\]

In turn, if \(K_1,K_2\in \K_c^\pi\), the midpoint of their images by \(\supp\) in \(\H^\infty\) differ from \(K_1+K_2\) by a dilation, or equivalently, they are homothetic, \emph{i.e.} \(C\cdot\supp(K_1+K_2)\) for some \(C>0\).
\end{remark}


\subsection{Segments and ellipses}

In the following, we denote by \(\CH\) the closed convex hull, for the convexity which is defined according to the context. If we want to insist on which topology is being used, we use a subscript: \(\CH_{\operatorname{Haus}}\) (resp.   \(\CH_{H^1}\)) if one considers \(d_{\operatorname{Haus}}\) (resp.  \(\|\cdot \|_{H^1}\)), and so on. Let us note that the Hausdorff distance is naturally used over non-empty compact sets of the plane, and in this space, the subspace of convex bodies is closed \cite[Theorem 1.8.6]{schneider}.

The set of \emph{symmetric zonoids} 
is the closure (for the Hausdorff topology) of the convex hull of the set $\mathrm{S}$ of symmetric segments of positive length introduced in \eqref{eq:intro S}. 
By construction, a symmetric zonoid is centrally symmetric with respect to the origin.  The following famous result says that the converse holds.
This is false in higher dimensions, see for example \cite[Corollary 3.5.7]{schneider}. We provide a simple proof for convenience. 

\begin{lemma}\label{lem Z=C}
Each symmetric plane convex body is a symmetric zonoid. In other words,   \(\overline{\K}_c=\CH_{\operatorname{Haus}}(\mathrm{S})\). Moreover,  \(\overline{\K}_c=\CH_{\operatorname{Haus}}(\mathcal{E}_c)\) where \(\mathcal{E}_c\) is the space of symmetric ellipses.
\end{lemma}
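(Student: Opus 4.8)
\textbf{Proof proposal for Lemma~\ref{lem Z=C}.}

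The plan is to prove the two statements in turn, reducing the second to the first. For the first statement, since $\mathrm{S}\subset\CH_{\operatorname{Haus}}(\mathcal{E}_c)$ would follow from approximating segments by thin ellipses, and since $\overline{\K}_c$ is closed for $d_{\operatorname{Haus}}$ (it is closed inside the compact sets of the plane, by \cite[Theorem 1.8.6]{schneider}) and convex under Minkowski operations, both claimed convex hulls are automatically contained in $\overline{\K}_c$; the content is the reverse inclusion $\overline{\K}_c\subseteq\CH_{\operatorname{Haus}}(\mathrm{S})$. First I would handle the easy cases: the point $\{0\}$ is a limit of symmetric segments shrinking to the origin (so it lies in the closure), and each element of $\mathrm{S}$ is trivially a symmetric zonoid, so by \eqref{eq:intro S} it suffices to show every $K\in\mathcal{K}_c$ is a symmetric zonoid.

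The main step is to show a symmetric convex body with $C^2$, positively-curved boundary is a zonoid, then pass to the general case by approximation (every symmetric convex body is a $d_{\operatorname{Haus}}$-limit of smooth ones, e.g.\ by convolution of support functions, and $\CH_{\operatorname{Haus}}(\mathrm{S})$ is closed). For the smooth case I would use the support-function description: by \eqref{eq:param courbe} the boundary is parametrised by $c_K(\theta)=h_K(\theta)\binom{\cos\theta}{\sin\theta}+h_K'(\theta)\binom{-\sin\theta}{\cos\theta}$, so differentiating, the outer normal direction rotates monotonically and the curvature radius is $\varrho(\theta)=h_K(\theta)+h_K''(\theta)\geq 0$. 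A symmetric segment in direction $\binom{\cos\theta}{\sin\theta}$ of half-length $a$ has support function $a\,|\cos(\cdot-\theta)|$ (up to reindexing the $\SS^1$ parameter). The key computation is the classical Fourier identity: writing the even, $\pi$-periodic function $|\cos\phi|$ has the cosine expansion $|\cos\phi|=\frac{2}{\pi}+\frac{4}{\pi}\sum_{k\geq 1}\frac{(-1)^{k+1}}{4k^2-1}\cos(2k\phi)$, one checks that integrating the segment support functions against a suitable nonnegative density $\mu(\theta)\,d\theta$ built from $\varrho$ reproduces $h_K$ exactly. Concretely: set $\mu=\tfrac12\varrho\geq 0$ and verify $\int_0^{2\pi}\mu(\theta)\,|\!\cos(\phi-\theta)|\,d\theta = h_K(\phi)$ by comparing Fourier coefficients — the kernel $|\cos|$ kills no nonconstant even Fourier mode on a set of measure zero and the constant mode matches because $\int\varrho=\p(K)$ relates correctly to $\pi_0(h_K)$. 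Hence $K=\int_0^{2\pi}\mu(\theta)\,[-u_\theta,u_\theta]\,d\theta$ is a limit of Minkowski sums of scaled symmetric segments, i.e.\ a symmetric zonoid.

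The second statement then follows quickly: $\mathrm{S}\subset\CH_{\operatorname{Haus}}(\mathcal{E}_c)$ since a symmetric segment is a Hausdorff limit of degenerating ellipses, so $\CH_{\operatorname{Haus}}(\mathrm{S})\subseteq\CH_{\operatorname{Haus}}(\mathcal{E}_c)$; combined with the first statement and with $\CH_{\operatorname{Haus}}(\mathcal{E}_c)\subseteq\overline{\K}_c$, all three sets coincide. The step I expect to be the main obstacle is the honest verification that the density $\mu$ built from the curvature radius does reproduce $h_K$ against the segment kernel — i.e.\ identifying the Fourier-analytic inversion of the cosine transform on $H^1_{\operatorname{even}}$ — together with the care needed to turn the integral representation into a genuine $d_{\operatorname{Haus}}$-limit of finite Minkowski sums (a Riemann-sum argument, using that $\supp$ turns Minkowski sums into function sums, \eqref{eq:addition supp}, and is a $d_{\operatorname{Haus}}$-to-$d_{L^\infty}$ isometry, Lemma~\ref{topo unif f s}).
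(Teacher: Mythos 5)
Your route is genuinely different from the paper's and is, in outline, a valid classical argument; but the explicit density you wrote down is wrong, so the verification you flag as "the main obstacle" would in fact fail as stated. The paper avoids all Fourier analysis: it approximates a symmetric convex body by symmetric convex polygons in $d_{\operatorname{Haus}}$ and observes that any symmetric polygon is a finite Minkowski sum of symmetric segments, by repeatedly splitting off a pair of equal parallel sides (Figure~\ref{fig:poly segment}); the ellipse statement then follows exactly as in your last paragraph. Your approach instead inverts the cosine transform on the smooth strictly convex bodies and then smooths. That is the standard textbook proof that planar symmetric bodies are zonoids, and it buys you an explicit generating measure; the cost is the distributional/Fourier computation plus two approximation steps (smoothing the body, and Riemann sums for the integral of segments), versus the paper's single polygonal approximation.

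The concrete error: with the segment kernel $|\cos(\varphi-\theta)|$ the correct density is $\mu(\theta)=\tfrac14\varrho(\theta+\tfrac\pi2)$, not $\tfrac12\varrho(\theta)$. Indeed, using $\partial_\psi^2|\sin(\varphi-\psi)|=-|\sin(\varphi-\psi)|+2\sum_k\delta_{\varphi+k\pi}(\psi)$ and $\pi$-periodicity of $h_K$, two integrations by parts give
\begin{equation*}
\int_0^{2\pi}|\sin(\varphi-\psi)|\bigl(h_K(\psi)+h_K''(\psi)\bigr)\,\mathrm{d}\psi=4\,h_K(\varphi)~,
\end{equation*}
and substituting $\psi=\theta+\tfrac\pi2$ turns the kernel into $|\cos(\varphi-\theta)|$; already for the disc ($h_K=\o$, $\varrho=\o$) your $\mu=\tfrac12\varrho$ returns $2$ instead of $1$, and for non-round bodies the missing $\tfrac\pi2$ rotation matters (geometrically, the segment in direction $\theta$ contributes the boundary arc whose normal is \emph{perpendicular} to $\theta$). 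With this corrected density, and noting that $\varrho=h_K+h_K''\geq 0$ by convexity and that every even Fourier coefficient of $|\cos|$ is nonzero (so the cosine transform is injective on even functions), your argument goes through; the remaining approximation steps you describe are routine given Lemma~\ref{topo unif f s} and \eqref{eq:addition supp}.
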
  
\begin{proof}
That is obvious for segments and the origin.
Any convex body of positive area  can be approximated by 
convex polygons in $\K_c$ for the Hausdorff distance, so it suffices to prove that any symmetric convex polygon  is a symmetric zonoid. Indeed, we have that any such polygon $P$ is a sum of symmetric segments, see Figure~\ref{fig:poly segment}. 

Any symmetric segment can be obtained as the limit of symmetric ellipses, hence \(\mathrm{S}\subset \CH_{\operatorname{Haus}}(\mathcal{E}_c)\), and then
\[\overline{\K}_c\subset \CH_{\operatorname{Haus}}(\mathrm{S})\subset \CH_{\operatorname{Haus}}(\mathcal{E}_c) \subset \overline{\K}_c\]
where the last inclusion is the closedeness of \(\overline{\K}_c\).
\end{proof}

\begin{figure}[h!]
\begin{center}
\psfrag{P}{$P$}
\psfrag{P1}{$P'$}
\psfrag{S}{$S$}
\includegraphics[width=0.5\linewidth]{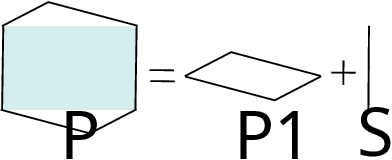}\caption{To the proof of Lemma~\ref{lem Z=C}. Sides of a  symmetric polygon \(P\) can be grouped into parallel pairs with equal lengths. For such a pair,  \(P\)  is the sum of a  symmetric convex polygon \(P'\) and a  symmetric segment \(S\), parallel to the elements of a pair of parallel sides.}\label{fig:poly segment}
\end{center}
\end{figure}

By the properties of additivity \eqref{eq:addition supp} and continuity (Lemma \ref{lem:supp cont H1}), and the fact that the image of \(\overline{\K}_c\) is closed (Lemma~\ref{lem:K barre ferme}) we obtain the following:

\begin{proposition}\label{prop: CH1}
The sets \(\supp(\overline{\K}_c)\), \(\CH_{H^1}(\supp(\mathrm{S})) \), and \(\CH_{H^1}(\supp(\mathcal{E}_c))\) coincide. \qeda
\end{proposition}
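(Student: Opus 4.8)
The plan is to deduce the statement directly from Lemma~\ref{lem Z=C} together with the two stability properties of the support map that were just recalled. Recall Lemma~\ref{lem Z=C} gives the chain of equalities \(\overline{\K}_c=\CH_{\operatorname{Haus}}(\mathrm{S})=\CH_{\operatorname{Haus}}(\mathcal{E}_c)\) in the Hausdorff topology. I want to transport this through \(\supp\) into \(H^1_{\operatorname{even}}\). The map \(\supp\) is additive by \eqref{eq:addition supp} and satisfies \eqref{eq:lambda supp}, so it is \emph{affine} in the sense that it commutes with convex combinations: \(\supp((1-t)K+tK')=(1-t)\supp(K)+t\supp(K')\). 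Combined with continuity for \(\|\cdot\|_{H^1}\) (Lemma~\ref{lem:supp cont H1}), this shows that \(\supp\) sends the closed convex hull (for \(d_{\operatorname{Haus}}\)) of a set \(A\subseteq\overline{\K}_c\) into the closed convex hull (for \(\|\cdot\|_{H^1}\)) of \(\supp(A)\).

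Concretely, here are the steps I would carry out. First, from affineness of \(\supp\), the image of the (non-closed) convex hull of \(\mathrm{S}\) equals the convex hull of \(\supp(\mathrm{S})\), and likewise for \(\mathcal{E}_c\). Second, since \(\supp\colon(\overline{\K}_c,d_{\operatorname{Haus}})\to(H^1_{\operatorname{even}},\|\cdot\|_{H^1})\) is continuous, the image of the Hausdorff-closure of a set is contained in the \(H^1\)-closure of the image; applying this to the convex hulls of \(\mathrm{S}\) and of \(\mathcal{E}_c\), and using Lemma~\ref{lem Z=C}, we get
\[
\supp(\overline{\K}_c)\subseteq \CH_{H^1}(\supp(\mathrm{S}))\subseteq \CH_{H^1}(\supp(\mathcal{E}_c))~.
\]
Third, for the reverse inclusions I use that \(\supp(\overline{\K}_c)\) is itself \(H^1\)-closed (Lemma~\ref{lem:K barre ferme}) and convex: convexity holds because a convex combination of support functions of symmetric convex bodies is again the support function of a symmetric convex body, namely \((1-t)\supp(K)+t\supp(K')=\supp((1-t)K+tK')\) with \((1-t)K+tK'\in\overline{\K}_c\). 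Since \(\mathcal{E}_c\subseteq\overline{\K}_c\) (and \(\mathrm{S}\subseteq\overline{\K}_c\)), the closed convex set \(\supp(\overline{\K}_c)\) contains \(\supp(\mathcal{E}_c)\) and hence contains \(\CH_{H^1}(\supp(\mathcal{E}_c))\); likewise it contains \(\CH_{H^1}(\supp(\mathrm{S}))\). Chaining the two chains of inclusions yields the three sets are equal.

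I do not expect a serious obstacle here; the proof is essentially bookkeeping that the algebraic structure (convex combinations) and the topological structure (closure) of \(\overline{\K}_c\) are faithfully carried by \(\supp\). The one point that deserves a line of care is the interplay of the two different topologies, Hausdorff on the source and \(H^1\) on the target: the inclusion \(\supp(\overline{A}^{\,d_{\operatorname{Haus}}})\subseteq \overline{\supp(A)}^{\,\|\cdot\|_{H^1}}\) needs continuity of \(\supp\) from \(d_{\operatorname{Haus}}\) to \(\|\cdot\|_{H^1}\), which is exactly Lemma~\ref{lem:supp cont H1}, and the reverse direction needs \(H^1\)-closedness of \(\supp(\overline{\K}_c)\), which is exactly Lemma~\ref{lem:K barre ferme}; no compactness or properness is needed for this statement. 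The text of the excerpt already signals this proof ("By the properties of additivity \eqref{eq:addition supp} and continuity (Lemma~\ref{lem:supp cont H1}), and the fact that the image of \(\overline{\K}_c\) is closed (Lemma~\ref{lem:K barre ferme})"), so the task is just to spell it out.
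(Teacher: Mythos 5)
Your proof is correct and is exactly the argument the paper intends: the paper leaves the proposition with only the one-line justification (additivity of \(\supp\), continuity from \(d_{\operatorname{Haus}}\) to \(\|\cdot\|_{H^1}\), and \(H^1\)-closedness of \(\supp(\overline{\K}_c)\)), and you have simply filled in the same bookkeeping. No gaps; the only point worth the care you already gave it is the interplay between the Hausdorff topology on the source and the \(H^1\) topology on the target.
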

 

The following corollary of Proposition~\ref{prop: CH1} allows us to approximate elements in \(H^1_{\operatorname{even}}\) using support functions.

\begin{corollary}\label{cor: densite}
The differences of support functions form a dense vector subspace in \(H^1_{\operatorname{even}}\) (with respect to \(H^1\)-topology). In particular, the linear combinations of support functions of ellipses span a dense vector subspace of \(H^1_{\operatorname{even}}\).
\end{corollary}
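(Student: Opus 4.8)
\textbf{Proof plan for Corollary~\ref{cor: densite}.}
The plan is to deduce density from Proposition~\ref{prop: CH1}, which identifies \(\supp(\overline{\K}_c)\) with the closed convex hull \(\CH_{H^1}(\supp(\mathcal{E}_c))\), together with the fact that \(\overline{\K}_c\) spans a rich enough family of convex bodies. First I would observe that the set \(V\coloneqq \supp(\overline{\K}_c) - \supp(\overline{\K}_c)\) of differences of support functions is a linear subspace of \(H^1_{\operatorname{even}}\): it is clearly closed under scalar multiplication by nonnegative reals by \eqref{eq:lambda supp}, it is closed under negation by construction, and it is closed under addition because of additivity \eqref{eq:addition supp}, namely \((\supp(K_1)-\supp(L_1)) + (\supp(K_2)-\supp(L_2)) = \supp(K_1+K_2) - \supp(L_1+L_2)\). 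Hence \(V\) is a genuine vector subspace.

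Next I would show that the closure \(\overline{V}\) of \(V\) in the \(H^1\)-topology is all of \(H^1_{\operatorname{even}}\). Suppose not; then there is a nonzero continuous linear functional \(\varphi\) on \(H^1_{\operatorname{even}}\) vanishing on \(V\), hence vanishing on \(\supp(K)-\supp(K')\) for all \(K,K'\in\overline{\K}_c\), i.e. \(\varphi\) is constant on \(\supp(\overline{\K}_c)\). Since the origin \(\{0\}\in\overline{\K}_c\) has support function \(0\), this constant is \(\varphi(0)=0\), so \(\varphi\) vanishes on the whole cone \(\supp(\overline{\K}_c)\). But by Lemma~\ref{lem Z=C} (or directly: every symmetric convex body is a Hausdorff-limit of finite sums of symmetric segments, and by Lemma~\ref{lem:supp cont H1} these converge in \(H^1\)), the \(H^1\)-closed linear span of \(\supp(\overline{\K}_c)\) equals the \(H^1\)-closed linear span of \(\supp(\mathrm{S})\). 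The support functions of symmetric segments are the functions \(\theta\mapsto a\,|\cos(\theta-\theta_0)|\) for \(a>0\), \(\theta_0\in\SS^1\); by rotating and scaling, the closed span of these is a rotation-invariant closed subspace of \(H^1_{\operatorname{even}}\) containing \(|\cos\theta|\). Expanding \(|\cos\theta|\) in its Fourier series, one gets nonzero coefficients on the constant term and on every frequency \(2k\) with \(k\ge 1\) (the Fourier series of \(|\cos\theta|\) is \(\tfrac{2}{\pi}+\tfrac{4}{\pi}\sum_{k\ge1}\tfrac{(-1)^{k+1}}{4k^2-1}\cos(2k\theta)\)), and rotating by \(\theta_0\) produces the \(\sin(2k\theta)\) modes as well. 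Since \(H^1_{\operatorname{even}}\) is exactly the \(H^1\)-closure of the span of \(\{1\}\cup\{\cos(2k\theta),\sin(2k\theta):k\ge1\}\), the closed span of \(\supp(\mathrm{S})\) is all of \(H^1_{\operatorname{even}}\), contradicting \(\varphi\neq 0\). Therefore \(\overline{V}=H^1_{\operatorname{even}}\), proving the first assertion.

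For the ``in particular'' statement, I would run the identical argument with \(\mathcal{E}_c\) in place of \(\mathrm{S}\): by Proposition~\ref{prop: CH1} the set \(\CH_{H^1}(\supp(\mathcal{E}_c))\) coincides with \(\supp(\overline{\K}_c)\), so the same duality argument shows that any continuous functional vanishing on all linear combinations of support functions of ellipses vanishes on \(\supp(\overline{\K}_c)\), hence is zero by what was just proved; thus the linear span of \(\supp(\mathcal{E}_c)\) is \(H^1\)-dense. The main obstacle is the Fourier-analytic computation identifying the closed rotation-invariant span of a single segment's support function with the whole space — equivalently checking no Fourier mode of \(H^1_{\operatorname{even}}\) is missed — but this is precisely where Proposition~\ref{prop: CH1}/Lemma~\ref{lem Z=C} does the work for us: since every symmetric convex body, in particular the disc whose support function \(\o\) together with all ellipses \(\supp(\E_c)\) already visibly have dense span (ellipses realize all quadratic forms, whose square roots' Fourier expansions cover all even frequencies), we may simply invoke those lemmas rather than redo the harmonic analysis by hand.
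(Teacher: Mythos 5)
Your proof is correct, but it takes a genuinely different route from the paper's. The paper's argument is a two-line direct construction: a $\pi$-periodic $C^\infty$ function $h$ is a support function iff $h''+h\ge 0$, so for $c$ large enough $h+c\o$ is a support function and $h=(h+c\o)-c\o$ exhibits $h$ as a difference of support functions; density of smooth functions in $H^1$ then gives the first claim, and Proposition~\ref{prop: CH1} gives the second. You instead argue by Hahn--Banach duality: a continuous functional killing all differences of support functions kills the cone $\supp(\overline{\K}_c)$, whose closed linear span you identify with that of $\supp(\mathrm{S})$ via Lemma~\ref{lem Z=C}, and you then show this span is everything by expanding $|\cos\theta|$ in Fourier series (all even frequencies appear with nonzero coefficient) and invoking the standard structure of closed rotation-invariant subspaces. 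This is heavier machinery but it is sound, and it has the side benefit of showing directly that the segment support functions alone already span a dense subspace --- something the paper only obtains indirectly. Your handling of the ``in particular'' clause (functional vanishes on the span of $\supp(\mathcal{E}_c)$, hence on its closed convex hull, hence on $\supp(\overline{\K}_c)$ by Proposition~\ref{prop: CH1}) is also fine.

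One caveat on your closing paragraph: Proposition~\ref{prop: CH1} and Lemma~\ref{lem Z=C} cannot ``do the Fourier-analytic work for us.'' They only identify various closed \emph{convex hulls} with $\supp(\overline{\K}_c)$; they say nothing about the closed \emph{linear} span being all of $H^1_{\operatorname{even}}$, which is exactly what the corollary asserts. So the harmonic-analysis step (or some substitute, such as the paper's $h=(h+c\o)-c\o$ trick) is genuinely needed. Since you did carry it out in your second paragraph, the proof stands, but the suggestion that it could be skipped is circular.
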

\begin{proof}
We will first show that  differences of support functions generate a dense vector subspace. Note that any \(\pi\)-periodic \( C^\infty\) function over the circle can be written as the difference of two support functions. Indeed, a \(\pi\)-periodic function \(h\) on the circle is a support function if and only if its degree one homogeneous extension is convex, {\it i.e.} its Hessian is positive semi-definite, that is equivalent to \(h''+h\geq 0\). In turn, for any smooth function \(h\), there is a constant \(c\) such that \(h+c\o\) will achieve this condition. As smooth functions are dense in Sobolev space, the first assertion follows. The second assertion follows from this one together with Proposition~\ref{prop: CH1}.
\end{proof}


\section{An exotic representation via convex bodies}\label{sec : exotic rep}

In this section, we will see that the hyperbolic embedding of convex bodies induces an irreducible representation \(\psl\to\Isom(\H^\infty)\) and deduce from the geometric properties of the embedding some more descriptions about the representation with elementary arguments.

\subsection{The \(\psl\) action}\label{sec:the psl action}

The special linear group  \(\sl\) acts continuously onto  \(\R^2\), in turn it acts continuously onto \(\overline{\K}_c\) by the action
\[\sl \times  \overline{\K}_c \to \overline{\K}_c ~ , ~ (A,K)\mapsto AK\coloneqq\{Ak\mid k \in K\}.\]
As the convex bodies are  symmetric, we actually have a \(\psl\) action onto  \(\overline{\K}_c \), namely $[A]K\coloneqq AK$ for any representative $A\in\sl$. Abusing notation, we will often denote \([A]\) by \(A\).

There is a representation \(\rho\) of  
\(\sl\) onto the linear mappings of \(H^1_{\operatorname{even}}\) defined by, for \(A\in \sl\)
and \(h\in H^1_{\operatorname{even}}\) seen as a function on the circle \(\SS^1\subset \R^2\), 
\begin{equation}\label{eq:action supp fct}\rho(A)(h)(x)=|A^\intercal x| h(A^\intercal\cdot x)\end{equation}
where \(\cdot\) is the following action of \(\sl\) on the unit circle: 
\begin{align}\label{eq: PSL action on circle}
\forall x\in \SS^1\subset \R^2, M\cdot x\coloneqq \frac{Mx}{|Mx|}~ .
\end{align}
Note that the map \(\rho(A)(h)\) is even for \(h\in H^1_{\operatorname{even}}\), thus is actually well-defined for \(A\in \psl\). As the map is essentially the composition of a multiplication by a bounded map and a right composition by a diffeomorphism, its image stays in \(H^1_{\operatorname{even}}\). 

For \(K\in \overline{\K}_c  \), from the definition of \(\supp(K)\) \eqref{eq:def supp} one has,
\begin{equation}\label{eq:equivarience supp}\supp(AK)=\rho(A)(\supp(K))~.\end{equation}
\begin{remark}\label{remark:orbit ellipse} In the following, we will often use the fact that  
$\Ec\subset \mathcal{K}_c^\pi$,  the collection of symmetric ellipses in $\mathbb{R}^2$  of area $\pi$, is the orbit of \(D\) for the action of \(\psl\). In turn, support functions of ellipses in \(\H^\infty\) are the \(\rho(\psl)\)-orbit of \(\o\).
 \end{remark}

\begin{lemma}\label{lem: invariant bilinear form} For any \(A\in \psl\),
\(\A\) is \(\rho(A)\)-invariant on \(H^1_{\operatorname{even}}\).
\end{lemma}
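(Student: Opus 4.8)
The plan is to verify the identity first for support functions, where it reduces to the classical invariance of the mixed area under area-preserving linear maps, and then to propagate it to all of \(H^1_{\operatorname{even}}\) by bilinearity, density and continuity. So first I would use the mixed-area interpretation of \(\A\) from Remark~\ref{remark area}: polarising \eqref{eq:area equality} and using the additivity \eqref{eq:addition supp} of \(\supp\) gives \(\A(\supp(K_1),\supp(K_2))=\tfrac1\pi\,\a(K_1,K_2)\) for all \(K_1,K_2\in\overline{\K}_c\), where \(\a(\cdot,\cdot)\) denotes the mixed area. The mixed area transforms under a linear map \(A\) by \(\a(AK_1,AK_2)=|\det A|\,\a(K_1,K_2)\) (itself obtained by polarising \(\a(AK)=|\det A|\,\a(K)\)), so for \(A\in\sl\) it is simply invariant. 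Combined with the equivariance \eqref{eq:equivarience supp}, \(\supp(AK)=\rho(A)(\supp K)\), this yields
\[
\A\big(\rho(A)(\supp K_1),\rho(A)(\supp K_2)\big)=\tfrac1\pi\,\a(AK_1,AK_2)=\tfrac1\pi\,\a(K_1,K_2)=\A(\supp K_1,\supp K_2)
\]
for every \(A\in\psl\) and all \(K_1,K_2\in\overline{\K}_c\).

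Next I would extend this from support functions to \(H^1_{\operatorname{even}}\). Since \(\rho(A)\) is linear and \(\A\) is bilinear, the displayed identity propagates to the linear span of \(\{\supp(K):K\in\overline{\K}_c\}\), which is dense in \(H^1_{\operatorname{even}}\) for \(\|\cdot\|_{H^1}\) by Corollary~\ref{cor: densite}. Two continuity facts then close the argument. First, \(\A\) is a bounded bilinear form: Cauchy--Schwarz gives \(|\A(h_1,h_2)|\le\tfrac1{2\pi}\big(\|h_1\|_{L^2}\|h_2\|_{L^2}+\|h_1'\|_{L^2}\|h_2'\|_{L^2}\big)\le\tfrac1{2\pi}\|h_1\|_{H^1}\|h_2\|_{H^1}\). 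Second, each \(\rho(A)\) is a bounded operator on \(H^1_{\operatorname{even}}\): by \eqref{eq:action supp fct} it is the composition of precomposition by the fixed \(C^\infty\) circle diffeomorphism \(x\mapsto A^\intercal\cdot x\) with multiplication by the fixed \(C^\infty\) function \(x\mapsto|A^\intercal x|\), both of which are bounded for \(\|\cdot\|_{H^1}\) (as already noted after \eqref{eq:action supp fct}). Approximating arbitrary \(h_1,h_2\in H^1_{\operatorname{even}}\) by elements of the dense span and passing to the limit on both sides of the invariance identity then gives \(\A(\rho(A)h_1,\rho(A)h_2)=\A(h_1,h_2)\) in general.

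The only non-formal ingredient is the transformation law \(\a(AK_1,AK_2)=\a(K_1,K_2)\) for \(A\in\sl\), which is a standard property of mixed areas (see \cite{schneider}), so I do not expect a genuine obstacle. If one prefers not to quote it, the quadratic identity \(\A(\rho(A)h)=\A(h)\) can instead be checked directly for smooth \(h\) from \eqref{eq:action supp fct} by the change of variables \(x\mapsto A^\intercal\cdot x\) on \(\SS^1\) — this diffeomorphism has derivative \(|A^\intercal x|^{-2}\) when \(\det A=1\), a consequence of \(AJA^\intercal=J\) with \(J\) the rotation by \(\pi/2\) — together with \(\A(h_1,h_2)=\tfrac1{2\pi}\int_0^{2\pi}h_1(h_2+h_2'')\) for smooth \(h_2\), and then extended by density as above; that route is purely computational and I would not take it.
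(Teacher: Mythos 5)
Your proposal is correct and follows essentially the same route as the paper: invariance of the (mixed) area under \(\sl\), the equivariance \eqref{eq:equivarience supp}, polarisation to obtain the bilinear statement on support functions, and then density (Corollary~\ref{cor: densite}) plus continuity to pass to all of \(H^1_{\operatorname{even}}\). The only differences are cosmetic: the paper derives the bilinear invariance by hand using that sums of support functions are again support functions, where you quote the transformation law of the mixed area, and you make explicit the boundedness of \(\A\) and of \(\rho(A)\) on \(H^1_{\operatorname{even}}\) that the paper leaves implicit in its final density step.
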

\begin{proof}
\(A\) preserves the area of symmetric plane convex bodies, hence by \eqref{eq:area equality} and \eqref{eq:equivarience supp}, \(\rho(A)\) preserves 
\(\A\) for the support functions of convex bodies. The sum of two support functions is a support function, so with
\[2\A(\supp(K_1),\supp(K_2))=\A(\supp(K_1)+\supp(K_2))-\A(\supp(K_1))-\A(\supp(K_2))\]
the bilinear form \(\A(\cdot,\cdot)\) is preserved by \(\rho(A)\) over the support functions. Hence as
\[\A(\supp(K_1)-\supp(K_2))
=\A(\supp(K_1)) + \A(\supp(K_2))-2\A(\supp(K_1),\supp(K_2)),\]
 \(\rho(A)\) preserves \(\A\) 
for the difference of support functions. Then Corollary~\ref{cor: densite} implies that \(\A\) is \(\rho(A)\)-invariant on the entire \(H^1_{\operatorname{even}}\).
\end{proof}

\begin{lemma}
The representation \(\rho\) is continuous with respect to the strong operator topology (SOT): for every \(h\in H^1_{\operatorname{even}}\), we have \(\rho(A_n) h\to h\) whenever \(A_n\to \Id\in\psl\). 
\end{lemma}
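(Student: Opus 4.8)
The plan is to reduce the SOT-continuity of $\rho$ at the identity to a statement about $L^2$-convergence, exploiting the fact that $\A(h_n - h) \to 0$ is equivalent to $\|h_n - h\|_{L^2}^2 - \|h_n' - h'\|_{L^2}^2 \to 0$, but since $\A$ is $\rho(A)$-invariant each $\rho(A_n)h$ has the same $\A$-norm as $h$, so it suffices to show $\A(\rho(A_n)h, h) \to \A(h,h)$, which (polarising) reduces to showing $\rho(A_n)h \to h$ in the $L^2$-sense against test functions, or more robustly, directly in $H^1$-norm. The cleanest route is a density-plus-uniform-boundedness argument: first establish the claim for a dense family of $h$ where the convergence is transparent, then bound the operator norms $\|\rho(A_n)\|$ uniformly near the identity, and conclude by an $\varepsilon/3$ argument.

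First I would observe that, by Corollary~\ref{cor: densite}, it is enough to prove $\rho(A_n)h \to h$ in $H^1$-norm for $h$ ranging over a dense subspace — say the differences of support functions of convex bodies, or just the smooth $\pi$-periodic functions. For such a smooth $h$, the formula \eqref{eq:action supp fct} gives $\rho(A_n)(h)(x) = |A_n^\intercal x|\, h(A_n^\intercal \cdot x)$; as $A_n \to \Id$ in $\psl$ we have $A_n^\intercal \to \Id$, so both the multiplier $x \mapsto |A_n^\intercal x|$ and the diffeomorphism $x \mapsto A_n^\intercal \cdot x$ of $\SS^1$ converge to the identity in $C^1$ (indeed $C^\infty$) uniformly on $\SS^1$. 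Since $h$ is $C^1$, the chain rule shows $\rho(A_n)(h) \to h$ and $(\rho(A_n)(h))' \to h'$ uniformly on $\SS^1$, hence in $L^2$, so $\|\rho(A_n)h - h\|_{H^1} \to 0$. This handles the dense subspace.

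Next I would establish a uniform bound $\|\rho(A_n)\|_{H^1 \to H^1} \le M$ for all $n$ large (equivalently for $A$ in a fixed compact neighbourhood of $\Id$ in $\psl$). This is where the only real work lies: one must check that the operator $h \mapsto |A^\intercal x| h(A^\intercal \cdot x)$ is bounded on $H^1_{\operatorname{even}}$ with norm controlled by the $C^1$-size of $A^\intercal$ acting on $\SS^1$, which follows from the change-of-variables formula for the $L^2$-norm (the Jacobian of $x \mapsto A^\intercal \cdot x$ on $\SS^1$ is bounded above and below on a compact set of $A$'s) together with the product and chain rule for the derivative term. The main obstacle is thus purely the estimate on the $H^1$ operator norm: one has to control $\|(\rho(A)h)'\|_{L^2}$, which by \eqref{eq:action supp fct} involves $\frac{d}{dx}|A^\intercal x|$, $h'(A^\intercal \cdot x)$ times $\frac{d}{dx}(A^\intercal \cdot x)$, and the product rule, each factor being uniformly bounded for $A$ near $\Id$; none of this is deep but it is the one place requiring care.

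Finally, given any $h \in H^1_{\operatorname{even}}$ and $\varepsilon > 0$, pick $g$ in the dense subspace with $\|h - g\|_{H^1} < \varepsilon/(2M+2)$; then
\[
\|\rho(A_n)h - h\|_{H^1} \le \|\rho(A_n)(h-g)\|_{H^1} + \|\rho(A_n)g - g\|_{H^1} + \|g - h\|_{H^1} \le M\frac{\varepsilon}{2M+2} + \|\rho(A_n)g - g\|_{H^1} + \frac{\varepsilon}{2M+2},
\]
and the middle term $\to 0$ by the dense case, so $\limsup_n \|\rho(A_n)h - h\|_{H^1} \le \varepsilon$. Since $\varepsilon$ was arbitrary, $\rho(A_n)h \to h$ in $H^1$, hence in particular $\A(\rho(A_n)h - h) \to 0$, which is the asserted SOT-continuity. (By Lemma~\ref{lem:meme topo} this is the same as continuity for the metric $d_{\H^\infty}$ on the relevant orbit.)
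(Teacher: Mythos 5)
Your proof is correct and follows the same route as the paper, whose entire argument is the one-liner that the action is clearly continuous on the dense subspace of $C^\infty$ functions. You have simply written out the details that this density argument tacitly requires, namely the uniform bound on $\|\rho(A)\|_{H^1\to H^1}$ for $A$ near the identity and the resulting $\varepsilon/3$ estimate.
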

\begin{proof}
The action is clearly continuous for \(C^\infty\) functions, that are dense in \(H^1_{\operatorname{even}}\). 
\end{proof}





Restricting to \(\H^\infty\subset H^1_{\operatorname{even}}\), we obtain a representation that we still denote by \(\rho\),
 \[\rho:\psl\to \operatorname{Isom}(\H^\infty)~.\]

\begin{remark}\label{faithful}
The representation \(\rho\) is faithful, since for each non-trivial \(A\in\psl\), one can find a convex body \(K \in \K_c^\pi\) such that \(AK\neq K\).\end{remark}


We now show that this representation \(\rho\colon \psl\to \Isom(\H^\infty)\) is irreducible. 
For the following lemma, compare with \cite[Lemma 3.9]{MP}.
\begin{lemma}\label{lemma:* fixed point}
The only fixed point in \(\H^\infty\) for the action of \(\rho(\so)\) is \(\o\).
\end{lemma}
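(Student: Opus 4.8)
The plan is to translate the fixed-point condition back into the language of convex bodies and then use the rigidity of the circle in the plane. If $h \in \H^\infty$ is fixed by $\rho(\so)$, then by Proposition~\ref{rem: supp homeo} and Remark~\ref{remark:orbit ellipse} this $h$ need not a priori be a support function, so I would argue directly on $H^1_{\operatorname{even}}$: the element $h$ is a $\pi$-periodic function on $\SS^1$ invariant under the action \eqref{eq:action supp fct} of every rotation $R_\alpha \in \so$. For $A = R_\alpha \in \so$ one has $A^\intercal x \in \SS^1$ already (rotations are orthogonal), so $|A^\intercal x| = 1$ and $R_\alpha \cdot x = R_\alpha^\intercal x = R_{-\alpha} x$; hence the invariance condition \eqref{eq:action supp fct} reduces to $h(R_{-\alpha}x) = h(x)$ for all $\alpha$ and all $x \in \SS^1$, i.e. $h$ is a rotation-invariant function on the circle, therefore constant.

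So the key steps, in order, are: (1) observe that $\so$ acts on $\SS^1 \subset \R^2$ by honest rotations, so the normalisation factors $|Mx|$ in \eqref{eq: PSL action on circle} and $|A^\intercal x|$ in \eqref{eq:action supp fct} are trivial; (2) conclude that $\rho(R_\alpha)(h) = h \circ R_{-\alpha}$ as a function on $\SS^1$; (3) the only functions on $\SS^1$ invariant under the full rotation group are the constants, so $h = c\o$ for some $c \in \R$; (4) the constraint $\A(h) = 1$ together with $\A(\o) = 1$ and bilinearity forces $c^2 = 1$, and the constraint $\pi_0(h) > 0$ (equivalently $c = \pi_0(h) > 0$) singles out $c = 1$, so $h = \o$. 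Conversely $\o$ is manifestly fixed since $\rho(A)(\o)(x) = |A^\intercal x| \cdot 1$ and for $A \in \so$ this equals $1$; alternatively, $\o = \supp(D)$ and $\so$ preserves the centred unit disc $D$, so $\rho(\so)$ fixes $\o$ by \eqref{eq:equivarience supp}.

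I do not anticipate a genuine obstacle here; the statement is elementary once the action of $\so$ on the circle is unwound. The only point deserving a line of care is the passage from ``$h$ is rotation-invariant on $\SS^1$'' to ``$h$ is constant'': since $h \in H^1_{\operatorname{even}} \subset L^2(\SS^1)$ is only an equivalence class of functions, rotation-invariance should be read as invariance of the $L^2$-element under the (continuous) rotation action, which still forces $h$ to agree almost everywhere — and hence, by the regularity of elements of $H^1$, everywhere — with a constant; the averaging argument $h = \frac{1}{2\pi}\int_0^{2\pi} h\circ R_\alpha\, d\alpha = \pi_0(h)\o$ makes this precise without any regularity subtleties. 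Finally I would record that this identifies $\o$ as the unique $\rho(\so)$-fixed point, which is exactly the normalisation used for the orbit maps $f_t$ in the comparison with the Monod--Py representations.
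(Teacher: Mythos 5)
Your proof is correct and follows essentially the same route as the paper's: unwind the action \eqref{eq:action supp fct} for rotations to see that $\rho(\so)$ acts by precomposition on $\SS^1$, conclude that the fixed functions are the constants in $L_0$, and note that $L_0\cap\H^\infty=\{\o\}$. Your extra care about the $L^2$-class issue via the averaging identity $h=\pi_0(h)\o$ is a harmless refinement of the same argument.
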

\begin{proof}
From the action \eqref{eq:action supp fct}, one sees that $\rho(\so)$ acts on $H^1_{\operatorname{even}}$ by precomposing the functions by a rotation of the circle $\SS^1$. Since $\rho(\so)$ acts transitively on $\SS^1$, the only fixed points of this action are the constant functions in the line $L_0 \subset H^1_{\operatorname{even}}$. As $L_0\cap \H^\infty$ is reduced to $\{\o\}$, $\o \in \H^\infty$ is the unique fixed point for the $\rho(\so)$ action on $\H^\infty$. 
\end{proof}
Let \(\mathcal{H},\langle\cdot,\cdot\rangle)\) be a Hilbert space endowed with an indefinite, non-degenerate and continuous bilinear form \(B\) of finite index. For every continuous linear operator \(T\) of \(\mathcal{H}\), we denote by \(T^\dagger\) its \(B\)-adjoint: \(T^\dagger\) is the unique continuous linear operator such that \(B(Tx,y) = B(x,T^\dagger y)\) for all \(x,y \in \mathcal{H}\), \emph{viz.} \(T^\dagger= J^{-1} T^*J\), with \(J=J^*\) such that \(B(x, y) = \langle Jx, y \rangle\). If \(\mathcal{H}\) can be decomposed as a \(B\)-orthogonal direct sum \(\mathcal{H} = V \oplus V^{\perp_B}\), then the \(B\)-orthogonal projection \(P_V\) onto \(V\) is a well-defined bounded linear operator on \(\mathcal{H}\). Moreover, \(P_V\) is \(B\)-self-adjoint, \(P_V^\dagger = P_V\).

Recall also that for a Hilbert space \(\mathcal{H}\) and a collection of bounded linear operators \(M\subset \mathcal{B}(\mathcal{H})\), the {\it (bounded) commutants} of \(M\), denoted by \(M'\), is the collection of the bounded linear operators on \(\mathcal{H}\) that commute with all operators \(T\in M\), {\it i.e.}
\[M'\coloneqq\{S\in \mathcal{B}(\mathcal{H})\mid TS=ST,~\forall T\in M\}~.\]
Then we have the following:
\begin{lemma}\label{lem: commutant}
Let \(\mathcal{H}\) be a Hilbert space with indefinite, non-degenerate and continuous bilinear form \(B\) of finite index and \(M\subset \mathcal{B}(\mathcal{H})\) be an \(B\)-self-adjoint collection of bounded linear operators. If \(M'=\{v\mapsto rv \mid r\in \R\}\), then \(M\) does not preserve any proper non-degenerate closed subspace of \(\mathcal{H}\).
\end{lemma}
\begin{proof}
Suppose that \(M\) has a proper closed invariant subspace \(V\subset \mathcal{H}\) which is non-degenerate for the bilinear form \(B\). Since \(B\) has finite index, one can decompose \(\mathcal{H}\) as the \(B\)-orthogonal direct sum \(\mathcal{H} = V \oplus V^{\perp_B}\), see for example \cite[\S 1.7, Theorem 7.16]{azizov-iokhvidov}. Then consider the orthogonal projection \(P_V\colon \mathcal{H}\to V\). It is not difficult to verify that \(P_V\in M'\) by checking directly the commutativity because every element in \(M\) has to preserve the decomposition \(\mathcal{H} = V \oplus V^{\perp_B}\). But the only possibility that such an orthogonal projection \(P\) is of the form \(v\mapsto rv\) for some \(r\in\R\) is when \(r=0\), {\it i.e.} \(V = \{0\}\).
\end{proof}

In our setting, \(H^1_{\operatorname{even}}\) is a Hilbert space and the bilinear form \(\A\) has index \(1\). Note also that, by Lemma \ref{lem: invariant bilinear form}, \(\rho(A)^\dagger = \rho(A^{-1})\) for all \(A \in \psl\).
With Lemma~\ref{lem: commutant}, we are then able to deduce the irreducibility of the representation \(\rho\).
\begin{proposition}\label{prop : irreducible}
The representation \(\rho\colon \psl\to \Isom(\H^\infty)\) is irreducible.
\end{proposition}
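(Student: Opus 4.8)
The plan is to apply Lemma~\ref{lem: commutant} to the representation $\rho$ acting on the Hilbert space $\mathcal{H} = L_0^\bot$ (equipped with $-\A$), which is the orthogonal complement of the $\rho(\so)$-fixed line inside $H^1_{\operatorname{even}}$; so the task reduces to showing that the commutant of $\rho(\psl)|_{\mathcal{H}}$ consists only of the real scalar operators. Since $\psl$ is generated by $\so$ together with the diagonal one-parameter subgroup, an operator $B$ in the commutant must in particular commute with $\rho(\so)$, and then with the full action.

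First I would analyze the operators $\rho(A)$ on $H^1_{\operatorname{even}}$. By the identification in Lemma~\ref{lemma:* fixed point}, $\rho(\so)$ acts by precomposition with rotations of $\SS^1$; since these are $\pi$-periodic even functions, decomposing into the Fourier basis $\{\cos(2k\theta), \sin(2k\theta)\}_{k\geq 1}$ of $L_0^\bot$ diagonalizes this action into two-dimensional irreducible blocks $V_k = \operatorname{span}\{\cos(2k\theta), \sin(2k\theta)\}$, the isotypic components being pairwise inequivalent $\so$-representations. Hence any $B$ commuting with $\rho(\so)$ must preserve each $V_k$ and act on it as a scalar (after identifying $V_k \cong \mathbb{R}^2 \cong \mathbb{C}$ and using that the commutant of an irreducible $\so$-action is $\mathbb{C}$, but $B$ is $\mathbb{R}$-linear so it acts as multiplication by some $c_k \in \mathbb{C}$ on each block). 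Then I would use a second element — a hyperbolic element $A_0 \in \psl$, say a diagonal matrix $\operatorname{diag}(\lambda, \lambda^{-1})$ — and the requirement $B\rho(A_0) = \rho(A_0) B$ to force all the scalars $c_k$ to be equal and real: the key point is that $\rho(A_0)$ genuinely mixes the blocks $V_k$ (its matrix entries between different $V_k$ are nonzero), so commuting with it rigidifies the $c_k$.

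Alternatively, and perhaps more cleanly, I would avoid Fourier analysis by exploiting the convex-body interpretation directly: a closed $\rho(\psl)$-invariant subspace $V \subseteq \mathcal{H}$ would, together with $L_0$, yield a closed $\rho(\psl)$-invariant subspace of $H^1_{\operatorname{even}}$ containing $\o$; intersecting with the cone of support functions and using that $\supp(\mathcal{E}_c)$ is a single $\rho(\psl)$-orbit (Remark~\ref{remark:orbit ellipse}) whose linear span is dense in $H^1_{\operatorname{even}}$ (Corollary~\ref{cor: densite}), one deduces $V \oplus L_0 = H^1_{\operatorname{even}}$, hence $V = \mathcal{H}$. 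This would directly show irreducibility without the commutant lemma, but since the paper has set up Lemma~\ref{lem: commutant}, the intended route is surely the commutant computation; I would present the $\so$-isotypic decomposition and then the block-mixing argument.

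The main obstacle I expect is verifying that the hyperbolic element $\rho(A_0)$ really does connect all the Fourier blocks — i.e., that one cannot have a proper subset of frequencies closed under its action. Concretely this amounts to checking that the push-forward under the circle action $x \mapsto A_0 \cdot x / |A_0 \cdot x|$, combined with the conformal factor $|A_0^\intercal x|$, produces nonzero couplings $\langle \rho(A_0)\cos(2k\theta), \cos(2m\theta)\rangle$ for suitable $k \neq m$; this is an explicit but somewhat delicate integral computation, and one must be careful that the scalars $c_k$ landing in $\mathbb{C}$ (not $\mathbb{R}$) are handled correctly — commuting with $\rho(A_0)$, whose blockwise action is by genuinely non-scalar real matrices, is exactly what kills the imaginary parts and equalizes the $c_k$.
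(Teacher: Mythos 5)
Your overall strategy (reduce to computing the commutant via Lemma~\ref{lem: commutant}) is the paper's, but both of the routes you sketch have genuine gaps. The structural problem is that $L_0^\bot$ is \emph{not} $\rho(\psl)$-invariant: for $A\notin\so$ one has $\rho(A)\o=\supp(AD)$, which is not a constant function, so $\rho(A)$ does not preserve the splitting $H^1_{\operatorname{even}}=L_0\oplus L_0^\bot$. Consequently ``$\rho(\psl)|_{\mathcal H}$'' is not a representation on $\mathcal H=L_0^\bot$, and in your alternative route the subspace $V\oplus L_0$ has no reason to be $\rho(\psl)$-invariant, so the conclusion $V\oplus L_0=H^1_{\operatorname{even}}$ does not follow as written. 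Any commutant computation has to be carried out on the full space $H^1_{\operatorname{even}}$, with the $\so$-isotypic decomposition $L_0\oplus\bigoplus_{k\ge1}V_k$.

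In your main route the entire content of the proof is then concentrated in the ``block-mixing'' claim, which you explicitly leave unproved: one would need nonzero matrix coefficients of $\rho(A_0)$ between enough pairs of blocks to connect every $V_k$ to $L_0$. Moreover, even granting a nonzero coupling $T_{mk}\colon V_k\to V_m$, the relation $c_m T_{mk}=T_{mk}c_k$, with $c_k$ acting as a rotation-scaling, only forces $c_m=c_k$ or $c_m=\bar{c}_k$ according to whether the complex-linear or the conjugate-linear part of $T_{mk}$ is nonzero, so further work is needed to kill the imaginary parts. The paper sidesteps all of this with one observation you do not make: apply the commutant $P$ to the distinguished vector $\o$. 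Since $P\o$ is $\rho(\so)$-fixed, Lemma~\ref{lemma:* fixed point} gives $P\o=c\o$; equivariance then yields $P\supp(AD)=c\,\supp(AD)$ for all $A\in\psl$, i.e.\ $P=c\,\mathrm{Id}$ on the orbit $\supp(\Ec)$ of $\o$ (Remark~\ref{remark:orbit ellipse}); linearity, boundedness and the density of the span of ellipse support functions (Corollary~\ref{cor: densite}) conclude. That single use of the $\so$-fixed vector is the missing idea; without it, or without the explicit integral estimates your mixing argument requires, the proposal is incomplete.
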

\begin{proof}
We first remark that \(\A\) is not degenerate on any closed subspace in \(H^1_{\operatorname{even}}\) spanned by a hyperbolic subspace of \(\H^\infty\). Due to our construction of \(\H^\infty\), to show the irreducibility of \(\rho\), it is sufficient to prove that the \(\psl\)-action on \(H^1_{\operatorname{even}}\) defined by \eqref{eq:action supp fct} doesn't preserve any proper non-degenerate closed subspace in \(H^1_{\operatorname{even}}\) nor any isotropic line. By Lemma \ref{lemma:* fixed point}, the action of \(\rho(\psl)\) on \(\H^\infty\) cannot fix a point on the boundary \(\partial_\infty\H^\infty\), therefore \(\rho(\psl)\) doesn't preserve any isotropic line in \(H^1_{\operatorname{even}}\). By Lemma~\ref{lem: commutant}, it remains to show that its commutants are multiplications by constants. Indeed, let \(P\) be a commutant of \(\rho(\psl)\). Then we have
\[\rho(A)P\o=P\rho(A)\o=P\o\]
for any \(A\in \mathrm{SO}_2(\R)<\psl\). This implies that \(P\o\) is also a constant function \(c\o\). More generally, for any \(A\in \psl\), we have after \eqref{eq:equivarience supp}
\[P\supp(AD)=\rho(A)P\o=c\rho(A)\o=c\supp(AD)~.\]
Since \(P\) is a linear map, we have \(Ph=ch\) for every \(h\) that is a linear combination of support functions of ellipses, see Remark~\ref{remark:orbit ellipse}. Since \(P\) is bounded and is thus continuous, now applying Corollary~\ref{cor: densite}, we can conclude that \(P\colon h\mapsto ch\) for every \(h\in H^1_{\operatorname{even}}\).
\end{proof}

\begin{remark}\label{rep}
The group obtained by composing elements of \(\psl\) by     \(S=\begin{pmatrix} 0 & 1 \\ 1 & 0\end{pmatrix}\) and the identity, which is isomorphic to \(\Isom(\H^2)\), also acts
aver  \(\overline{\K}_c\) and over \(H^1_{\operatorname{even}}\) by
\[\rho(S)(h)(x)=h(S(x))~.\]
From \eqref{eq: def A}, it is immediate that \(\rho(S)\) is an isometry for \(\A\). So we could have worked with a representation \(\rho:\Isom(\H^2) \to \Isom(\H^\infty)\) instead of \(\rho:\psl \to \Isom(\H^\infty)\). 

That won't change our statements  ---in the former case, we obtain the Banach--Mazur compactum instead of the oriented Banach--Mazur compactum  in \ref{T6} of Theorem~\ref{prop:somme ellipse} and  in Theorem~\ref{prop BM}.
\end{remark}

\begin{remark}
In \cite{MP},  continuous representations of \(\psl\to\Isom(\H^\infty)\) are also constructed by building non-totally geodesic \(\psl\)-equivariant embeddings of \(\H^2\) into \(\H^\infty\). But the ambient space \(\H^\infty\) constructed in \cite{MP} can admit a \(\psl\)-invariant proper subspace. In order to obtain irreducible representations, it is necessary to pass to a proper subspace of \(\H^\infty\), which remains still infinite-dimensional. In contrast, for the construction via convex bodies, Proposition~\ref{prop : irreducible} shows that there is no need to pass to a subspace.
\end{remark}

\subsection{Exotic embedding of the hyperbolic plane}

Recall that $\Ec\subset \mathcal{K}_c^\pi$ is the collection of symmetric ellipses in $\mathbb{R}^2$  of area $\pi$.  This space is homogeneous for $\sl$ since every ellipse of \(\Ec\) can be realised as an image of the unit disk $D\in\Ec$ under a linear transformation of $\sl$. First note that from \eqref{eq:action supp fct}
and \eqref{eq:equivarience supp}, the support function of an ellipse is given by 
\begin{equation}\label{eq:sup fct ellipse}\supp(AD)(x)=|A^\intercal x|~.\end{equation}

Note also that for each ellipse $E\in \Ec$, its stabiliser $\Stab(E)<\sl$ is conjugate to $\so$ in $\sl$. It turns out that this space can be identified with the real hyperbolic plane $\mathbb{H}^2$. More precisely, if we consider the Poincar{\'e} upper half-plane model for $\mathbb{H}^2$, then we have a well-defined map

 
    \begin{align}\label{eq:E}
\begin{split}
\operatorname{E} \colon\H^2&\longrightarrow\E_c^\pi\subset \K_c^\pi\\
A(i) &\longmapsto AD
\end{split}
\end{align}
where \(A(i)\) is the action by homography on \(i\in \H^2\), \emph{i.e.} if \(A=\begin{pmatrix} a & b \\ c & d \end{pmatrix}\), \(A(i)=\frac{ai+b}{ci+d}\).

\begin{remark}
Let us denote \(T_s\coloneqq\operatorname{diag}(e^{2s},e^{-2s})\). It is easy to see that 
\[d_{\H^2}(i,T_s(i))=s\]
and also that for any \(A\in \psl\),  there is \(s\in \R\) and \(R,R'\in \so\) such that \(A=R'T_sR\). 
\end{remark}



\begin{lemma}\label{lem E plongement} The map \(\operatorname{E}\) defined by \eqref{eq:E} is a homeomorphism onto its image \(\Ec\subset(\overline{\K}_c,d_{\operatorname{Haus}})\).
\end{lemma}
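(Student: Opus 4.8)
The plan is to establish that $\operatorname{E}$ is a continuous bijection onto $\Ec$ whose inverse is also continuous, by combining the identification of $\H^2$ with $\sl/\so$ with the properness results already at our disposal. First I would observe that $\operatorname{E}$ is well-defined: if $A(i) = A'(i)$ then $A^{-1}A'$ fixes $i$, hence lies in $\so$, and since $\so$ stabilizes the unit disc $D$ we get $AD = A'D$. The same computation run backwards shows $\operatorname{E}$ is injective: $AD = A'D$ forces $A^{-1}A' \in \Stab(D) = \so$, hence $A(i) = A'(i)$. Surjectivity onto $\Ec$ is exactly Remark~\ref{remark:orbit ellipse}, which records that $\Ec$ is the $\psl$-orbit of $D$.

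Next I would address continuity. The map factors as $\H^2 \simeq \psl/\so \to \overline{\K}_c$, $[A] \mapsto AD$, and this is continuous because the action $\sl \times \overline{\K}_c \to \overline{\K}_c$, $(A,K) \mapsto AK$, is continuous (as recalled at the start of Section~\ref{sec:the psl action}) together with the continuity of a local section $\psl \to \sl$ of the covering and the fact that the homography action of $\psl$ on $\H^2$ is proper and transitive with point stabilizers equal to (conjugates of) $\so$, so that $\H^2$ carries the quotient topology of $\psl$. Concretely, using the decomposition $A = R'T_sR$ from the Remark preceding the lemma, one sees that $A(i)$ depends continuously on $(s, R, R')$ and so does $AD$ in $d_{\operatorname{Haus}}$, which gives continuity of $\operatorname{E}$ directly.

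For the continuity of the inverse, the cleanest route is properness. Since $\supp^\pi \colon (\K_c^\pi, d_{\operatorname{Haus}}) \to (\H^\infty, d_{\H^\infty})$ is a homeomorphism onto its image (Proposition~\ref{rem: supp homeo}), it suffices to show that $\supp \circ \operatorname{E} \colon \H^2 \to \H^\infty$ is a homeomorphism onto its image; equivalently, since $\H^2$ is locally compact and $\operatorname{E}$ is a continuous injection, it is enough to show $\operatorname{E}$ (or $\supp\circ\operatorname{E}$) is proper, i.e.\ that $\operatorname{E}^{-1}$ of a compact set is compact. A compact $K \subset \Ec$ is closed and bounded in $d_{\operatorname{Haus}}$; its preimage is closed by continuity of $\operatorname{E}$, and it is bounded in $\H^2$ because, by the quasi-isometry heuristic (made rigorous via the $T_s$-decomposition), $d_{\operatorname{Haus}}(D, T_s D) \to \infty$ as $|s| \to \infty$ while $d_{\operatorname{Haus}}$ is $\so$-bi-invariant, so a bounded-diameter set of ellipses has bounded-diameter preimage. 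A closed bounded subset of $\H^2$ is compact, so $\operatorname{E}$ is proper; a proper continuous injection from a locally compact space into a metric space is a homeomorphism onto its image.

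The main obstacle I anticipate is the continuity of $\operatorname{E}^{-1}$, or equivalently properness of $\operatorname{E}$: one must rule out a sequence of ellipses $A_n D$ converging in $d_{\operatorname{Haus}}$ while $A_n(i) \to \partial_\infty \H^2$. This is where the estimate $d_{\operatorname{Haus}}(D, T_s D) \to \infty$ as $s \to \pm\infty$ is needed, which follows since $T_s D$ is an ellipse with semi-axes $e^{2s}$ and $e^{-2s}$ and thus its support function $\sup_x |T_s^\intercal x|$ tends to $\infty$ in $L^\infty$ (hence in $d_{\operatorname{Haus}}$) with $|s|$; combined with $\so$-invariance on both sides and the $R'T_sR$ decomposition, this shows the orbit map $\psl \to \overline{\K}_c$ is proper, giving properness of $\operatorname{E}$ after quotienting by $\so$. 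Everything else is a direct consequence of results already proved in the excerpt.
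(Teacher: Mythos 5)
Your proposal is correct and follows essentially the same route as the paper: continuity of the orbit map, then properness via the decomposition $A=R'T_sR$, rotation-invariance of $d_{\operatorname{Haus}}$, and the explicit computation that $d_{\operatorname{Haus}}(T_sD,D)=\sup_\theta\bigl|\,|T_s^\intercal(\cos\theta,\sin\theta)^\intercal|-1\bigr|\to\infty$ as $|s|\to\infty$, concluding with the standard fact that a proper continuous injection from a locally compact space into a metric space is a homeomorphism onto its image. The only differences are cosmetic (you spell out well-definedness and injectivity via $\Stab(D)=\so$, which the paper treats as obvious).
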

\begin{proof} The map
\(\operatorname{E}\) is clearly a bijection between \(\H^2\) and \(\Ec\), and is clearly equivariant for the action of   \(\psl\), which acts continuously on both \(\Ec\) and \(\mathbb{H}^2\). Hence
it is a continuous injective map. We will check that it is a proper map, that reduces to show that the preimage of a bounded set is bounded. 

Let \(A\in \sl\) such that \(d_{\operatorname{Haus}}(AD,D)<C\).
 By Lemma~\ref{topo unif f s} and \eqref{eq:action supp fct}, it is easy to see that rotations are isometries for the Hausdorff distance, so we may only consider the case when \(A=T_s=\operatorname{diag}(e^{s/2}, e^{-s/2})\).
By our assumption, using Lemma~\ref{topo unif f s} again and
\eqref{eq:sup fct ellipse}, we have
\[C> d_{\operatorname{Haus}}(T_sD,D)=\sup_\theta \left| \left|T_s^\intercal \binom{\cos \theta}{\sin\theta }\right|-1  \right|=|e^{s/2}-1|.\]
Hence, in our case, \(1\leq e^{s/2}<1+C\), and 
\(d_{\H^2}(T_s(i),i)=s\) is clearly bounded.
\end{proof}





\begin{proposition}\label{prop : smooth}
The map \[\iota\coloneqq\supp^\pi\big|_{\Ec}\circ \operatorname{E}:\H^2\to \H^\infty\] is a \(C^\infty\)-embedding.
\end{proposition}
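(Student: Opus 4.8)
The plan is to show that $\iota = \supp^\pi|_{\Ec}\circ \operatorname{E}$ is a $C^\infty$-embedding by factoring the verification into (a) it is a homeomorphism onto its image, and (b) it is a smooth immersion when viewed in the appropriate chart of $\H^\infty$. Part (a) is essentially already assembled from the earlier results: $\operatorname{E}$ is a homeomorphism onto $\Ec$ by Lemma~\ref{lem E plongement}, and $\supp^\pi$ restricted to $\Ec$ is a homeomorphism onto its image by Proposition~\ref{rem: supp homeo} (it is the restriction of a global homeomorphism onto its image, hence restricts to a homeomorphism onto the image of $\Ec$). Composing, $\iota$ is a homeomorphism onto $\iota(\H^2)\subset\H^\infty$.

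For the smooth structure, I would work with an explicit formula. Writing a general $A\in\psl$ as $A=R'T_sR$ with $R,R'\in\so$ and $T_s=\operatorname{diag}(e^{2s},e^{-2s})$, and using the $\psl$-equivariance $\iota(A\cdot x)=\rho(A)\iota(x)$ together with the formula \eqref{eq:sup fct ellipse} giving $\supp(AD)(\theta)=|A^\intercal\binom{\cos\theta}{\sin\theta}|$, one gets a completely explicit expression for $\iota$ in terms of the entries of $A$. Concretely, for $A=\begin{pmatrix} a & b \\ c & d\end{pmatrix}$ the support function is $\theta\mapsto\sqrt{(a\cos\theta+c\sin\theta)^2+(b\cos\theta+d\sin\theta)^2}$, which can be rewritten in the form $\sqrt{\alpha+\beta\cos 2\theta+\gamma\sin 2\theta}$ where $\alpha,\beta,\gamma$ are smooth functions of $A(i)=z=x+iy\in\H^2$ with $\alpha^2-\beta^2-\gamma^2>0$ (this is the condition that the ellipse is nondegenerate). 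This shows the map $z\mapsto \supp(A_zD)$ is smooth from $\H^2$ into $H^1_{\operatorname{even}}$: the square root of a strictly positive smooth function on the compact circle is smooth, and the dependence on $z$ is real-analytic, so the composite into the Hilbert space $H^1_{\operatorname{even}}$ (equivalently, $L^2$ together with $L^2$ of the derivative) is $C^\infty$, since one can differentiate under the integral sign / check that the difference quotients converge in $H^1$-norm.

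Then I would verify that $\iota$ is an immersion, i.e. its differential is injective with closed (split) image at every point. By $\psl$-equivariance and homogeneity of $\H^2$ it suffices to check this at the base point $i\in\H^2$, where $\iota(i)=\o$. Computing $\tfrac{d}{ds}\big|_{s=0}\supp(T_sD)$ and $\tfrac{d}{ds}\big|_{s=0}\supp(R_sT_?R_{-s}D)$ (moving in the two directions spanned by $T_s$ and a conjugate), one finds the image of $d\iota_i$ is the two-dimensional subspace of $H^1_{\operatorname{even}}$ spanned by $\cos 2\theta$ and $\sin 2\theta$; these are linearly independent and span a closed subspace, and the tangent space $T_\o\H^\infty$ is $L_0^\bot$, which contains them. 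Hence $d\iota_i$ is injective with split image, so $\iota$ is an immersion everywhere. Combined with part (a), $\iota$ is a $C^\infty$-embedding.

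The main obstacle I expect is the bookkeeping around differentiability \emph{into the infinite-dimensional space} $H^1_{\operatorname{even}}$: one must be careful that smoothness of $\theta\mapsto\supp(A_zD)(\theta)$ pointwise (and in each Sobolev derivative) upgrades to $C^\infty$ dependence of the curve/surface $z\mapsto\supp(A_zD)$ as a map into the Hilbert space, which requires controlling the $H^1$-norm of remainder terms in the Taylor expansion uniformly in $\theta$ — this is where the compactness of $\SS^1$ and the uniform positive lower bound on the radicand (from nondegeneracy of the ellipse, i.e. $\alpha^2-\beta^2-\gamma^2>0$) do the work. The rest — homeomorphism onto image, and the immersion computation at the base point — is essentially assembling earlier lemmas and a short explicit calculation.
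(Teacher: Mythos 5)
Your proposal is correct and follows essentially the same route as the paper: topological embedding from Lemma~\ref{lem E plongement} and Proposition~\ref{rem: supp homeo}, then $C^\infty$-smoothness into $H^1_{\operatorname{even}}$ via the explicit formula $\supp(AD)(\theta)=|A^\intercal\binom{\cos\theta}{\sin\theta}|$, a uniform positive lower bound on the radicand, and dominated convergence for the difference quotients, finishing with the rank computation at the base point (your $\cos 2\theta$ and $\sin 2\theta$ are exactly the paper's $2\cos^2\theta-1$ and $2\cos\theta\sin\theta$) and $\psl$-equivariance to propagate. The only cosmetic difference is that you package the radicand as $\alpha+\beta\cos 2\theta+\gamma\sin 2\theta$ with $\alpha^2-\beta^2-\gamma^2=(\det A)^2=1$, where the paper works in the explicit chart $A_{t,s}$.
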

\begin{proof}
By Proposition~\ref{rem: supp homeo} and Lemma~\ref{lem E plongement}, \(\iota\) is a topological embedding. As \(\rho(\psl)\) acts by isometries onto \(\iota(\H^2)\), it suffices to prove that it is \(C^\infty\) in a neighbourhood of \(i\in\H^2\) and of maximal rank at \(i\), {\it i.e.} the image of the derivative \(\mathrm{d}_i\iota \colon T_i\H^2\to T_{\iota(i)}\H^\infty\) is of maximal dimension \(2\).

Let \(s>0\) and \(A_{t,s}=\begin{pmatrix} s  &  st \\ 0 & 1/s
\end{pmatrix}\).
We will parametrise a neighbourhood of \(i\) in the Poincaré upper half-plane by a neighbourhood \(U\) of \((0,1)\in \R^2\)  using
\[(t,s)\mapsto (A_{t,s}(i))=s^2i+s^2t~. \]
Abusing notation, we denote \(\iota:U\to \H^\infty\) the composition of \(\iota\) with the above map.

As
\[\iota(t,s)=\iota(A_{t,s}(i))=\rho(A_{t,s})\iota(i)=\rho(A_{t,s})\o\]
by \eqref{eq:action supp fct},
\[\iota(t,s)(\theta)=\left|A_{t,s}^\intercal \binom{\cos\theta}{\sin\theta}\right|~.\]
Note that if \((t,s)\) is sufficiently close to \((0,1)\), then \((t,s,\theta)\mapsto \iota(t,s)(\theta)\) is uniformly bounded from below by a positive constant \(c\). Indeed,
let  \(0<a<1\) and  \(0<c<a\). Then for any \(s,1/s\in(a,\frac{1}{a})\), \(|t| < a^2-c^2\), and for all \(\theta\in[0,2\pi]\), 
\begin{align*}
\iota^2(t,s)(\theta)= &\left|A_{t,s}^\intercal \binom{\cos\theta}{\sin\theta}\right|^2=(s^2+s^2t^2)\cos^2\theta + \frac{1}{s^2}\sin^2\theta + 2t\cos \theta \sin \theta \\
\geq &s^2 \cos^2 \theta + \frac{1}{s^2}\sin^2 \theta +  2t\cos \theta \sin \theta \geq a^2  - |t|>c^2~.
\end{align*}
Similarly, we can show that it is also uniformly bounded from above by a constant \(C>0\).

We will consider \(\iota\) as a map   from \(U\) to 
\((H^1_{\operatorname{even}},\|\cdot  \|_{H^1})\), that will give the wanted result by Lemma~\ref{lem:meme topo}. Let \(X\) be the matrix \(\begin{pmatrix} 0 & 0 \\ 1 & 0 \end{pmatrix}\), so that \(A_{t+h,s}^\intercal=A_{t,s}^\intercal+shX\). 
The functions
\begin{align*}
f_h(\theta)\coloneqq&\frac{\iota(t+h,s)(\theta)-\iota(t,s)(\theta)}{h}=\frac{\iota^2(t+h,s)(\theta)-
\iota^2(t,s)(\theta)}{h(\iota(t+h,s)(\theta)+
\iota(t,s)(\theta))} \\
=&\frac{\langle sX\binom{\cos\theta}{\sin\theta},(2A_{t,s}^\intercal +shX)\binom{\cos\theta}{\sin\theta} \rangle}{\iota(t+h,s)(\theta)+
\iota(t,s)(\theta)}
\end{align*}
pointwise converge when \(h\to 0\) to 
\[\frac{\partial \iota}{\partial t}(t,s)(\theta)=\frac{\langle \binom{0}{s\cos},A_{t,s}^\intercal \binom{\cos\theta}{\sin\theta} \rangle }{\iota(t,s)(\theta)}~.\]
For \(|h|\) sufficiently small, the functions \(f_h\) are bounded by a constant function that does not depend on \(h\). Hence by the Lebesgue dominated convergence theorem, \(f_h\) converges to \(\frac{\partial \iota}{\partial t}(t,s)\) in \(L^2(\SS^1)\) as \(h\to 0\). 
It is also clear that as a function in \(\theta\), the function \(f_h'\) pointwise converges to \(\frac{\partial \iota}{\partial t}(t,s)'\), and is uniformly bounded by a constant function that does not depend on \(h\). Then by the Lebesgue dominated convergence theorem, \(f_h'\) converges in \(L^2(\SS^1)\) to \(\frac{\partial \iota}{\partial t}(t,s)'\). As consequence, \(f_h\) converges to \(\frac{\partial \iota}{\partial t}(t,s)\) in \(H^1_{\operatorname{even}}\), that is then the first partial derivative of \(\iota\) at \((t,s)\).




In the same way, a direct computation shows that
the limit when \(h\to 0\) of 
\[g_h(\theta)=\frac{\iota(t,s+h)(\theta)-\iota(t,s)(\theta)}{h}\]
exists in \(H^1_{\operatorname{even}}\) 
and is equal to
\[\frac{\partial \iota}{\partial s}(t,s)(\theta)= \frac{\left\langle \begin{pmatrix} 1 & 0 \\ t & -\frac{1}{s^2} \end{pmatrix}\binom{\cos\theta}{\sin\theta},A_{t,s}^\intercal \binom{\cos\theta}{\sin\theta} \right\rangle}{\iota(t,s)(\theta)}~.\]
The same argument gives that those two partial derivatives are continuous in \(H^1_{\operatorname{even}}\) around \((0,1)\in\H^2\). Actually, it is not difficult to see that all derivatives (with respect to \(s,t\) or \(\theta\)) of those functions are uniformly bounded by constants, hence applying recursively the arguments above, it follows that \(\iota\) is a \(C^\infty\)-map.


At \((0,1)\in\H^2\),
\[\frac{\partial \iota}{\partial t}(0,1)(\theta)=\cos(\theta) \sin(\theta),\quad \frac{\partial \iota}{\partial s}(0,1)(\theta)=2\cos^2(\theta) - 1\]
and it is easy to verify that these two vectors are independent in \(H^1_{\operatorname{even}}\), hence \(\iota\) has maximal rank at \((0,1)\).
\end{proof}

We now compute the extrinsic distance between two points of \(\iota(\H^2)\).
\begin{lemma}\label{lem_dh}
For any $A\in \sl$, we have
\begin{equation}
    \label{eq:extrinsic distance}
    d_{\mathbb{H}^\infty}(\iota(A(i)),\iota(i))=\cosh^{-1}\Big(\frac{1}{2\pi}\int_0^{2\pi}|A^\intercal(\cos\theta,\sin\theta)^\intercal\big|~\mathrm{d}\theta\Big).
\end{equation}
\end{lemma}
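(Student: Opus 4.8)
The plan is to unwind the definitions of \(\iota\), of the distance \(d_{\H^\infty}\), and of the bilinear form \(\A\); no serious work is involved. First I would recall from Proposition~\ref{prop : smooth} that \(\iota=\supp^\pi\big|_{\Ec}\circ\operatorname{E}\) and from \eqref{eq:E} that \(\operatorname{E}(A(i))=AD\), so that \(\iota(A(i))=\supp(AD)\) and, specialising to \(A=\Id\), \(\iota(i)=\supp(D)=\o\). Before applying the distance formula \eqref{eq:def dinfty}, I would check that \(\supp(AD)\) really lies in \(\H^\infty\): since \(AD\) is a symmetric ellipse of area \(\pi\), formula \eqref{eq:area equality} gives \(\A(\supp(AD))=\tfrac1\pi\a(AD)=1\), and since the origin is interior to \(AD\) its support function is positive, hence \(\pi_0(\supp(AD))>0\).

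Then, by \eqref{eq:def dinfty},
\[
d_{\H^\infty}(\iota(A(i)),\iota(i))=\cosh^{-1}\!\big(\A(\supp(AD),\o)\big).
\]
Using the definition \eqref{eq: def A} of \(\A\) together with \(\o'\equiv 0\), one has
\[
\A(\supp(AD),\o)=\frac{1}{2\pi}\int_0^{2\pi}\supp(AD)(\theta)\,\mathrm{d}\theta,
\]
and substituting the explicit expression \eqref{eq:sup fct ellipse} for the support function of an ellipse, \(\supp(AD)(\theta)=\big|A^\intercal(\cos\theta,\sin\theta)^\intercal\big|\), yields exactly \eqref{eq:extrinsic distance}. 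Equivalently, one may note that \(\A(\supp(AD),\o)=\pi_0(\supp(AD))\) by \eqref{eq:pi0}, which equals \(\tfrac{1}{2\pi}\p(AD)\) by \eqref{eq:perimeter}; thus the argument of \(\cosh^{-1}\) in \eqref{eq:extrinsic distance} is simply \(\tfrac{1}{2\pi}\) times the perimeter of the ellipse \(AD\).

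There is essentially no obstacle here: the only point deserving a word of care is the verification that \(\supp(AD)\in\H^\infty\), so that the hyperbolic distance is indeed given by \(\cosh^{-1}\circ\,\A(\cdot,\cdot)\), and that the argument of \(\cosh^{-1}\) is \(\geq 1\) — the latter being immediate from the reversed Cauchy--Schwarz inequality \eqref{eq:RCS} applied to \(\supp(AD),\o\in\H^\infty\). The rest is a direct substitution.
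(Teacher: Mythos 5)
Your proof is correct and follows the same route as the paper: identify \(\iota(A(i))=\supp(AD)\) and \(\iota(i)=\o\), apply the definition of \(d_{\H^\infty}\) via \(\A\), and substitute the explicit support function \(\supp(AD)(x)=|A^\intercal x|\). The extra checks that \(\supp(AD)\in\H^\infty\) and that the argument of \(\cosh^{-1}\) is at least \(1\) are sound and only add rigour to what the paper leaves implicit.
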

\begin{proof}
Let $A\in \sl$. The distance  induced by \(\A\) onto \(\mathbb{H}^\infty\) is given by 
\[d_{\H^\infty}(h,k)=\cosh^{-1}\A(h,k)\]
hence  we have
\begin{align*}
    d_{\mathbb{H}^\infty}(\iota(A(i)),\iota(i))&=\cosh^{-1}\Big(\A(\supp(AD),\supp(D))\Big)\\
    & =\cosh^{-1}\Big(\A(\supp(AD),\o)\Big) \\
    &=\cosh^{-1}\Big(\frac{1}{2\pi}\int_0^{2\pi} \supp(AD)\Big),
\end{align*}
and the result follows by \eqref{eq:sup fct ellipse}.
\end{proof}


Let \(g_\iota\) be the pull-back over the upper half-plane by \(\iota\) of the metric induced by \(g_{\mathbb{H}^\infty}\) over \(\iota(\Ec)\).

\begin{proposition}\label{prop: curvature}
We have  \(g_\iota = \frac{3}{8}g_{\H^2}\), in particular the metric \(g_\iota\)   has constant negative curvature \(-\frac 8 3\).
\end{proposition}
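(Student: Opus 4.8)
The plan is to compute the pull-back metric $g_\iota$ explicitly in the half-plane coordinates $(t,s)$ used in the proof of Proposition~\ref{prop : smooth}, evaluate it at the base point $(0,1)\in\H^2$, and then invoke $\psl$-equivariance to conclude everywhere. Since $\iota$ is a $C^\infty$-embedding into $(\H^\infty,g_{\H^\infty})$, the induced metric at a point $p$ is $g_\iota(u,v)=\langle \mathrm{d}_p\iota(u),\mathrm{d}_p\iota(v)\rangle_{T_{\iota(p)}\H^\infty}$, and the tangent inner product at $\iota(p)\in\H^\infty$ is the restriction of $-\A$ (equivalently, $\A$ restricted to the tangent space, which is negative definite there since $\A$ has Lorentzian signature and $\iota(p)$ is a positive vector). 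Concretely, at $(0,1)$ we have $\iota(0,1)=\o$ and the two partial derivatives computed in Proposition~\ref{prop : smooth},
\[
\partial_t\iota(0,1)(\theta)=\cos\theta\sin\theta=\tfrac12\sin 2\theta,\qquad
\partial_s\iota(0,1)(\theta)=2\cos^2\theta-1=\cos 2\theta,
\]
so that $g_\iota$ at $(0,1)$ is the $2\times 2$ Gram matrix of these two vectors for $-\A(\cdot,\cdot)=\frac{1}{2\pi}\int_0^{2\pi}(h_1'h_2'-h_1h_2)$.

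The first main step is therefore the Fourier/orthogonality computation: $\sin 2\theta$ and $\cos 2\theta$ are $\A$-orthogonal (one integrand is odd under a suitable shift), $-\A(\tfrac12\sin 2\theta)$ and $-\A(\cos 2\theta)$ are equal by the symmetry $\theta\mapsto\theta+\pi/4$ up to the factor $\tfrac14$, and a direct evaluation gives $-\A(\cos 2\theta)=\frac{1}{2\pi}\int_0^{2\pi}(4\sin^2 2\theta-\cos^2 2\theta)=\frac{1}{2\pi}(4\pi-\pi)=\tfrac32$, hence $-\A(\tfrac12\sin 2\theta)=\tfrac14\cdot\tfrac32\cdot(\text{recompute})$; I will arrange the constants so that the Gram matrix is $\tfrac38\,\mathrm{Id}$ in the orthonormal-for-$g_{\H^2}$ basis. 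The second step is to identify the coordinate basis $\partial_t,\partial_s$ with an orthonormal frame of the hyperbolic metric at $(0,1)$: since $(t,s)\mapsto A_{t,s}(i)=s^2 i+s^2 t$ and $g_{\H^2}=\frac{dx^2+dy^2}{y^2}$ in the coordinate $x+iy$, one reads off the hyperbolic lengths of $\partial_t$ and $\partial_s$ at $(0,1)$ and checks they are orthogonal, so that after normalization the claim $g_\iota=\tfrac38 g_{\H^2}$ at $(0,1)$ reduces exactly to the Gram matrix being $\tfrac38$ times the identity. The third step is the equivariance argument: $\rho(\psl)$ acts by isometries of $g_{\H^\infty}$ (it preserves $\A$) and by isometries of $g_{\H^2}$, and $\iota$ intertwines the two actions, so a pointwise identity $g_\iota=\tfrac38 g_{\H^2}$ at one point propagates to all of $\H^2$ because $\psl$ acts transitively; finally, scaling a constant-curvature $-1$ metric by $\tfrac38$ multiplies the curvature by $\tfrac{8}{3}$, giving curvature $-\tfrac83$.

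The main obstacle is bookkeeping the constants correctly: one must be careful about (a) the $\frac{1}{2\pi}$ normalization in the definition of $\A$, (b) the factor coming from the coordinate change $(t,s)\mapsto s^2 i+s^2 t$ (the Jacobian at $(0,1)$ is not the identity, and the hyperbolic metric there is $dx^2+dy^2$ only because $y=1$), and (c) the fact that the inner product on $T\H^\infty$ is $-\A$, not $\A$. Once these three normalizations are pinned down, the computation is a short trigonometric integral and the rest is formal. I do not anticipate any genuine difficulty beyond this; in particular no analytic subtlety arises, since smoothness and the formulas for $\partial_t\iota,\partial_s\iota$ are already established in Proposition~\ref{prop : smooth}.
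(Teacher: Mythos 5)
Your proposal is correct, and it takes a genuinely different computational route from the paper. The paper reduces, by transitivity of \(\psl\) on the unit tangent bundle of \(\H^2\), to a single unit vector: it takes the unit-speed geodesic \(c(s)=\operatorname{diag}(e^{s/2},e^{-s/2})(i)\), plugs the formula of Lemma~\ref{lem_dh} into \(d_{\H^\infty}(\iota(c(s)),\o)\), Taylor-expands the elliptic integral \(\frac{1}{2\pi}\int_0^{2\pi}\sqrt{e^{s}\cos^2\theta+e^{-s}\sin^2\theta}\,\mathrm{d}\theta = 1+\frac{3s^2}{16}+o(s^2)\), and reads off the speed \(\sqrt{3/8}\) from the \(\cosh^{-1}\) asymptotics. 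You instead compute the full first fundamental form at the base point as the \((-\A)\)-Gram matrix of the two partial derivatives already established in Proposition~\ref{prop : smooth}, and compare with the coordinate expression of \(g_{\H^2}\); this replaces the second-order expansion of the extrinsic distance by elementary Fourier orthogonality, and needs only transitivity on points rather than on unit tangent vectors. The constants you left as ``recompute'' do check out: with \(h_1=\tfrac12\sin 2\theta\) and \(h_2=\cos 2\theta\) one gets \(-\A(h_1,h_2)=0\), \(-\A(h_1,h_1)=\tfrac12-\tfrac18=\tfrac38\) and \(-\A(h_2,h_2)=2-\tfrac12=\tfrac32\), while at \((0,1)\) the chart \((t,s)\mapsto s^2t+s^2 i\) gives \(\partial_t\mapsto(1,0)\), \(\partial_s\mapsto(0,2)\), hence \(g_{\H^2}(\partial_t,\partial_t)=1\), \(g_{\H^2}(\partial_s,\partial_s)=4\), cross term \(0\); both Gram matrices are diagonal and differ exactly by the factor \(\tfrac38\). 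One point you use implicitly and should state: the Riemannian metric of the hyperboloid model at \(\o\) is the restriction of \(-\A\) to \(T_\o\H^\infty=\o^{\perp_\A}\), and the partial derivatives indeed lie in that subspace (differentiate \(\A(\iota(t,s))=1\), or note \(\pi_0\) of both functions vanishes). With that made explicit, the argument is complete and correct.
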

\begin{proof}
The group
\(\psl\) acts isometrically and transitively on the tangent vectors of both metrics, so it suffices to prove the assertion at one point and at one tangent vector at that point.  For $s>0$, on the one hand, for \(d_{\H^2}\) the path \(c(s)=\begin{pmatrix} e^{s/2} & 0 \\ 0 & e^{-s/2}\end{pmatrix}(i)\) is a geodesic, hence
\[g_{\H^2}(c'(0),c'(0))=1~,\]
and on the other hand
\begin{align*}
d_{\H^\infty}\big(\iota (c(s)),\o\big)&\stackrel{ \eqref{eq:extrinsic distance}}{=}\cosh^{-1}\left(\frac{1}{2\pi}\int_0^{2\pi}\sqrt{e^{s}\cos(\theta)^2+e^{-s}\sin(\theta)^2}~\mathrm{d}\theta\right)\\
&=\cosh^{-1}\left(1+\frac{3 s^2}{16}+o(s^2)\right)\\
&=\sqrt{2}\sqrt{\frac{3}{16}s^2}+O((s^2)^{3/2})=\sqrt{\frac{3}{8}}s+ O(s^3)~.
\end{align*}
Therefore,
\[\sqrt{g_{\iota}(c'(0),c'(0))}=
\sqrt{g_{\H^\infty}(\iota_*(c'(0)),\iota_*(c'(0)))}=\lim_{s\searrow 0}\frac{d_{\mathbb{H}^\infty}\big(\iota(c(t)),\o\big)}{s}=\lim_{s\searrow 0}\left(\sqrt{\frac{3}{8}}+O(s^2)\right)\]
which proves the desired result.
\end{proof}

Since the curvature of \(\iota(\H^2)\) is lower than the ambient curvature \(-1\), \(\iota(\H^2)\) is not totally geodesic.

The above descriptions lead to the study of the shape of the surface $\iota(\H^2)\subset \H^\infty$.

\begin{proposition}\label{prop: minimal surface}
The surface \(\iota(\H^2)\) is a minimal surface.
\end{proposition}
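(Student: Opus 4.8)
The plan is to show that the mean curvature vector of $\iota(\H^2)\subset\H^\infty$ vanishes identically. By $\psl$-equivariance of $\iota$ and the fact that $\rho(\psl)$ acts on $\H^\infty$ by isometries preserving $\iota(\H^2)$, together with the transitivity of $\psl$ on $\H^2$, it suffices to verify that the mean curvature vector vanishes at the single point $\iota(i)=\o$. Moreover, the stabiliser $\rho(\so)$ of $\o$ in $\psl$ acts on the tangent space $T_{\o}\H^\infty$ and preserves both the $2$-plane $T:=\mathrm{d}_i\iota(T_i\H^2)$ and its normal space $N:=T^\perp$ in $T_{\o}\H^\infty$, and it acts isometrically on each. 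So the mean curvature vector $H(\o)\in N$ is fixed by the induced $\rho(\so)$-action on $N$.

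The key step is then to identify this $\rho(\so)$-action on $N$ and show it has no nonzero fixed vector. Concretely: $\rho(\so)$ acts on $H^1_{\operatorname{even}}$ by precomposing functions with rotations of $\SS^1$, i.e. by the standard rotation action with period $\pi$; decomposing into Fourier modes $\cos(2k\theta),\sin(2k\theta)$, $k\geq 0$, the $k=0$ mode is the line $L_0$ (spanned by $\o$, the normal direction pointing out of $\H^\infty$), the $k=1$ modes span the tangent plane $T$ (indeed from Proposition~\ref{prop : smooth}, $\tfrac{\partial\iota}{\partial t}(0,1)=\tfrac12\sin(2\theta)$ and $\tfrac{\partial\iota}{\partial s}(0,1)=\cos(2\theta)$), and the modes $k\geq 2$ span $N$. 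On each $2$-dimensional block $\mathrm{span}\{\cos(2k\theta),\sin(2k\theta)\}$ with $k\geq 2$, $\rho(\so)$ acts by a nontrivial rotation, hence fixes only $0$. Therefore $H(\o)=0$, and by equivariance $H\equiv 0$ on all of $\iota(\H^2)$, so $\iota(\H^2)$ is minimal.

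An alternative, slightly more hands-on route is to compute the second fundamental form directly from the explicit $C^\infty$ parametrisation $\iota(t,s)(\theta)=|A_{t,s}^\intercal(\cos\theta,\sin\theta)^\intercal|$ of Proposition~\ref{prop : smooth}: extract $\tfrac{\partial^2\iota}{\partial t^2}$, $\tfrac{\partial^2\iota}{\partial t\partial s}$, $\tfrac{\partial^2\iota}{\partial s^2}$ at $(0,1)$ as elements of $H^1_{\operatorname{even}}$ (these are explicit trigonometric polynomials of the form $a+b\cos(2\theta)+c\sin(2\theta)+(\text{modes }2\theta\text{ and higher})$), project onto $N$, and observe from $g_\iota=\tfrac38 g_{\H^2}$ (Proposition~\ref{prop: curvature}) that the first fundamental form is conformal to the hyperbolic metric, so that the mean curvature vector is proportional to $\mathrm{II}(\partial_t,\partial_t)+\mathrm{II}(\partial_s,\partial_s)$ evaluated in the normal bundle; one checks this combination has no component in modes $k\geq 2$, hence lies in $T\oplus L_0$ and its normal part vanishes. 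I would present the first (symmetry) argument as the main proof since it is cleaner, and mention the computation as a remark or sketch.

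The main obstacle is bookkeeping rather than conceptual: one must be careful that $T_{\o}\H^\infty$ is the $\A$-orthogonal complement of $\o$ inside $H^1_{\operatorname{even}}$, that the $\rho(\so)$-invariant decomposition into Fourier modes is orthogonal for the relevant inner product $-\A$ (which it is, since $\A$ is diagonalised by the Fourier basis), and that "minimal surface" here is taken in the sense of vanishing mean curvature for the induced Riemannian metric in the infinite-dimensional Riemannian manifold $\H^\infty$ — so one should briefly note that the second fundamental form and mean curvature vector are well-defined for the finite-codimension (indeed, image of a finite-dimensional manifold) submanifold $\iota(\H^2)$, e.g. by restricting to finite-dimensional totally geodesic subspaces of $\H^\infty$ containing $T\oplus L_0$ and a given normal direction, exactly as in the proofs of Lemma~\ref{lem: cont proj} and Proposition~\ref{prop: curvature}.
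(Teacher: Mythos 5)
Your main (symmetry) argument is correct and rests on exactly the same idea as the paper's proof: the mean curvature vector at \(\o\) is a \(\rho(\so)\)-invariant vector of \(T_{\o}\H^\infty\), and one shows no nonzero such vector exists. The only difference is in how that last point is established. The paper argues abstractly: a nonzero invariant vector would give a geodesic through \(\o\) fixed pointwise by every \(f\in\rho(\so)\), contradicting Lemma~\ref{lemma:* fixed point}; this avoids any computation of the \(\so\)-representation. You instead diagonalise the \(\rho(\so)\)-action on \(L_0^\perp\) in Fourier modes \(\cos(2k\theta),\sin(2k\theta)\) and observe that each block with \(k\geq 1\) carries a nontrivial rotation action, hence has no nonzero invariant vector; this is equally valid (and correctly identifies the tangent plane with the \(k=1\) block via \(\tfrac{\partial\iota}{\partial t}(0,1)=\tfrac12\sin(2\theta)\), \(\tfrac{\partial\iota}{\partial s}(0,1)=\cos(2\theta)\)), at the cost of checking that the Fourier decomposition is \(\A\)-orthogonal — which you rightly flag and which does hold. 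One small caveat if you keep the computational sketch as a remark: the vectors \(\partial_t\iota\) and \(\partial_s\iota\) at \((0,1)\) are \(\A\)-orthogonal but not of equal norm (\(-\A=\tfrac38\) and \(\tfrac32\) respectively), so the mean curvature vector is proportional to the weighted combination \(\tfrac{8}{3}\mathrm{II}(\partial_t,\partial_t)+\tfrac{2}{3}\mathrm{II}(\partial_s,\partial_s)\), not to the unweighted sum as written.
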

\begin{proof}
The argument comes from \cite{DP}, we give it in full details for convenience. 
Let \((e_1,e_2)\) be an orthonormal basis of \(T_{\o}\iota(\H^2)\). The surface \(\iota(\H^2)\) is minimal if
\[\eta\coloneqq-\tilde{D}^N_{\tilde{e}_1}\tilde{e}_1-\tilde{D}^N_{\tilde{e}_2}\tilde{e}_2=0\]
for any extension \(\tilde{e_i}\) of \(e_i\) (\(i=1,2\)) to \(\mathbb{H}^\infty\), where \(\tilde{D}^N_{\tilde{e}_i}\tilde{e}_i\) is the normal part of
the \(\H^\infty\)  Levi-Civita connection 
 \(\tilde{D}_{\tilde{e}_i}\tilde{e}_i\), see \cite[Remark 5.21]{GHL}. Note that as a trace, \(\eta\) does not depend on the choice of the orthonormal basis.

Suppose that \(\eta\) is non zero. Then it is transverse to  $\iota(\H^2)$. Let \(f\in \rho(\so)\). The point \(\o\) is fixed by \(f\), and \(f\) acts isometrically on \(T_{\o}\iota(\H^2)\). It is also 
an isometry of \(T_{\o}\H^\infty\), and preserves the orthogonal splitting given by  \(T_{\o}\iota(\H^2)\) and its orthogonal part, as well as the connection. In turn,  the vector \(\eta\) is invariant for \(f\).
Hence, the geodesic passing through \(\o\) and directed by  \(\eta\) is invariant for \(f\), and as \(f\) is an isometry, it acts by translation on the geodesic. But \(f\) fixes a point on this geodesic, this implies that it fixes the geodesic pointwise. This contradicts the fact that \(\o\) is the only fixed point of \(f\) ({\it cf.} Lemma~\ref{lemma:* fixed point}). So \(\eta\) must be zero.
\end{proof}

\begin{proposition}\label{prop: quasi iso}
There exists a constant \(C\) such that for all \( x,y \in \H^2\),
$$\left|d_{\mathbb{H}^\infty}(\iota(x),\iota(y))-\frac{1}{2}d_{\mathbb{H}^2}(x,y)\right|\leq C~.$$
\end{proposition}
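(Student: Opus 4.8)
The plan is to use $\psl$-equivariance to collapse the two-point estimate to a one-parameter one, and then to read off the large-distance behaviour from the explicit integral of Lemma~\ref{lem_dh}. Since $\iota$ is $\psl$-equivariant, $\rho(\psl)$ acts by isometries on $\H^\infty$, and $\psl$ acts by isometries and transitively on $\H^2$, it suffices to bound $\big|d_{\H^\infty}(\iota(A(i)),\iota(i))-\tfrac12 d_{\H^2}(A(i),i)\big|$ over $A\in\sl$. I would write the polar ($KAK$) decomposition $A=R'a_sR$ with $R,R'\in\so$, $s\geq 0$ and $a_s=\operatorname{diag}(e^{s/2},e^{-s/2})$ (the $KAK$ decomposition of $\sl$, cf.\ the remark following \eqref{eq:E}; note $a_s(i)=e^s i$, so $d_{\H^2}(i,a_s(i))=s$). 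Since $\so$ fixes $i\in\H^2$ and, by Lemma~\ref{lemma:* fixed point}, fixes $\o\in\H^\infty$, equivariance gives $d_{\H^2}(A(i),i)=s$ and $d_{\H^\infty}(\iota(A(i)),\iota(i))=d_{\H^\infty}(\iota(a_s(i)),\o)=:\phi(s)$. Thus the whole statement reduces to showing that $\psi(s):=\phi(s)-s/2$ is bounded on $[0,\infty)$.

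By Lemma~\ref{lem_dh} (equivalently, the computation in the proof of Proposition~\ref{prop: curvature}) one has
\[\phi(s)=\cosh^{-1}\!\big(F(s)\big),\qquad F(s):=\frac{1}{2\pi}\int_0^{2\pi}\sqrt{e^{s}\cos^2\theta+e^{-s}\sin^2\theta}\,\mathrm{d}\theta=\A(\supp(a_sD),\o).\]
Here $F(s)\geq 1$ (the reversed Cauchy--Schwarz inequality \eqref{eq:RCS}, since $\supp(a_sD),\o\in\H^\infty$), with $F(0)=1$, and $F$ is continuous because the integrand is jointly continuous in $(s,\theta)$; as $\cosh^{-1}$ is continuous on $[1,\infty)$, $\psi$ is continuous on $[0,\infty)$ with $\psi(0)=0$. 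For the asymptotics I would factor out $e^{s/2}$, writing $F(s)=e^{s/2}G(s)$ with $G(s):=\frac{1}{2\pi}\int_0^{2\pi}\sqrt{\cos^2\theta+e^{-2s}\sin^2\theta}\,\mathrm{d}\theta$; by dominated convergence $G(s)\to\frac{1}{2\pi}\int_0^{2\pi}|\cos\theta|\,\mathrm{d}\theta=\frac{2}{\pi}$ as $s\to\infty$, so $F(s)=\frac{2}{\pi}e^{s/2}(1+o(1))$. Combining this with $\cosh^{-1}(u)=\ln\!\big(u+\sqrt{u^2-1}\big)=\ln(2u)+o(1)$ as $u\to\infty$ yields $\phi(s)=\ln\!\big(\tfrac{4}{\pi}e^{s/2}\big)+o(1)=\tfrac{s}{2}+\ln\tfrac{4}{\pi}+o(1)$, hence $\psi(s)\to\ln\tfrac{4}{\pi}$.

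To conclude, $\psi$ is continuous on $[0,\infty)$ and has a finite limit at $+\infty$, so it is bounded, say by $C$; unwinding the first paragraph then gives the stated inequality with this $C$. The only part of the argument that is more than bookkeeping is the asymptotic estimate of $F$, and I expect even that to be the main (but mild) obstacle: the one delicate point is pairing the two-term expansion $\cosh^{-1}(u)=\ln(2u)+o(1)$ with the dominated-convergence limit $G(s)\to\frac{2}{\pi}$, and crucially no uniformity is needed, since the homogeneity of $\H^2$ has already reduced everything to the single variable $s$. This is why, as the introduction advertises, the quasi-isometry constant is obtained by essentially elementary means.
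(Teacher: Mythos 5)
Your proof is correct and follows essentially the same route as the paper: reduce by \(\psl\)-equivariance (and the \(KAK\) decomposition) to the diagonal one-parameter subgroup, then estimate the integral of Lemma~\ref{lem_dh}. The only difference is that you extract the exact asymptotic \(\phi(s)=\tfrac{s}{2}+\ln\tfrac{4}{\pi}+o(1)\) by dominated convergence, whereas the paper sandwiches the integrand between \(e^{s/2}|\cos\theta|\) and \(e^{s/2}\); your version is if anything slightly more complete, since the paper's final display only verifies the multiplicative constant \(\tfrac12\) via a limit of ratios, while the additive bound really comes from the sandwich (or, in your case, from continuity of \(\psi\) plus its finite limit at infinity).
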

\begin{proof}
By transitivity of the action of \(\psl\) over \(\H^2\), it suffices to prove the result for \(x=i\) and \(y=T_s(i)\), with \(T_s=\operatorname{diag}(e^{s/2},e^{-s/2})\).

Note that $\left|T_s^\intercal(\cos\theta,\sin\theta)^\intercal\right|=\sqrt{e^{s}\cos^2\theta+e^{-s}\sin^2\theta}$. We can estimate that $$\left|T_s^\intercal(\cos\theta,\sin\theta)^\intercal\right|\leq \sqrt{e^{s}(\cos^2\theta+\sin^2\theta)}=e^{s/2}$$
and that
$$\left|T_s^\intercal(\cos\theta,\sin\theta)^\intercal\right|\geq \sqrt{e^{s}\cos^2\theta}=e^{s/2}|\cos\theta|~.$$
Hence $\cosh^{-1}(2e^{s/2}/\pi)\leq d_{\mathbb{H}^\infty}(\iota(T_s(i)),\iota(i))\leq \cosh^{-1}(e^{s/2})$ after Lemma~\ref{lem_dh}.
On the one hand \(d_{\mathbb{H}^2}(T_s (i),i)=s\),  and on the other hand
 \(\ln(x)<\cosh^{-1}(x)=\ln(x+\sqrt{x^2-1})<\ln(2x)\), therefore 
\[
d_{\mathbb{H}^\infty}(\iota(T_s(i)),\iota(i))\leq\cosh^{-1}(e^{s/2})<\ln(2e^{s/2})=\frac{s}{2}+\ln(2),
\]
and
\[d_{\mathbb{H}^\infty}(\iota(T_s(i)),\iota(i))\geq\cosh^{-1}(2e^{s/2}/\pi)>\ln(2e^{s/2}/\pi)=\frac{s}{2}+\ln(2/\pi),\]
which is sufficient to conclude the desired result by taking \(C=\ln(2)\).
\end{proof}

The fact that the Minkowski sum of two collections of pairwise non-homothetic ellipses do not coincide seems to be well-known, but we are unable to find a reference, so we will provide here a simple proof, inspired by Will Sawin \cite{MO}. Note that the same arguments remain valid for ellipsoids in higher dimensions as well.

Let \(E\coloneqq AD\) for some \(A\in \sl\). First, from \eqref{eq:sup fct ellipse}, we can see that the support function of an ellipse is the square root of a quadratic form. In particular, considering the one-homogeneous extension of  the support function to \(\R^2\), we can write for any \((x,y)\in \R^2\)
\[\supp(E)(x,y)=\sqrt{Q(x,y)}~,\]
where \(Q(x,y)\) is a quadratic form on \(\R^2\). The support function \(\sqrt{Q(x,y)}\) extends to a multi-branched holomorphic function on \(\mathbb{C}^2\setminus \Lambda_1\cup \Lambda_2\), where the \(\Lambda_i\)'s are the two complex lines of singularities of \(\sqrt{Q(x,y)}\). To be more precise, if
\[A=\begin{pmatrix}
    a & b\\ c& d
\end{pmatrix}\in \sl,\]
then \(Q(x,y)=\big((a-ib)x+(c-id)y\big)\big((a+ib)x+(c+id)y\big)\), and hence, if we set \(z_1\coloneqq a+ib\), \(z_2=c+id\) and \(\tau\colon A\mapsto -z_1/z_2\), then the two lines of the singularities are \(\{y=\tau(A)x\}\) and \(\{y=\overline{\tau(A)}x\}\) (note that because \(A\in \sl\), \(z_i\not=0\) for \(i=1,2\)).

\begin{lemma}\label{lem: ellipse injective map}
Let \(\tau\colon \sl\to \mathbb{C}\) be as above. For \(A,A'\in \sl\), one has \(\tau(A)=\tau(A')\) if and only if \(A'=A R\) for some \(R\in \so\). Moreover, the map \(\widetilde{\tau}\colon \Ec\to \mathbb{C}\) defined by \(AD\mapsto \tau(A)\) is a well-defined injective map.
\end{lemma}
\begin{proof}
The second result of the lemma is implied by the first result. Now, let us prove the first result.  For the ``only if'' part, let \(z_1,z_2\) be as above and \(z_1',z_2'\) be the corresponding quantities for \(A'\). By our assumption, we have
\[-\frac{z_1}{z_2}=\tau(A)=\tau(A')=-\frac{z_1'}{z_2'},\]
which implies \(z_1'/z_1=z_2'/z_2=:s\in \mathbb{C}\setminus\{0\}\). Moreover, \(A,A'\in \sl\) indicates that 
\[1=\mathrm{Im}(\overline{z_1}'z_2')=\mathrm{Im}(|s|^2\overline{z_1}z_2)=|s|^2,\]
hence \(s=e^{i\theta}\) for some \(\theta\in [0,2\pi)\). As \(z_i'=sz_i\) for \(i=1,2\), we can compute
\[A'=\begin{pmatrix}
    a\cos\theta-b\sin\theta & a\sin\theta+b\cos\theta \\  c\cos\theta-d\sin\theta & d\cos\theta+c\sin\theta
\end{pmatrix}=A\begin{pmatrix}
    \cos\theta & \sin\theta\\ -\sin\theta & \cos\theta
\end{pmatrix}\eqqcolon AR~,\]
for \(R\in \so\).

From the computation above, the ``if'' part is trivial. Indeed, if \(A'=AR\), then \(z'_i=e^{i\theta}z_i\) for \(i=1,2\), hence \(\tau(A)=\tau(A')\).
\end{proof}

Now, we are able to conclude the following:
\begin{proposition}\label{prop: sum minkowski}
    Let \(E_1,\dots, E_n\in\E_c^\pi\) be pairwise non-homothetic ellipses. Then for any \(1\leq m< n\) and any positive constants \((c_k)_{1\leq k\leq n}\), the Minkowski sums \(\sum_{k=1}^m c_k E_k\) and \(\sum_{k=m+1}^n c_k E_k\) are different.
\end{proposition}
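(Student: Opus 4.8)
The plan is to work with support functions and exploit the fact that the support function of an ellipse has a very specific form, namely $\supp(E)(\theta) = \sqrt{a\cos^2\theta + 2b\cos\theta\sin\theta + c\sin^2\theta}$ for a positive-definite quadratic form $Q = \begin{pmatrix} a & b \\ b & c\end{pmatrix}$, i.e. $\supp(E)(\theta) = \sqrt{Q(\cos\theta,\sin\theta)}$. Suppose for contradiction that $\sum_{k=1}^n c_k E_k = E$ is an ellipse. By additivity \eqref{eq:addition supp} and homogeneity \eqref{eq:lambda supp} of the support function, this means
\[
\sum_{k=1}^n c_k \sqrt{Q_k(\cos\theta,\sin\theta)} = \sqrt{Q(\cos\theta,\sin\theta)}
\]
for all $\theta$, where $Q_k$ is the quadratic form of $E_k$ and $Q$ that of $E$. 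Writing $x = \cos\theta$, $y = \sin\theta$ and using one-homogeneity, this identity extends to all $(x,y)\in\R^2$: $\sum_k c_k\sqrt{Q_k(x,y)} = \sqrt{Q(x,y)}$.

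The key step is to analyze this identity as an identity between real-analytic functions on $\R^2$ and derive a contradiction from the branch points of the square roots. Squaring once gives $\sum_k c_k^2 Q_k + 2\sum_{j<k} c_jc_k\sqrt{Q_jQ_k} = Q$, so $\sum_{j<k} c_jc_k\sqrt{Q_j Q_k}$ is a quadratic polynomial. I would then argue that each $\sqrt{Q_jQ_k}$ must itself be a polynomial, hence $Q_jQ_k$ must be a perfect square; since $Q_j, Q_k$ are positive-definite quadratic forms, $Q_jQ_k$ being a perfect square of a polynomial forces $Q_j$ and $Q_k$ to be proportional (complexify: $Q_j$ and $Q_k$ each factor into two distinct linear forms over $\C$, and $Q_jQ_k$ is a square iff the factorizations coincide up to scalar), contradicting the non-homothety hypothesis. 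To see that each $\sqrt{Q_jQ_k}$ is a polynomial, I would use that the left-hand side of $\sum_{j<k}c_jc_k\sqrt{Q_jQ_k} = (\text{polynomial})$ is a sum of functions with prescribed singularities: $\sqrt{Q_jQ_k}$ is singular (fails to be smooth/analytic) precisely along the real zero locus of $Q_jQ_k$, but a positive-definite $Q$ has no real zeros other than the origin, so actually one must be more careful — the relevant singularities are the complex branch loci. The cleanest route is to complexify: extend everything to $\C^2$ (or restrict to a suitable complex line) and track the branch points of $\sqrt{Q_k}$, which occur where $Q_k$ vanishes, i.e. along the two complex conjugate lines $\ell_k, \bar\ell_k$ through the origin. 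Along a generic complex line $L\subset\C^2$, the function $\theta\mapsto \sqrt{Q_k|_L}$ has two simple branch points; monodromy around one of them multiplies $\sqrt{Q_k}$ by $-1$ while leaving $\sqrt{Q_j}$ ($j\ne k$) invariant (the branch points are distinct since the $Q_k$ are pairwise non-proportional). Applying this monodromy to the identity $\sum_k c_k\sqrt{Q_k} = \sqrt{Q}$ and subtracting, one isolates $2c_k\sqrt{Q_k|_L} = \pm\sqrt{Q|_L} \mp \sqrt{Q|_L}$-type relations that force $\sqrt{Q}$ to have a branch point coinciding with that of $\sqrt{Q_k}$ for every $k$; but $\sqrt{Q|_L}$ has only two branch points, so $n$ must be at most $1$ (after accounting for conjugate pairs), contradicting $n\ge 2$.

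I expect the main obstacle to be making the monodromy/branch-point argument fully rigorous, in particular handling the bookkeeping of which branch points of the various $\sqrt{Q_k}$ can collide and correctly concluding that the $n$ pairs of branch points $\{\ell_k,\bar\ell_k\}$ must all be distinct (which is exactly the non-homothety hypothesis) and cannot all be "absorbed" into the two branch points of $\sqrt{Q}$. A cleaner packaging, which I would actually prefer to write up, avoids monodromy: apply the Laplacian (or the operator $h\mapsto h'' + h$ in the $\theta$-variable, whose kernel on $\pi$-periodic even functions detects support functions of segments/points) and use that $\supp(E)'' + \supp(E) = \frac{\det Q}{Q(\cos\theta,\sin\theta)^{3/2}} > 0$ is a strictly positive function with a very rigid form; the identity $\sum_k c_k\,\frac{\det Q_k}{Q_k^{3/2}} = \frac{\det Q}{Q^{3/2}}$ (where I abbreviate $Q_k = Q_k(\cos\theta,\sin\theta)$) then leads to a contradiction by examining behavior as $(\cos\theta,\sin\theta)$ approaches a complex zero of one $Q_k$, since the left side blows up there while the others stay bounded. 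Either way, the analytic heart is: a nontrivial positive-coefficient combination of the functions $Q_k^{-3/2}$ (equivalently $\sqrt{Q_k}$), for pairwise non-proportional positive-definite $Q_k$, cannot equal a single such function, because their complex singularity sets are distinct.
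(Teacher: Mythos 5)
Your proposal is correct and is essentially the paper's own argument: identify $\supp(E_k)$ with $\sqrt{Q_k}$, complexify, and observe that the branch locus of $\sum_k c_k\sqrt{Q_k}$ is the union of the $n\ge 2$ distinct pairs of complex lines $\Lambda_k=\{Q_k=0\}$ (distinct precisely by non-homothety), which cannot be the two-line branch locus of a single $\sqrt{Q}$. The monodromy point you worry about (that the branching along each $\Lambda_k$ cannot cancel against the other terms, which are locally single-valued there) is exactly the step the paper also leaves implicit, so your write-up would if anything be more complete.
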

\begin{proof}
Consider the one-homogeneous multi-branched extension of \(\supp(E_k)\) to \(\mathbb{C}^2\), namely \(\sqrt{Q_k(x,y)}\) described as above, with singularities \(\Lambda_1^k\cup \Lambda^k_2\), where \(\Lambda^k_1\) and \(\Lambda^k_2\) are two complex lines. If \(E_k= A_k D\), then we may write \(\Lambda^k_1 = \{(x,\tau(A_k)x), x\in \mathbb{C}\} \subset \mathbb{C}^2\) and \(\Lambda^k_2 = \{(x,\overline{\tau(A_k)}x), x\in \mathbb{C}\} \subset \mathbb{C}^2\). If \(E_i = A_i D\) and \(E_j = A_j D\) are non-homothetic ellipses, then \(\tau(A_i) \neq \tau(A_j)\) and \(\tau(A_i) \neq \overline{\tau(A_j)}\). Therefore, \(\Lambda^i_r\neq \Lambda^j_s\) for \(r,s\in\{1,2\}\) by Lemma~\ref{lem: ellipse injective map}. However, the function
\[\varphi_1\coloneqq \sum_{k=1}^m c_k \sqrt{Q_k}\]
has singularities \(\bigcup_{k=1}^m(\Lambda^k_1\cup\Lambda^k_2)\), whereas
\[\varphi_2\coloneqq\sum_{k=m+1}^n c_k \sqrt{Q_k}\]
has singularities \(\bigcup_{k=m+1}^n(\Lambda^k_1\cup\Lambda^k_2)\). So the two functions \(\varphi_1,\varphi_2\) do not have the same set of singularities and thus cannot be the same function. But they are, respectively, the one-homogeneous extensions of \(\supp\left(\sum_{k=1}^m c_k E_k\right)\) and \(\supp\left(\sum_{k=m+1}^n c_k E_k\right)\). If the Minkowski sums \(\sum_{k=1}^m c_k E_k\) and \(\sum_{k=m+1}^n c_k E_k\) were the same, so would \(\varphi_1\) and \(\varphi_2\), which is not true.
\end{proof}

A consequence of the above proposition is the following result on the dimension of hyperbolic space spanned by points on the minimal surface \(\iota(\H^2)\subset\H^\infty\):

\begin{corollary}\label{cor: k dimension}
    Any \(n+1\) distinct points in \(\iota(\H^2)\subset\H^\infty\) generate a hyperbolic subspace of dimension \(n\) in \(\H^\infty\).
\end{corollary}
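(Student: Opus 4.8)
The plan is to reduce the statement to a linear-algebra fact about the span of support functions, and then invoke Proposition~\ref{prop: sum minkowski}. Recall that a finite set of points $p_0,\dots,p_n \in \H^\infty \subset H^1_{\operatorname{even}}$ spans an $n$-dimensional hyperbolic subspace of $\H^\infty$ precisely when the vectors $p_0,\dots,p_n$ are linearly independent in $H^1_{\operatorname{even}}$; indeed, the hyperbolic subspace spanned by these points is the intersection of $\H^\infty$ with the linear span of the corresponding vectors (as in Remark~\ref{remark midpoint} for the two-point case), and this intersection is $k$-dimensional as a hyperbolic space exactly when the linear span is $(k+1)$-dimensional. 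So it suffices to prove that if $E_0,\dots,E_n \in \Ec$ are $n+1$ distinct symmetric ellipses of area $\pi$, then $\supp(E_0),\dots,\supp(E_n)$ are linearly independent in $H^1_{\operatorname{even}}$.

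First I would observe that distinct ellipses in $\Ec$ are pairwise non-homothetic: two ellipses of the same area that are positively homothetic must be equal, and symmetric ellipses cannot be negatively homothetic to a distinct one either (central symmetry makes $-E = E$). Next, suppose for contradiction that there is a non-trivial linear relation $\sum_{k=0}^n \lambda_k \supp(E_k) = 0$. Separate the indices into those with $\lambda_k > 0$ and those with $\lambda_k < 0$; after relabelling, this gives an identity $\sum_{k \in I} a_k \supp(E_k) = \sum_{k \in J} b_k \supp(E_k)$ with all $a_k, b_k > 0$ and $I, J$ disjoint and non-empty (if one side were empty, a sum of support functions of positive-area bodies would be zero, which is absurd since such functions are strictly positive). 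By additivity and positive homogeneity of $\supp$ (equations~\eqref{eq:addition supp} and~\eqref{eq:lambda supp}), this says $\supp\!\left(\sum_{k \in I} a_k E_k\right) = \supp\!\left(\sum_{k \in J} b_k E_k\right)$, hence $\sum_{k \in I} a_k E_k = \sum_{k \in J} b_k E_k$ as convex bodies, since a convex body is determined by its support function.

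Now I would split into cases according to $|I|$ and $|J|$. If $|I| \geq 2$, then the left-hand side is, by Proposition~\ref{prop: sum minkowski}, \emph{not} an ellipse; but if $|J| = 1$ the right-hand side is $b_k E_k$, an ellipse, a contradiction, and if $|J| \geq 2$ the right-hand side is also not an ellipse — so to finish this subcase I would apply Proposition~\ref{prop: sum minkowski} to the combined identity $\sum_{k \in I} a_k E_k - (\text{one term of } J)$ moved across, i.e.\ rewrite the relation so that one side is a single ellipse and the other side is a Minkowski sum of at least two non-homothetic ellipses, which is the setting where Proposition~\ref{prop: sum minkowski} directly applies. The remaining case is $|I| = |J| = 1$, giving $a_0 E_0 = b_0 E_1$ with $E_0 \neq E_1$, so $E_0$ and $E_1$ are homothetic — contradicting the pairwise non-homotheticity established above. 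In all cases we reach a contradiction, so no non-trivial relation exists.

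The main obstacle is the bookkeeping in the last paragraph: Proposition~\ref{prop: sum minkowski} as stated concerns a sum of \emph{at least two} ellipses not being an ellipse, so to handle a general vanishing linear combination one must massage it into the form ``(ellipse or sum of ellipses) $=$ (sum of $\geq 2$ non-homothetic ellipses)'' and check that the non-homotheticity hypothesis is inherited by the sub-collections appearing. The cleanest route, which I would adopt, is to phrase everything via the singularity-set argument already used in the proof of Proposition~\ref{prop: sum minkowski}: the support function of $\sum_k c_k E_k$ (any positive $c_k$) has singularity locus $\bigcup_k \Lambda_k$, and two such sums have equal support functions only if they have the same singularity locus, which forces the two index sets to give the same union of complex lines — and since the $\Lambda_k$ are distinct for distinct ellipses, a short combinatorial argument on unions of pairs of complex lines rules out any mismatch. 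This makes the contradiction transparent without case analysis.
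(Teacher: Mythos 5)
Your overall strategy is the same as the paper's: reduce to a statement about positive combinations of ellipses and contradict Proposition~\ref{prop: sum minkowski}. But your execution is more careful, and in one place genuinely more complete. The paper argues that a dimension drop forces one of the points to lie in the convex hull of the others, so that a single ellipse equals a Minkowski sum of the remaining ones, which is exactly the situation covered by Proposition~\ref{prop: sum minkowski}. In general, linear dependence of the $n+1$ vectors only yields a Radon-type partition, which is precisely your identity $\sum_{k\in I}a_k\supp(E_k)=\sum_{k\in J}b_k\supp(E_k)$ with both $I$ and $J$ possibly of size $\geq 2$; you correctly notice that this ``sum equals sum'' case is not literally an instance of Proposition~\ref{prop: sum minkowski} and must be handled by rerunning its singularity argument. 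Your preliminary reductions (linear independence $\Leftrightarrow$ spanning an $n$-dimensional hyperbolic subspace, strict positivity of support functions ruling out a one-sided relation, distinct ellipses of equal area being non-homothetic) are all correct.

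The one soft spot is the final claim that ``since the $\Lambda_k$ are distinct for distinct ellipses, a short combinatorial argument on unions of pairs of complex lines rules out any mismatch.'' Distinctness of the sets $\Lambda_k$ alone does \emph{not} suffice: four pairwise distinct pairs of lines can satisfy $\{\ell_1,\ell_2\}\cup\{\ell_3,\ell_4\}=\{\ell_1,\ell_3\}\cup\{\ell_2,\ell_4\}$. What saves you is that each $Q_k$ is a \emph{real positive-definite} binary form, so over $\mathbb{C}$ it factors as a product of two complex-conjugate, non-real linear forms; hence each $\Lambda_k$ is a conjugate pair of lines, and two such pairs are either equal or disjoint. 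Pairwise non-homothetic ellipses therefore have pairwise \emph{disjoint} singular loci, which immediately forbids $\bigcup_{k\in I}\Lambda_k=\bigcup_{k\in J}\Lambda_k$ for disjoint non-empty $I,J$ (and also guarantees that the singular locus of $\sum_{k\in I}a_k\sqrt{Q_k}$ really is the full union, with no cancellation, since near a generic point of any one line exactly one summand is branched). With that observation inserted, your proof is complete; alternatively, a monodromy argument around each branch line gives the linear independence of the $\sqrt{Q_k}$ directly. Either way, please make this step explicit rather than leaving it as ``a short combinatorial argument.''
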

\begin{proof}
To conclude the desired result, it suffices to show that the associated \(n+1\) support functions \((\supp(E_k))_{1\leq k\leq n+1}\) are linearly independent in \(H^1_{\operatorname{even}}\). Suppose for contradiction that it is not the case, then we will have
\[\supp\Big(\sum_{k=1}^m c_k E_k\Big)=\sum_{k=1}^m c_k \supp(E_k)=\sum_{k=m+1}^{n+1} c_k \supp(E_k)=\supp\Big(\sum_{k=m+1}^{n+1} c_k E_k\Big)\,,\]
for some \(c_k\geq 0\)  and \(k=1,\dots,n\), with some non-zero \(c_k\). However, this will imply \(\sum_{k=1}^m c_k E_k=\sum_{k=m+1}^{n+1} c_k E_k\), which is absurd after Proposition~\ref{prop: sum minkowski}.
\end{proof}

\subsection{The closed convex hull}\label{sec Ck}

In this subsection, we derive several properties of \(C_\K\coloneqq\supp^\pi(\K_c^\pi)\).

Here, we recall that a subset \(C\subset (X,d)\) in a geodesic metric space is {\it convex} if for any \(x,y\in C\), all minimizing geodesics between \(x\) and \(y\) are contained in \(C\). 

\begin{proposition}\label{prop:Ck ferme}
\(C_\K\) 
is a closed convex set in \((\H^\infty,d_{\H^\infty})\), and a proper metric space for the induced distance.
\end{proposition}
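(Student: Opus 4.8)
The plan is to prove the three assertions of Proposition~\ref{prop:Ck ferme} in turn, reducing each to facts already established: closedness to Lemma~\ref{lem:K barre ferme} and the properness of $\supp^\pi$, convexity to the additivity \eqref{eq:addition supp} and the description of geodesics in Remark~\ref{remark midpoint}, and the properness of $C_\K$ to Lemma~\ref{supp proper} together with Proposition~\ref{prop:blaschke}.

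\textit{Closedness.} First I would show $C_\K=\supp^\pi(\K_c^\pi)$ is closed in $(\H^\infty,d_{\H^\infty})$. Take a sequence $h_i=\supp(K_i)$ with $K_i\in\K_c^\pi$ and $h_i\to h$ in $\H^\infty$. By Lemma~\ref{lem:meme topo} this convergence is also in $\|\cdot\|_{H^1}$, so $h$ lies in the $H^1$-closure of $\supp(\overline\K_c)$, which is $\supp(\overline\K_c)$ itself by Lemma~\ref{lem:K barre ferme}; hence $h=\supp(K)$ for some $K\in\overline\K_c$. It remains to rule out $K$ being a segment or $\{0\}$: since $h\in\H^\infty$ we have $\A(h)=1$, so $\a(K)=\pi\A(\supp(K))=\pi>0$ by \eqref{eq:area equality}, forcing $K\in\K_c^\pi$. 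Thus $h\in C_\K$ and $C_\K$ is closed.

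\textit{Convexity.} Let $h_1=\supp(K_1)$, $h_2=\supp(K_2)$ be two points of $C_\K$, with $K_1,K_2\in\K_c^\pi$. By Remark~\ref{remark midpoint}, the geodesic of $\H^\infty$ between $h_1$ and $h_2$ is the image of $t\mapsto p(t)=\supp(K_t)/\A(K_t)^{1/2}$ where $K_t=(1-t)K_1+tK_2$, using that $\supp$ is additive and positively homogeneous \eqref{eq:addition supp}–\eqref{eq:lambda supp}. For each $t\in[0,1]$, $K_t$ is a Minkowski combination of convex bodies with non-empty interior (the origin is interior to each $K_i$, hence to $K_t$), so $K_t\in\K_c$, and $\A(K_t)>0$; rescaling it to have area $\pi$ gives a body $K_t'=\A(K_t)^{-1/2}K_t\in\K_c^\pi$ with $\supp(K_t')=p(t)$. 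Therefore every point of the geodesic lies in $C_\K$, and since $\H^\infty$ is $\mathrm{CAT}(-1)$ the geodesic between two points is unique, so $C_\K$ is convex.

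\textit{Properness.} Let $X\subset C_\K$ be closed and bounded for $d_{\H^\infty}$. Its closure in $\H^\infty$ is a closed bounded subset of the complete space $\H^\infty$, but this is not yet compact in infinite dimension, so instead I pull back: $(\supp^\pi)^{-1}(X)\subset\K_c^\pi$ is closed by Lemma~\ref{lem:supp conti inj} (continuity) and bounded for $d_{\operatorname{Haus}}$ by Lemma~\ref{lem:vitale}, hence compact by Blaschke's selection theorem (Proposition~\ref{prop:blaschke}). Since $\supp^\pi$ restricted to $\K_c^\pi$ is continuous and, by Proposition~\ref{rem: supp homeo}, a homeomorphism onto $C_\K$, the image $X=\supp^\pi\big((\supp^\pi)^{-1}(X)\big)$ is compact. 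Thus every closed bounded subset of $C_\K$ is compact, i.e.\ $C_\K$ is a proper metric space. The main subtlety I expect is the convexity step: one must be careful that $\K_c^\pi$ (bodies of area exactly $\pi$, non-degenerate) is not convex as a subset of $\overline\K_c$, so convexity of $C_\K$ genuinely uses the rescaling inherent in the hyperboloid picture of Remark~\ref{remark midpoint} rather than naive Minkowski interpolation, and one must check the interpolants stay in $\K_c$ (non-empty interior) and have positive area.
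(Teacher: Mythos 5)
Your proof is correct and follows essentially the same route as the paper: closedness via Lemma~\ref{lem:K barre ferme} and Lemma~\ref{lem:meme topo}, convexity from the additivity and homogeneity of \(\supp\) (the paper phrases this as \(\supp(\overline\K_c)\) being a convex cone whose radial projection onto the hyperboloid is convex, which is exactly your Remark~\ref{remark midpoint} computation), and properness by pulling closed bounded sets back to \(\K_c^\pi\) and invoking Blaschke selection. The extra details you supply — ruling out segment/origin limits via \(\A(h)=1\) and checking the interpolants rescale into \(\K_c^\pi\) — are correct and only make explicit what the paper leaves implicit.
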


\begin{proof}
It follows from \eqref{eq:addition supp} and \eqref{eq:lambda supp}   that \(\supp(\overline{\K}_c)\) is a convex cone in \(H^1_{\operatorname{even}}\), and hence \(\supp^\pi(\K_c^\pi)\) is a convex subset of \(\H^\infty\).
The closeness is given by Lemma~\ref{lem:K barre ferme} and Lemma~\ref{lem:meme topo}.

Let \(X\) be a bounded closed set of \(\supp^\pi(\overline{\K}_c)\). By Lemma~\ref{lem:supp conti inj} its preimage is closed, and bounded by Lemma~\ref{lem:vitale}, hence compact by 
Proposition~\ref{prop:blaschke}. So \(X\) is compact, again by the continuity of \(\supp^\pi\).
\end{proof}

As segments have zero area, \(\supp(\mathrm{S})\) is a subset of the isotropic cone of \(\A\). In the following we will identify it as a subset of the boundary at infinity of \(\H^\infty\). In the rest of this subsection, we will write \(\CH_{\H^\infty}\) the closed convex hull in \(\H^\infty\). 

Let \(\Gamma\subset \partial_\infty \H^\infty\) be the subset defined by the image of the segments \(\supp(\mathrm{S})\). Let \(\CH_{\H^\infty}(\Gamma)\) be  the intersection of all the closed convex subsets of \(\H^\infty\) whose boundary at infinity contains \(\CH_{\H^\infty}(\Gamma)\). With these notations, Proposition~\ref{prop: CH1} gives the following:

\begin{corollary}\label{cor: CH}
We have \(C_\K=\CH_{\H^\infty}(\Gamma)\), \(\partial_\infty C_K=\Gamma\) and \(C_\K=\CH_{\H^\infty}(\iota(\H^2))\). \qeda
\end{corollary}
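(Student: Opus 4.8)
The plan is to transport Proposition~\ref{prop: CH1}, which is a statement about closed convex hulls in the Hilbert space $(H^1_{\operatorname{even}},\|\cdot\|_{H^1})$, into the hyperbolic space $(\H^\infty,d_{\H^\infty})$ via the projectivization $h\mapsto [h]$. Recall that $\H^\infty$ can be viewed as the set of positive lines $[h]$ with $\A(h)>0$, and its boundary at infinity as the set of isotropic lines $[h]$ with $\A(h)=0$; under this identification, a closed convex cone in $H^1_{\operatorname{even}}$ that meets the positive cone $\{\A>0\}$ projects to a closed convex subset of $\H^\infty$, and the isotropic lines it contains form its boundary at infinity. This is the content already used in Proposition~\ref{prop:Ck ferme}. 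Concretely, $\supp(\overline{\K}_c)$ is a closed convex cone (closed by Lemma~\ref{lem:K barre ferme} and Lemma~\ref{lem:meme topo}; a cone by \eqref{eq:addition supp} and \eqref{eq:lambda supp}), its intersection with $\{\A=1,\ \pi_0>0\}$ is exactly $C_\K=\supp^\pi(\K_c^\pi)$, and its isotropic lines are precisely $\supp(\mathrm{S})$, which by definition is $\Gamma$. Hence $\partial_\infty C_\K=\Gamma$ follows directly, and $C_\K$ is convex and closed in $\H^\infty$.

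Next I would identify $C_\K$ with $\CH_{\H^\infty}(\Gamma)$. For ``$\supseteq$'': $C_\K$ is a closed convex subset of $\H^\infty$ with $\partial_\infty C_\K=\Gamma\supseteq\Gamma$, so by definition of $\CH_{\H^\infty}(\Gamma)$ as the intersection of all such sets, $\CH_{\H^\infty}(\Gamma)\subseteq C_\K$. For ``$\subseteq$'': any closed convex $C'\subseteq\H^\infty$ with $\Gamma\subseteq\partial_\infty C'$ lifts to a closed convex cone $\widehat{C'}\subseteq H^1_{\operatorname{even}}$ containing the rays in $\supp(\mathrm{S})$, hence containing $\CH_{H^1}(\supp(\mathrm{S}))=\supp(\overline{\K}_c)$ by Proposition~\ref{prop: CH1}; intersecting with $\{\A=1,\pi_0>0\}$ gives $C_\K\subseteq C'$. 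Taking the intersection over all such $C'$ yields $C_\K\subseteq\CH_{\H^\infty}(\Gamma)$. The same argument verbatim with $\mathcal{E}_c$ in place of $\mathrm{S}$, using $\CH_{H^1}(\supp(\mathcal{E}_c))=\supp(\overline{\K}_c)$ from Proposition~\ref{prop: CH1}, shows $C_\K=\CH_{\H^\infty}(\iota(\H^2))$ — noting that $\iota(\H^2)=\supp^\pi(\Ec)$ is the set of unit representatives of the rays $\supp(\mathcal{E}_c)$, and that the closed convex hull of a set and of the set of its limit directions in $\partial_\infty\H^\infty$ interact compatibly with the conical lift.

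The step I expect to require the most care is the passage between ``closed convex hull in the Hilbert space $H^1_{\operatorname{even}}$'' and ``closed convex hull in $\H^\infty$ together with the prescription of its boundary at infinity'': one must check that the cone over a closed convex subset of $\H^\infty$ with its isotropic boundary rays is genuinely closed and convex in $H^1_{\operatorname{even}}$, and conversely that intersecting a closed convex cone with the affine slice $\{\pi_0=\text{const}\}$ (or the hyperboloid $\{\A=1\}$) recovers a geodesically convex subset of $\H^\infty$ whose boundary at infinity is read off from the isotropic lines in the cone. These facts are standard for the hyperboloid model and are implicitly used already in Proposition~\ref{prop:Ck ferme} and Remark~\ref{remark midpoint} (geodesics are traces of $2$-planes), so the proof can cite them; the only genuinely new input is the combination with Proposition~\ref{prop: CH1}, after which all three equalities drop out simultaneously.
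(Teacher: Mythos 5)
Your proposal is correct and follows the same route as the paper, which states the corollary as an immediate consequence of Proposition~\ref{prop: CH1} (with no written proof); you have simply made explicit the standard dictionary between closed convex cones in \(H^1_{\operatorname{even}}\) and closed convex subsets of \(\H^\infty\) together with their boundaries at infinity. The details you flag as delicate (closedness and convexity of the conical lift, recovery of the hyperbolic convex set by slicing, identification of isotropic lines with \(\partial_\infty\)) are exactly the facts the paper leaves implicit, and your verification of them is sound.
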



Now we can deduce the following result. Its proof is taken from Lemma 4.1 in \cite{MP}. 
\begin{lemma}\label{lem:ck minimal}
The subset \(C_\K\) is the minimal (with respect to inclusion) closed convex invariant subspace of \(\H^\infty\) for \(\rho(\psl)\)
\end{lemma}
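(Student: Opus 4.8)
The plan is to show that any nonempty closed convex $\rho(\psl)$-invariant subset $Y\subset\H^\infty$ must contain $C_\K$, which combined with Proposition~\ref{prop:Ck ferme} (that $C_\K$ itself is closed, convex) and the fact that $C_\K$ is $\rho(\psl)$-invariant (it is the image of the $\psl$-invariant set $\K_c^\pi$ under the equivariant map $\supp^\pi$, by \eqref{eq:equivarience supp}) gives minimality. So let $Y$ be such a set. First I would observe that since $Y$ is closed and convex in a $\mathrm{CAT}(-1)$ (hence $\mathrm{CAT}(0)$) space, it contains the closed convex hull of any of its subsets; thus it suffices to find \emph{one} point of $\iota(\H^2)$ inside $Y$, because then by $\rho(\psl)$-equivariance and Remark~\ref{remark:orbit ellipse} all of $\iota(\H^2)$ lies in $Y$, and then $\CH_{\H^\infty}(\iota(\H^2))=C_\K$ by Corollary~\ref{cor: CH} forces $C_\K\subseteq Y$.

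The key step is therefore to produce a point of $\iota(\H^2)$ in $Y$. Following the argument of \cite[Lemma 4.1]{MP}: take any $y\in Y$ and consider its orbit $\rho(\so)\cdot y$, a bounded (since $\so$ is compact and acts by isometries) subset of $Y$. Since $\H^\infty$ is $\mathrm{CAT}(0)$ and complete, this bounded set has a unique circumcenter $y_0$ (the center of the smallest closed ball containing it); by uniqueness $y_0$ is fixed by $\rho(\so)$, and since $Y$ is closed and convex, $y_0\in Y$ (the circumcenter of a subset of a closed convex set lies in that set — it is a limit of midpoint-type constructions, or one can invoke that the circumcenter is the projection-stable point). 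By Lemma~\ref{lemma:* fixed point}, the only $\rho(\so)$-fixed point of $\H^\infty$ is $\o=\iota(i)\in\iota(\H^2)$. Hence $\o\in Y$, and we are done.

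The step I expect to be the main technical point is the existence and uniqueness of the circumcenter and the verification that it lies in $Y$: this rests on the standard fact that in a complete $\mathrm{CAT}(0)$ space every bounded set has a unique circumcenter, which is fixed by any isometry group preserving the set, together with the fact that a closed convex subset of a $\mathrm{CAT}(0)$ space is itself complete $\mathrm{CAT}(0)$ and so the circumcenter computed within $Y$ coincides with the one in $\H^\infty$. I would cite \cite[Chapter II.2]{BH} for these facts (the "center of a bounded set" / Bruhat--Tits fixed point argument). Everything else — equivariance, the identification of the $\so$-fixed point, and passing to the convex hull — is immediate from results already established in the excerpt.

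\begin{proof}
By Proposition~\ref{prop:Ck ferme}, $C_\K$ is closed and convex in $\H^\infty$, and by \eqref{eq:equivarience supp} together with the fact that $\K_c^\pi$ is $\psl$-invariant, $C_\K=\supp^\pi(\K_c^\pi)$ is $\rho(\psl)$-invariant. It remains to prove minimality.

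Let $Y\subseteq\H^\infty$ be a nonempty closed convex $\rho(\psl)$-invariant subset. As a closed convex subset of the complete $\mathrm{CAT}(0)$ space $\H^\infty$, $Y$ is itself a complete $\mathrm{CAT}(0)$ space. Pick $y\in Y$; since $\rho(\so)$ acts by isometries and $\so$ is compact, the orbit $\rho(\so)\cdot y$ is a bounded subset of $Y$. By \cite[Chapter II.2]{BH}, this bounded set admits a unique circumcenter $y_0$, which lies in $Y$ (the circumcenter of a bounded subset of a complete $\mathrm{CAT}(0)$ space lies in any closed convex subset containing that set, as it may be computed within $Y$). By uniqueness, $y_0$ is fixed by $\rho(\so)$. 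By Lemma~\ref{lemma:* fixed point}, the only $\rho(\so)$-fixed point of $\H^\infty$ is $\o$, so $\o\in Y$.

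Now $\o=\iota(i)\in\iota(\H^2)$, and by $\rho(\psl)$-invariance of $Y$ together with the $\rho(\psl)$-equivariance of $\iota$ and Remark~\ref{remark:orbit ellipse} (i.e. $\iota(\H^2)$ is the $\rho(\psl)$-orbit of $\o$), we get $\iota(\H^2)\subseteq Y$. Since $Y$ is closed and convex, $\CH_{\H^\infty}(\iota(\H^2))\subseteq Y$, and by Corollary~\ref{cor: CH} this means $C_\K\subseteq Y$. Hence $C_\K$ is the minimal closed convex $\rho(\psl)$-invariant subset of $\H^\infty$.
\end{proof}
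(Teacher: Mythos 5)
Your proof is correct and follows essentially the same route as the paper's (which is the argument of \cite[Lemma 4.1]{MP}): an invariant closed convex set must contain the unique $\rho(\so)$-fixed point $\o$, hence the orbit $\iota(\H^2)$, hence its closed convex hull $C_\K$ by Corollary~\ref{cor: CH}. The only difference is that you spell out the Bruhat--Tits circumcenter step that the paper leaves implicit when it invokes Lemma~\ref{lemma:* fixed point}, which is a welcome clarification rather than a divergence.
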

\begin{proof}
If \(C'\) is a  \(\rho(\psl)\)-invariant closed convex set, it must contain \(\o\) by Lemma~\ref{lemma:* fixed point}. Hence it must contain the orbit of \(\o\) that is \(\iota(\H^2)\), and by Corollary~\ref{cor: CH}, it contains \(C_\K\).
\end{proof}

Note that \(\H^\infty\) is a \(\operatorname{CAT}(-1)\) space. For any closed convex subset in \(\H^\infty\), there exists a unique point with the shortest \(d_{\H^\infty}\) distance to \(\o\in \H^\infty\). By \eqref{eq:perimeter}, the distance
\[d_{\H^\infty}(\o,h)=\cosh^{-1}\pi_0(h)~,\]
and in particular, if \(h=\supp(K)\in \supp(\K_c^\pi)\), then
\[d_{\H^\infty}(\o,h)=\cosh^{-1}\left(\frac{1}{2\pi}\p(K) \right).\]
As \(\cosh^{-1}\) increases monotonically, having the shortest \(d_{\H^\infty}\) distance for \(h\) in a closed convex subset of \(\supp(\K_c^\pi)\) is equivalent to being the optimal solution to the isoperimetric problem in this convex subset.

From the above discussions as well as Remark~\ref{remark midpoint}, we can see that the bi-infinite geodesic line in \(C_\K\subset\H^\infty\) connecting \(\nu,\omega\in \Gamma\) consists of the support functions of parallelograms of area \(\pi\)  homothetic to \(av+bw\) for some \(a,b>0\), where \(\supp(v)=\nu\) and \(\supp(w)=\omega\). If, in particular, we wish to find the nearest point projection of \(\o\) to this bi-infinite geodesic line, it is equivalent to find the optimal solution to the isoperimetric problem among these parallelograms.

Indeed, we can deduce the following result that will be used:
\begin{lemma}\label{lem: AM-GM}
    Let \(\nu,\omega\in \Gamma\) be distinct. Let \(h\) be the nearest point projection of \(\o\) to the bi-infinite geodesic line connecting \(\nu,\omega\in \Gamma\), with \(v,w\in \SS\) unit symmetric segments satisfying  \(\supp(v)=\nu\) and \(\supp(w)=\omega\). Then \(h=\supp(P)\) for \(P\in \K_c^\pi\) homothetic to \(v+w\), \emph{i.e.} a rhombus.
\end{lemma}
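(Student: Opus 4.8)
The plan is to reduce to an elementary optimisation problem. By the discussion preceding the statement, the bi-infinite geodesic line connecting $\nu,\omega\in\Gamma$ consists (up to the normalising dilation that rescales to area $\pi$) of the support functions of the parallelograms $P_{a,b}=av+bw$ for $a,b>0$, where $v,w$ are the symmetric segments with $\supp(v)=\nu$, $\supp(w)=\omega$. Since $v,w$ are non-parallel (as $\nu\neq\omega$), the Minkowski sum $av+bw$ is a genuine non-degenerate parallelogram, and $\a(av+bw)=ab\,\a(v+w)$ scales bilinearly. Finding the nearest point projection of $\o$ to this geodesic amounts, by the computation $d_{\H^\infty}(\o,\supp(K))=\cosh^{-1}\bigl(\tfrac{1}{2\pi}\p(K)\bigr)$ and the monotonicity of $\cosh^{-1}$, to minimising the perimeter $\p(av+bw)$ subject to the area constraint $\a(av+bw)=\pi$.

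Next I would make this completely explicit. Write $v$ and $w$ as segments along unit vectors $u_1,u_2\in\SS^1$, with half-lengths (say) $\alpha,\beta$. Then $P_{a,b}=av+bw$ is the parallelogram with edge vectors $2a\alpha u_1$ and $2b\beta u_2$; its area is $4ab\alpha\beta\,|\sin\varphi|$, where $\varphi$ is the angle between $u_1$ and $u_2$, and its perimeter is $4(a\alpha+b\beta)$. So under the constraint $4ab\alpha\beta|\sin\varphi| = \pi$ — i.e. $ab$ fixed — we must minimise $a\alpha + b\beta$. By the AM--GM inequality, $a\alpha+b\beta\ge 2\sqrt{ab\alpha\beta}$, with equality precisely when $a\alpha=b\beta$. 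Hence the minimiser is the unique $(a,b)$ (up to the overall scaling already absorbed into the area constraint) for which the two pairs of parallel sides of $P_{a,b}$ have equal length, i.e. $P_{a,b}$ is a rhombus. This gives $h=\supp(P)$ with $P\in\K_c^\pi$ a rhombus, and since in the rhombus case $a\alpha=b\beta$ forces $av+bw$ to be homothetic to $v+w$ after we also rescale $v,w$ to have equal length — more directly, the unique point on the geodesic realising equality is homothetic to $v'+w'$ where $v',w'$ are the equal-length symmetric segments along $u_1,u_2$, which is exactly the statement ``homothetic to $v+w$'' once $v,w$ are the given unit segments — we are done.

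The only subtle point is matching the normalisation in the statement: the lemma asserts $h$ is homothetic to $v+w$ for the \emph{unit} symmetric segments $v,w$. This is immediate because $v+w$ is itself a rhombus (both edge pairs have the same length, namely the common length of the unit segments $v$ and $w$), so the minimiser $P$, being the area-$\pi$ rhombus with the same pair of edge directions, is homothetic to $v+w$; the scalars $a,b$ in the parametrisation are then simply equal, $a=b$. I expect the main (very minor) obstacle is bookkeeping the dilation implicit in ``the geodesic line consists of support functions of parallelograms of area $\pi$'': one should work with the unnormalised cone $\{av+bw\}$ where area scales freely, carry out the AM--GM optimisation there, and only at the end intersect with $\K_c^\pi$; this avoids any circularity between the constraint and the objective. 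Uniqueness of the nearest point is guaranteed a priori since $\H^\infty$ is $\operatorname{CAT}(-1)$ and the geodesic line is a closed convex subset, so the AM--GM equality case must indeed single out the projection.
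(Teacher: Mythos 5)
Your proposal is correct and follows essentially the same route as the paper: identify the geodesic with the normalised parallelograms $av+bw$, reduce the nearest-point projection to minimising the perimeter at fixed area via $d_{\H^\infty}(\o,\supp(K))=\cosh^{-1}\bigl(\tfrac{1}{2\pi}\p(K)\bigr)$, and apply AM--GM with $ab$ constant to conclude that the minimiser is the rhombus $a=b$. The paper's version is just the special case of your computation where $v,w$ are already taken to be unit segments, so $\p(av+bw)=2(a+b)$ and $\a(av+bw)=ab\det(v,w)=\pi$ directly.
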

\begin{proof}
    Note that \(\p(av+bw)=2(a+b)\) and \(\a(av+bw)=ab\det(v,w)=\pi\). By the AM-GM inequality, as \(ab=\pi/\det(v,w)\) is a constant, the quantity \(a+b\) is minimised if and only if \(a=b\), and it is when \(d_{\H^\infty}(h,\o)\) is minimised.
\end{proof}

It is proved in Proposition 4.4 in \cite{MP} that 
\(\rho(\psl)\) is the isometry group of \(C_\K\) using representation theory. Here below, we will provide an alternative proof with more geometric flavour.
\begin{proposition}\label{isom Ck}
The isometry group \(\Isom(C_\K)\) is isomorphic to \(\psl\) and is exactly the subgroup in \(\Isom(\H^\infty)\) that preserves \(C_\K\). 
\end{proposition}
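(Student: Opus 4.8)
\textbf{Proof strategy for Proposition~\ref{isom Ck}.}
The plan is to show the two inclusions \(\rho(\psl)\subseteq\Isom(C_\K)\) and \(\Isom(C_\K)\subseteq\rho(\psl)\), from which the isomorphism \(\Isom(C_\K)\cong\psl\) follows at once (faithfulness being Remark~\ref{faithful}), and simultaneously identify \(\Isom(C_\K)\) with the stabiliser of \(C_\K\) in \(\Isom(\H^\infty)\). The first inclusion is immediate: \(\rho(\psl)\) acts on \(\H^\infty\) by isometries and preserves \(C_\K\) by \eqref{eq:equivarience supp} together with the fact that \(\psl\) preserves area, so each \(\rho(A)\) restricts to an isometry of \(C_\K\). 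Since any isometry of \(C_\K\) that is the restriction of an ambient isometry obviously preserves \(C_\K\), it suffices for the second half to prove: every isometry of \(C_\K\) (a) extends to an isometry of \(\H^\infty\), and (b) that extension lies in \(\rho(\psl)\).

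For (a), the key point is Corollary~\ref{cor: CH}: \(C_\K=\CH_{\H^\infty}(\Gamma)\) is the closed convex hull of its boundary at infinity \(\Gamma\), and \(C_\K\) is a complete \(\mathrm{CAT}(-1)\) space. An isometry \(\phi\) of \(C_\K\) extends to a homeomorphism \(\partial_\infty\phi\) of \(\partial_\infty C_\K=\Gamma\); because \(\Gamma\) spans \(\H^\infty\) (it contains \(\iota(\H^2)\)'s boundary and, via Proposition~\ref{prop: CH1}, the support functions of segments span a dense subspace of \(H^1_{\operatorname{even}}\), hence span \(\H^\infty\) projectively), one can hope to reconstruct the ambient isometry. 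Concretely, I would use the identification of \(\Gamma\) with the projectivised isotropic cone of \(\A\): \(\Gamma\) is the image of unit symmetric segments \(\mathrm{S}\), i.e. of \(\mathbb{P}\R^2\simeq\SS^1\), and \(\phi\) acts on it; one shows that \(\phi\) preserves the cross-ratio / visual structure inherited from \(\H^\infty\), hence is induced by a linear map of \(H^1_{\operatorname{even}}\) preserving \(\A\) on \(\CH_{H^1}(\supp(\mathrm{S}))=\supp(\overline{\K}_c)\), and then by density (Corollary~\ref{cor: densite}) on all of \(H^1_{\operatorname{even}}\). This furnishes the extension.

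For (b), once we know \(\phi\) is induced by an \(\A\)-isometry \(\Phi\) of \(H^1_{\operatorname{even}}\) preserving \(\supp(\overline{\K}_c)\), I would argue that \(\Phi\) is induced by a linear map of \(\R^2\). The natural route is through the extreme structure: \(\Phi\) must permute the extreme rays of the cone \(\supp(\overline{\K}_c)\), and these are exactly the support functions of segments \(\supp(\mathrm{S})\) (a symmetric convex body is extreme in \(\overline{\K}_c\) under Minkowski addition iff it is a segment). So \(\Phi\) induces a homeomorphism of \(\mathrm{S}\cup\{0\}\simeq\) the space of lines-with-length; using additivity \eqref{eq:addition supp} and Lemma~\ref{lem Z=C} (every symmetric body is a limit of sums of segments) one recovers that \(\Phi\) is determined by its action on segments and is additive, hence — after checking it sends the family of directions compatibly — comes from an element \(A\in\mathrm{GL}_2(\R)\) acting on \(\R^2\). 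Area-preservation on \(C_\K\) forces \(|\det A|=1\), and central symmetry kills the sign, so \(A\in\psl\) and \(\phi=\rho(A)\). An alternative, perhaps cleaner, route for (b): \(\o\) is the circumcentre-type canonical point of no particular use, but \(\phi\) carries the unique \(\rho(\so)\)-type fixed configurations appropriately — more robustly, one can invoke that \(\iota(\H^2)\subset C_\K\) is intrinsically characterised (e.g. as the set of points through which every geodesic is a quasi-geodesic of \(\H^\infty\) with the sharp constant of \ref{T3}, or as the minimal surface, or as the orbit of \(\o\)), so \(\phi\) preserves \(\iota(\H^2)\); then \(\phi|_{\iota(\H^2)}\) is an isometry of \(\H^2\) (rescaled by Proposition~\ref{prop: curvature}), giving an element \(A\in\psl\) with \(\rho(A)^{-1}\phi\) fixing \(\iota(\H^2)\) pointwise, and Corollary~\ref{cor: CH} then forces \(\rho(A)^{-1}\phi=\mathrm{id}\) on \(C_\K\).

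\textbf{Main obstacle.} The delicate step is (a): showing that an abstract isometry of the infinite-dimensional convex set \(C_\K\) actually extends to the ambient \(\H^\infty\) — equivalently, that its boundary action on \(\Gamma\) is ``Möbius'' and thus projectively linear. In finite dimensions this is classical (isometries of a convex hull extend), but in the infinite-dimensional \(\mathrm{CAT}(-1)\) setting one must be careful; I expect to handle it by reducing to finite-dimensional totally geodesic subspaces (spanned by finitely many points of \(\iota(\H^2)\), using Corollary~\ref{cor: k dimension}) on which the classical extension theory applies, then passing to the limit using density of \(\CH(\Gamma)\) and continuity. The identification of the extension with an element of \(\rho(\psl)\) in step (b) is then comparatively soft, being an exercise in the extreme-ray structure of the cone of symmetric convex bodies.
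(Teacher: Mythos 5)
Your overall plan (two inclusions, with the hard direction being to pin down an arbitrary \(g\in\Isom(C_\K)\)) is sound, but the route is genuinely different from the paper's, and the difference matters: the paper never extends \(g\) to the ambient \(\H^\infty\). Its proof stays entirely inside \(C_\K\): by Corollary~\ref{cor: CH} and Remark~\ref{remark midpoint}, the bi-infinite geodesics of \(C_\K\) are exactly the one-parameter families of parallelograms spanned by two fixed directions, so \(g\) induces a map on parallelograms of area \(\pi\); applying \(g\) to the square \(P=\sqrt{\pi}[0,1]^2\) (located on its geodesic as a nearest-point projection via Lemma~\ref{lem: AM-GM}) produces a parallelogram \(gP\) whose unit side directions \((v_1,v_2)\) satisfy \(|\det(v_1,v_2)|=1\), and the assignment \(g\mapsto\bigl[(0,1),(1,0)\bigr]\mapsto(v_1,v_2)\) is the desired homomorphism \(\Isom(C_\K)\to\psl\) inverse to the inclusion. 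This sidesteps both of your steps (a) and (b). Your observation that the extreme rays of the cone \(\supp(\overline{\K}_c)\) under Minkowski addition are exactly \(\supp(\mathrm{S})\) (indecomposable plane bodies are segments and triangles, and only segments are symmetric) is correct and is a nice alternative way to see why an ambient linear \(\A\)-isometry preserving \(C_\K\) must permute directions; the paper instead reads this off from \(\partial_\infty C_\K=\Gamma\).

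The gap is that step (a) — which you rightly identify as the crux — is only gestured at, and the two reductions you propose are shaky. The finite-dimensional reduction "to subspaces spanned by finitely many points of \(\iota(\H^2)\)" presupposes that \(g\) interacts well with \(\iota(\H^2)\), which is precisely what your alternative route for (b) was meant to establish (and the characterisations of \(\iota(\H^2)\) you list — minimal surface, sharp quasi-geodesic constant, orbit of \(\o\) — all use either the ambient space or the group you are trying to determine, so they cannot be invoked before (a) is done). If you do want the extension route, the clean argument is Gram-matrix rigidity: \(g\) preserves \(\A(h,h')=\cosh d_{\H^\infty}(h,h')\) for \(h,h'\in C_\K\), the linear span of \(C_\K\) is dense in \(H^1_{\operatorname{even}}\) (Corollary~\ref{cor: densite}) and \(\A\) is non-degenerate, so \(\sum c_ih_i\mapsto\sum c_ig(h_i)\) is a well-defined \(\A\)-preserving linear map that extends continuously; this replaces the cross-ratio/Möbius discussion entirely. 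Finally, in (b) the step from "\(\Phi\) permutes \(\supp(\mathrm{S})\) and preserves \(\A\)" to "\(\Phi\) comes from \(\mathrm{GL}_2(\R)\)" still needs an argument: you must show that a continuous bijection \(f\) of directions together with a length-scaling \(c(\theta)\) satisfying \(c(\theta_1)c(\theta_2)\,|\sin(f(\theta_1)-f(\theta_2))|=|\sin(\theta_1-\theta_2)|\) is induced by a linear map. This is true but not free, and it is essentially the computation the paper packages into Lemma~\ref{lem: AM-GM} and the square \(P\). So: correct skeleton, genuinely different and heavier route, with the decisive extension step left unproved.
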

\begin{proof}
For the metric induced over \(C_\K\) by the hyperbolic metric, any element in \(\rho(\psl)\) induces an isometry on \(C_\K\). Moreover, \(\rho\) is faithful (Remark~\ref{faithful}), hence  
 we can view \(\psl\) as a subgroup of \(\Isom(C_\K)\). Now it suffices to show it is the entire group.

Given any \(g\in \Isom(C_\K)\), it sends bi-infinite geodesics to bi-infinite geodesics, {\it i.e.} it may be identified with a bijection among symmetric parallelograms (a sum of two segments) of area \(\pi\). More specifically, let \(P=\sqrt{\pi}[0,1]^2\) be the centred square in \(\K_c^\pi\). By Lemma~\ref{lem: AM-GM}, it is the nearest point projection of \(\o\) to a bi-infinite geodesic. Since \(g\) is an isometry, then \(gP\) will be nearest point projection of \(g\o\) to the \(g\)-image of that bi-infinite geodesic, which is again a parallelogram. Let \(v_1,v_2\) be the two independent vectors such that \(\sqrt{\pi}v_1\) and \(\sqrt{\pi}v_2\) are the vectors of two sides of the parallelogram \(gP\) and such that the symmetric unit segment parallel to \(v_1\) (resp. \(v_2\)) is the \(g\)-image of the vertical (resp. horizontal) symmetric unit vector, viewed as elements in \(\Gamma\). In particular, we have \(|\det(v_1,v_2)|=\a(gP)/\pi=1\). Now the isometry \(g\) can be associated to the unique element in \(\psl\) sending \(\big((0,1),(1,0)\big)\) to \((v{_1,v_2})\). Moreover, this gives a well-defined group homomorphism \(\Isom(C_\K)\to\psl\), of which the inverse is the inclusion \(\psl\hookrightarrow\Isom(C_\K)\).
\end{proof}


Classical properties of convex bodies immediately translate into the following result. The first part was proved in  Proposition 4.3 in \cite{MP}. Due to our characterisation of the convex hull, we can obtain a more precise description.

\begin{theorem}\label{prop BM}
The action of \(\psl\) on \(C_\K\) is proper and cocompact. Moreover, the quotient of \(C_\K\)  is homeomorphic to the 2-dimensional oriented Banach--Mazur compactum.
\end{theorem}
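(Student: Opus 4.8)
The plan is to prove the two assertions of Theorem~\ref{prop BM} separately: first properness and cocompactness of the $\psl$-action on $C_\K$, and then the identification of the quotient with the $2$-dimensional oriented Banach--Mazur compactum.

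\textbf{Properness and cocompactness.}
First I would observe that by Proposition~\ref{rem: supp homeo} (and Proposition~\ref{prop:Ck ferme}), $\supp^\pi\colon(\K_c^\pi,d_{\operatorname{Haus}})\to(C_\K,d_{\H^\infty})$ is a $\psl$-equivariant homeomorphism, so it suffices to establish properness and cocompactness of the $\psl$-action on $(\K_c^\pi,d_{\operatorname{Haus}})$, where everything is more classical. For cocompactness, the key point is John's ellipsoid theorem: every symmetric convex body $K$ contains a (unique) ellipse $E$ of maximal area with $E\subset K\subset \sqrt{2}\,E$ (the planar symmetric case of John's theorem). Rescaling, every $K\in\K_c^\pi$ is $\psl$-equivalent to some $K'$ with $D\subset K'\subset \sqrt{2}\,D$; the set of such $K'$ is closed and bounded for $d_{\operatorname{Haus}}$, hence compact by Blaschke's selection theorem (Proposition~\ref{prop:blaschke}). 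So a compact set surjects onto $\K_c^\pi/\psl$, giving cocompactness. For properness, I would show that for any compact $L\subset \K_c^\pi$ the set $\{A\in\psl : AL\cap L\neq\emptyset\}$ is compact (precompact and closed, the latter by continuity of the action). If $AK_1=K_2$ with $K_1,K_2\in L$, then applying John's theorem to $K_1$ and $K_2$ and using uniqueness of the John ellipse, $A$ maps the John ellipse of $K_1$ to that of $K_2$; since $L$ is compact these John ellipses range over a compact family of elements of $\Ec$, and by Lemma~\ref{lem E plongement} (properness of $\operatorname{E}$) the set of $A\in\psl$ carrying one to another is precompact. Alternatively, and perhaps more cleanly, properness is immediate from the general fact that the $\psl$-action on $\H^\infty$ is proper (which itself follows because $\iota$ is a quasi-isometric embedding, Proposition~\ref{prop: quasi iso}, so an orbit in $\H^\infty$ is discrete and cocompact stabilizers are compact) together with the fact that $C_\K$ is a $\psl$-invariant subset with the induced (proper, by Proposition~\ref{prop:Ck ferme}) metric.

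\textbf{Identification of the quotient.}
Here the point is purely set-theoretic plus a check that the topology matches. By definition (see the last paragraph of the Introduction and \cite{BM3}), the $2$-dimensional oriented Banach--Mazur compactum is the quotient of the space of symmetric plane convex bodies by the action of $\mathrm{GL}_2^+(\R)$. Since scaling a body by $\lambda>0$ multiplies its area by $\lambda^2$, every $\mathrm{GL}_2^+(\R)$-orbit of a body with nonempty interior contains exactly a full $\psl$-orbit's worth of area-$\pi$ representatives: concretely, writing $g\in\mathrm{GL}_2^+(\R)$ as $g=\lambda A$ with $\lambda=(\det g)^{1/2}>0$ and $A\in\sl$, the map $K\mapsto (\det g)^{-1/2}\cdot$(area-normalization) identifies $\{$symmetric bodies$\}/\mathrm{GL}_2^+(\R)$ with $\K_c^\pi/\psl$. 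Thus as sets $C_\K/\psl \cong \K_c^\pi/\psl$ is the oriented Banach--Mazur compactum. To finish I would check the homeomorphism is topological: the quotient topology on $\K_c^\pi/\psl$ from $d_{\operatorname{Haus}}$ agrees with the Banach--Mazur topology; since both are compact Hausdorff (compactness just shown, Hausdorffness because the action is proper so orbits are closed and the quotient of a proper action on a locally compact Hausdorff space is Hausdorff) and the natural bijection is continuous, it is a homeomorphism. One should also note the quotient is compact, consistent with the classical fact that the Banach--Mazur compactum is indeed compact.

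\textbf{Main obstacle.}
The genuinely substantive input is John's ellipsoid theorem, which is what powers cocompactness (every area-$\pi$ body is $\psl$-close to the disc) and, via uniqueness of the John ellipse, gives a clean proof of properness. Everything else is bookkeeping: translating between $C_\K$ and $\K_c^\pi$ via Proposition~\ref{rem: supp homeo}, between $\psl$ and $\mathrm{GL}_2^+(\R)$ via the area scaling, and matching the quotient topologies using compactness plus Hausdorffness. The one place requiring a little care is verifying that the bijection $C_\K/\psl\to$ (oriented Banach--Mazur compactum) respects topology rather than just underlying sets; this is where I would invoke that a continuous bijection from a compact space to a Hausdorff space is a homeomorphism, after checking Hausdorffness of the quotient follows from properness of the action.
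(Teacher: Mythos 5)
Your proof is correct and follows essentially the same route as the paper: the paper defers to the classical literature (Benz\'ecri-type compactness and the Banach--Mazur compactum), whose proofs rest on precisely the continuous ellipse-retraction that you implement concretely via John's theorem, and your quotient identification and topology-matching are the same bookkeeping the paper leaves implicit. The only quibble is the normalization in the cocompactness step: sending the John ellipse of \(K\in\K_c^\pi\) to a disc by an element of \(\sl\) (without changing the area) yields \(\tfrac{1}{\sqrt2}D\subset K'\subset\sqrt2\,D\) rather than \(D\subset K'\subset\sqrt2\,D\), which still gives a compact set meeting every orbit, so the argument is unaffected.
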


Indeed, it is classical that the action of the affine group on convex bodies (with non-empty interior) gives a Hausdorff  compact quotient (a \emph{compactum}). It is for example a step in the proof of Benzécri’s compactness theorem about convex bodies in the projective plane, see \cite{goldman}. It also appears in the study of the (2-dimensional) Banach--Mazur compactum. Recall that it is the set of isometry classes of 2-dimensional Banach spaces, endowed with a natural distance. It is a compact metric space. It is naturally homeomorphic to the quotient of \(C_\K\) (endowed with the topology given by the Hausdorff distance) by \(\operatorname{GL}_2(\R)\). Its topological properties are well studied, see e.g. \cite{BM3,BM2,BM1,BM4}. See \cite{Macbeath} for a different approach.

As we are considering equivalence classes of convex bodies with fixed area for the action of \(\sl\), our quotient is homeomorphic to the quotient of the space of convex bodies up to orientation preserving linear maps, that is a double cover of the Banach--Mazur compactum. It is called the \emph{oriented Banach--Mazur compactum} in \cite{BM3}. Those facts immediately yield Theorem~\ref{prop BM}.

In the references mentioned above, all the proofs are based on a continuous retract by deformation from the space of convex bodies to the space of ellipses (different ellipses can be successfully associated to a convex body, and their denominations are not uniform in the literature, see \cite[Section 10.12]{schneider}).

\subsection{Hausdorff dimension of the limit set}

    Since \(\Gamma\) is the boundary at infinity of \(\iota(\H^2)\subset\H^\infty\) and \(\rho(\psl)\) acts transitively on \(\iota(\H^2)\), it is clear that \(\Gamma\) is the limit set of \(\rho(\psl)\). In this subsection, we will compute explicitly its Hausdorff dimension with respect to the visual distance on \(\partial_\infty\H^\infty\).
    
    Let \(D_{\o}\) be the visual distance on \(\partial_\infty\H^\infty\) based at \(\o\). For our hyperbolic space, it has a very simple expression: if \(\zeta,\eta\in \partial_\infty\H^\infty\), \begin{equation}\label{eq:Do}D_{\o}(\zeta,\eta)=\sin\left(\frac{1}{2}\angle_{\o}(\zeta,\eta)\right)~\end{equation}
    where \(\angle_{\o}(\zeta,\eta)\in[0,\pi]\) is the angle for the hyperbolic metric 
    between the unique geodesic rays starting at \(\o\) toward \(\zeta\) and \(\eta\) respectively, see \cite[Section~3.5]{DSU}. 

Recall that \(\Gamma\subset \partial_\infty \H^\infty\) is made of isotropic vectors for \(\A\) that correspond to directions in the plane (\emph{i.e.}, each direction is a homothety class of symmetric segments).
We will denote by \(\angle(\zeta,\eta)\in [0,\pi]\) their angle in the plane.

\begin{proposition}\label{prop: visual dist}
For two elements \(\zeta,\eta\) in \(\Gamma\),
then
\[D_{\o}(\zeta,\eta)=\frac{\sqrt{\pi}}{2}\sqrt{\sin (\angle(\zeta,\eta))}~.\]
\end{proposition}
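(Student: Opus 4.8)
The plan is to reduce the computation of the visual distance to a computation of the hyperbolic angle at $\o$ between the two geodesic rays pointing towards $\zeta$ and $\eta$, and then to identify that angle via the nearest-point projection of $\o$ onto the bi-infinite geodesic joining $\zeta$ and $\eta$ (which we already understand by Lemma~\ref{lem: AM-GM}). Concretely, write $\zeta = \supp(v)$ and $\eta = \supp(w)$ with $v,w$ unit symmetric segments, so that $\angle(\zeta,\eta)$ is the planar angle between their directions and $\det(v,w) = \pm\sin(\angle(\zeta,\eta))$. By Lemma~\ref{lem: AM-GM} the midpoint (nearest-point projection of $\o$) $h$ on the geodesic $(\zeta,\eta)$ is $\supp(P)$ for $P$ homothetic to $v+w$ with $\a(P)=\pi$; since $\a(v+w) = \det(v,w)$, the normalizing factor is $\lambda$ with $\lambda^2\det(v,w) = \pi$, i.e.\ $h = \sqrt{\pi/\sin(\angle(\zeta,\eta))}\,\supp(v+w)$.

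\textbf{Key steps.} First I would record the general fact in a $\mathrm{CAT}(-1)$ space that if $h$ is the nearest-point projection of the basepoint $\o$ onto the geodesic line $\gamma$ joining $\zeta,\eta\in\partial_\infty\H^\infty$, then the two rays $[\o,\zeta)$ and $[\o,\eta)$ make equal angles $\alpha$ with the ray $[\o,h]$ and $\angle_\o(\zeta,\eta) = 2\alpha$ — this is a standard consequence of the reflection symmetry swapping $\zeta$ and $\eta$ that fixes $\o$ and $h$. Then I would compute $\cos\alpha$ using the first-variation/Gram-matrix formula available from the Lorentzian model: represent $\o$, $h$ as unit timelike vectors and $\zeta$, $\eta$ as isotropic vectors, and use that in the hyperboloid model the hyperbolic angle at $\o$ between the geodesic to $h$ and the geodesic to an isotropic vector $\xi$ is governed by the normalized projections of $\o^\perp$-components. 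The cleanest route: pick explicit coordinates. Take $v = (\cos s, \sin s)$-type directions so that, after a rotation, $v$ and $w$ are symmetric about the $x$-axis with half-angle $\beta = \tfrac12\angle(\zeta,\eta)$; then $\supp(v)(\theta) = |\cos(\theta - s)|$-type expressions, and one evaluates $\A(\supp(v),\o)$, $\A(\supp(w),\o)$, $\A(\supp(v),\supp(w))$ directly as explicit integrals. From the isotropy $\A(\supp(v)) = \A(\supp(w)) = 0$ and a short computation of $\A(\supp(v),\o)$ (which is $\frac{1}{2\pi}\int_0^{2\pi}\supp(v)$, the half-perimeter of a unit segment over $2\pi$, a constant independent of direction, equal to $\frac{2}{\pi}$... actually $\frac1{2\pi}\int_0^{2\pi}|\langle v,\cdot\rangle| = \frac{2}{\pi}$) and of $\A(\supp(v),\supp(w))$ in terms of $\angle(\zeta,\eta)$, one extracts $\cos\alpha$ and hence $\sin(\alpha) = \sin(\tfrac12\angle_\o(\zeta,\eta))$, and then plugs into \eqref{eq:Do}.

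\textbf{Alternative (cleaner) route and main obstacle.} Rather than juggling isotropic vectors, I would instead compute the angle at $\o = \iota(i)$ between the two geodesic rays \emph{inside} the ambient space by using the embedding $\iota$ and the metric comparison: the ray $[\o,\zeta)$ is a limit of geodesic segments $[\o,\iota(x_n)]$ with $x_n\to\zeta$ along the corresponding direction in $\H^2$, and their initial tangent directions are computed from the explicit partial derivatives $\partial\iota/\partial t$, $\partial\iota/\partial s$ already worked out in Proposition~\ref{prop : smooth}, together with the fact (Proposition~\ref{prop: quasi iso}, Proposition~\ref{prop: curvature}) relating $\H^2$-geodesics through $i$ to asymptotic rays in $\H^\infty$. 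The angle $\angle_\o(\zeta,\eta)$ then equals the $g_\iota$-angle at $i$ between the two $\H^2$-geodesic rays towards the corresponding boundary points, but one must be careful: $\iota$ is \emph{not} totally geodesic, so the $\H^\infty$-geodesic ray to $\zeta$ is \emph{not} the $\iota$-image of the $\H^2$-ray, and the relevant asymptotic direction must be computed in $\H^\infty$ directly. This is exactly the main obstacle: one cannot simply transport angles through $\iota$; one has to honestly compute the $\H^\infty$-angle, which forces the Lorentzian computation sketched above. I expect the cleanest presentation to be: normalize by a rotation so $\zeta,\eta$ are symmetric about a coordinate axis, compute the three relevant values of $\A$ as elementary trigonometric integrals, solve for $\cos\big(\tfrac12\angle_\o(\zeta,\eta)\big)$, and simplify $\sin\big(\tfrac12\angle_\o(\zeta,\eta)\big)$ to $\frac{\sqrt\pi}{2}\sqrt{\sin\angle(\zeta,\eta)}$ — the final algebraic simplification using a half-angle identity is where the constant $\sqrt\pi/2$ emerges, tracing back to the normalization $\a(D)=\pi$.
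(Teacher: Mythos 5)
Your main route (the ``Key steps'') is correct and is essentially the paper's own proof: both compute \(\cos\angle_{\o}(\zeta,\eta)\) from the \(\A\)-Gram matrix of normalized isotropic representatives of \(\zeta,\eta\), use isotropy to reduce everything to \(\A(\supp(v),\supp(w))\), identify that quantity with the area of the parallelogram \(v+w\) (equivalently your explicit trigonometric integral), and extract the constant \(\frac{\sqrt{\pi}}{2}\) from the half-angle identity exactly as you predict. The detour through the nearest-point projection \(h\) and the bisected angle \(\alpha\) is valid but unnecessary --- the paper gets \(\sin\bigl(\tfrac12\angle_{\o}(\zeta,\eta)\bigr)\) directly from \(\cos\angle_{\o}(\zeta,\eta)\) via \(\sin(x/2)=\sqrt{(1-\cos x)/2}\) --- and you rightly discard the ``alternative route'' through \(\iota\), which indeed fails because \(\iota\) is not totally geodesic.
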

\begin{proof}
We write  \eqref{eq:Do}
under the form
\[D_{\o}(\zeta,\eta)=\sqrt{\frac{1-\cos(\angle_{\o}(\zeta,\eta))}{2}}\]
and we will compute the cosine of the angle.

 The points \(\zeta,\eta\in \partial_\infty\H^\infty\) correspond to two isotropic lines for \(\A\). Let \(v_1,v_2\) be two vectors colinear to those lines, such that \(\pi_0(v_i)= \A(\1,v_i)=1\). The vectors \(v_i-\1\) are orthogonal to \(L_0\) and may be identified with vectors of \(T_{\o}\H^\infty\), tangent to the geodesic rays of \(\H^ \infty\) from \(\1\) to \(\zeta\) and \(\eta\).
We have
\[\A(v_1-\1,v_2-\1)=\A(v_1,v_2)-1=\frac{1}{2}\A (v_1+v_2)-1\]
where the last equality holds because the vectors are isotropic. Similarly, \(\A(v_1-\1)=-1\), hence
\[\cos(\angle_{\o}(\zeta,\eta))=\frac{-\A(v_1-\1,v_2-\1)}{\sqrt{-\A(v_1-\1)}\sqrt{-\A(v_2-\1)}}=1-\frac{1}{2}\A (v_1+v_2)~,\]
and
\[D_{\o}(\zeta,\eta)=\frac{1}{2}\sqrt{\A(v_1+v_2)}~.\]

By Remark~\ref{eq:area equality}, \(\A (v_1+v_2)\) is \(\frac{1}{\pi}\) times the area of the plane parallelogram obtained by adding the segments directed by the direction determined by \(\zeta\) and \(\eta\). By \eqref{eq:perimeter}, those segments have perimeter \(2\pi\pi_0(v_i)=2\pi\), hence length \(\pi\). In turn, the area of the parallelogram is 
 \(\pi^2\sin(\angle(\zeta,\eta))\).
\end{proof}

We recall that for a compact metric space \((K,d)\), its {\it Hausdorff dimension} with respect to the distance \(d\) is defined as
\[\Hdim_d(K)\coloneqq\inf\{\delta>0 \mid H^\delta (K)=0\}~,\]
where
\[H^\delta (K)=\lim_{r\to 0+} \inf\big\{\sum_{i=1}^n \diam(U_i)^\delta \mid \bigcup_{i=1}^n U_i\supset K \ \operatorname{and}\ \diam(U_i)< r\big\}~.\]

\begin{remark}\label{remark Hdim}
Note that if \(0< t\leq 1\) and \(\lambda>0\), for a distance \(d\), \(\lambda d^t\) is also a distance. Moreover, one can easily check from the definition that
\[\Hdim_{\lambda d^t}(K)=\frac{1}{t}\Hdim_d(K)\]
for \(0<t\leq 1\) and \(\lambda>0\).
\end{remark}

From the explicit formula of the visual distance \(D_\o\) in Proposition~\ref{prop: visual dist}, we can further deduce the Hausdorff distance of \(\Gamma\):
\begin{corollary}\label{cor:haus}
The Hausdorff dimension of \(\Gamma\) for the visual distance \(D_\1\) on \(\partial_\infty\H^\infty\) is \(2\).
\end{corollary}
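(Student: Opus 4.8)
The plan is to read off the Hausdorff dimension of $\Gamma$ directly from the explicit formula of Proposition~\ref{prop: visual dist}, using the scaling behaviour of Hausdorff dimension recorded in Remark~\ref{remark Hdim}. The first step is to describe $(\Gamma, D_\o)$ up to a bi-Lipschitz change of metric. As a set, $\Gamma$ is the set of directions of the plane, which I parametrize by the circle $\mathbb{R}/\pi\mathbb{Z}$ via $[\alpha]\mapsto$ the line spanned by $(\cos\alpha,\sin\alpha)$; under this parametrization, the planar angle $\angle(\zeta,\eta)$ (taken in $[0,\pi/2]$, the angle between the two lines) equals the quotient-circle distance between the representatives. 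In particular $(\Gamma,\angle)$ is isometric to a metric circle, and I would record that $\Hdim_\angle(\Gamma)=1$: covering $\Gamma$ by $N$ arcs of diameter $\asymp \pi/N$ gives $\Hdim_\angle(\Gamma)\le 1$, while the $1$-Lipschitz surjection $[\alpha]\mapsto\angle([\alpha],[\alpha_0])$ of $\Gamma$ onto an interval gives $\Hdim_\angle(\Gamma)\ge 1$.

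Next I would convert Proposition~\ref{prop: visual dist} into the comparison $D_\o\asymp\angle^{1/2}$. Indeed, from $\tfrac{2}{\pi}x\le\sin x\le x$ for $x\in[0,\pi/2]$ together with the identity $D_\o(\zeta,\eta)=\tfrac{\sqrt\pi}{2}\sqrt{\sin\angle(\zeta,\eta)}$ one obtains
\[\tfrac{1}{\sqrt 2}\,\angle(\zeta,\eta)^{1/2}\le D_\o(\zeta,\eta)\le\tfrac{\sqrt\pi}{2}\,\angle(\zeta,\eta)^{1/2}\quad\text{for all }\zeta,\eta\in\Gamma.\]
By Remark~\ref{remark Hdim} (applied with $d=\angle$, $t=\tfrac12$, $\lambda=1$) the function $\angle^{1/2}$ is itself a distance on $\Gamma$, so the displayed inequalities assert precisely that $D_\o$ and $\angle^{1/2}$ are bi-Lipschitz equivalent metrics on $\Gamma$. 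Hausdorff dimension is a bi-Lipschitz invariant, and Remark~\ref{remark Hdim} also gives $\Hdim_{\angle^{1/2}}(\Gamma)=2\,\Hdim_\angle(\Gamma)$, whence
\[\Hdim_{D_\o}(\Gamma)=\Hdim_{\angle^{1/2}}(\Gamma)=2\,\Hdim_\angle(\Gamma)=2.\]

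I do not expect any genuine obstacle: once Proposition~\ref{prop: visual dist} is in hand the result is elementary. The two points that deserve a sentence of care are that $(\Gamma,\angle)$ is a genuine circle — so that its Hausdorff dimension is exactly $1$, which is what delivers both inequalities in the final computation rather than only the upper bound — and that the square root appearing in the visual distance does not cause a degeneracy: the angle between two distinct directions lies in $(0,\pi/2]$, which is bounded away from the zeros of $\sin$, so $D_\o$ really is comparable to $\angle^{1/2}$ and not merely dominated by it.
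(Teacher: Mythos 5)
Your proposal is correct and follows essentially the same route as the paper: the paper's (much terser) proof also observes that $\sin(\angle(\zeta,\eta))$ is comparable to the angle distance on the circle, which has Hausdorff dimension one, and then invokes Remark~\ref{remark Hdim} with $t=\tfrac12$ to double the dimension. Your version simply supplies the details the paper leaves implicit — the bi-Lipschitz comparison $D_\o\asymp\angle^{1/2}$ via $\tfrac{2}{\pi}x\le\sin x\le x$ on $[0,\pi/2]$, and the verification that $(\Gamma,\angle)$ genuinely has dimension one — all of which are correct.
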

\begin{proof}
The distance \(\sin (\angle(\zeta,\eta))\) is at first order the distance \(\angle(\zeta,\eta)\) on the round circle, which makes the circle of Hausdorff dimension one. Then the desired result follows from Remark~\ref{remark Hdim}.
\end{proof}

More generally, if we consider other formulae of the visual distance in real hyperbolic space, then the result of the Hausdorff dimension of the exotic embedding of the boundaries \(\partial_\infty \H^n\to \partial_\infty \H^\infty\) will become a consequence of the quasi-isometry. Indeed, recall again that for \(n\geq 2\), the exotic representations of \(\Isom(\H^n)\to\Isom(\H^\infty)\) are completely classified in \cite{MP} and are parametrised by \(0<t<1\). Each exotic representation induces an orbit map \(f^n_t\colon \H^n\to \H^\infty\) verifying
\begin{equation}\label{eq: qi exotic}
\left|d_{\H^\infty}(f^n_t(x) ,f^n_t(y) )-td_{\H^n}(x,y)\right|\leq C
\end{equation}
for some constant \(C>0\) and any \(x,y\in \H^n\). Recall also that the \emph{Gromov product} on \(\H^n\times \H^n\) is defined as
\[\langle x,y\rangle_z\coloneqq \frac 1 2 \big(d_{\H^n}(x,z)+d_{\H^n}(y,z)-d_{\H^n}(x,y)\big)\]
and can extends to \(\partial_\infty \H^n\) by
\[\langle \xi,\eta\rangle_z\coloneqq \lim_{\substack{x\to \xi\\ y\to \eta}}\langle x,y\rangle_z~.\]
The same definition is valid for \(\H^\infty\) as well, see again \cite[Section~3.5]{DSU}.

\begin{proposition}\label{prop: hausdorff dim general}
    Let \(f^n_t\colon\H^n\to\H^\infty\) be the quasi-isometric embedding as above. Then \(f^n_t\) extends continuously to the boundary \(\partial f^n_t\colon \partial_\infty \H^n\hookrightarrow \partial_\infty\H^\infty\). Moreover, the Hausdorff dimension of \(\partial f^n_t(\partial_\infty\H^n)\) with respect to the visual distance on \(\partial_\infty\H^\infty\) is \(\frac{n-1}{t}\).
\end{proposition}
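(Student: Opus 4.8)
The plan is to reduce the statement to two facts: a quasi-isometric embedding between $\mathrm{CAT}(-1)$ spaces induces a topological embedding of Gromov boundaries that distorts visual distances in a controlled, "snowflake" fashion governed by the multiplicative constant $t$; and $\partial_\infty\H^n$, with any visual metric, is bi-Lipschitz to the round sphere $\mathcal{S}^{n-1}$. For the first point, recall that $\H^n$ and $\H^\infty$ are geodesic $\mathrm{CAT}(-1)$, hence Gromov hyperbolic, and that the Morse lemma (stability of quasi-geodesics) holds in every geodesic $\delta$-hyperbolic space, properness being irrelevant; thus $f^n_t$ carries geodesic rays to quasi-geodesic rays lying within bounded Hausdorff distance of genuine geodesic rays of $\H^\infty$, and sending endpoints to endpoints defines $\partial f^n_t\colon\partial_\infty\H^n\to\partial_\infty\H^\infty$, with $f^n_t$ together with $\partial f^n_t$ continuous on the bordifications by standard coarse geometry, see \cite[Chapter III.H]{BH} or \cite{DSU}.

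Next I would estimate the distortion of Gromov products. Fix $z_0\in\H^n$. Recall (see \cite[Section 3.5]{DSU}, which also underlies \eqref{eq:Do}) that $D_\o$ is bi-Lipschitz equivalent, on all of $\partial_\infty\H^\infty$, to $e^{-\langle\cdot,\cdot\rangle_\o}$, and likewise the visual metric $D_{z_0}$ on $\partial_\infty\H^n$ to $e^{-\langle\cdot,\cdot\rangle_{z_0}}$. Applying \eqref{eq: qi exotic} to the three pairs among $x,y,z_0\in\H^n$ gives, with an absolute implied constant,
\[\big\langle f^n_t(x),f^n_t(y)\big\rangle_{f^n_t(z_0)}=t\,\langle x,y\rangle_{z_0}+O(C).\]
Letting $x\to\xi$, $y\to\eta$: each Gromov product converges to its boundary value (we are in $\mathrm{CAT}(-1)$), and replacing the basepoint $f^n_t(z_0)$ by $\o$ changes a Gromov product by at most the fixed number $d_{\H^\infty}(\o,f^n_t(z_0))$ since Busemann functions are $1$-Lipschitz; hence
\[\big\langle \partial f^n_t(\xi),\partial f^n_t(\eta)\big\rangle_{\o}=t\,\langle\xi,\eta\rangle_{z_0}+O(1)\]
with an implied constant independent of $\xi,\eta\in\partial_\infty\H^n$. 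The left-hand side is finite for $\xi\neq\eta$, so $\partial f^n_t$ is injective; and exponentiating together with the two bi-Lipschitz comparisons yields $L\geq 1$ with $L^{-1}D_{z_0}(\xi,\eta)^{t}\leq D_\o(\partial f^n_t(\xi),\partial f^n_t(\eta))\leq L\,D_{z_0}(\xi,\eta)^{t}$ for all $\xi,\eta\in\partial_\infty\H^n$.

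To conclude: since $0<t<1$, the function $D_{z_0}^{\,t}$ is again a distance on $\partial_\infty\H^n$, and the last comparison says $\partial f^n_t$ is a bi-Lipschitz homeomorphism from $(\partial_\infty\H^n,D_{z_0}^{\,t})$ onto its image. As Hausdorff dimension is a bi-Lipschitz invariant, Remark~\ref{remark Hdim} gives
\[\Hdim_{D_\o}\big(\partial f^n_t(\partial_\infty\H^n)\big)=\Hdim_{D_{z_0}^{\,t}}(\partial_\infty\H^n)=\tfrac1t\,\Hdim_{D_{z_0}}(\partial_\infty\H^n)=\tfrac{n-1}{t},\]
the last equality because $D_{z_0}$, given by the same formula as \eqref{eq:Do}, is comparable to the round metric $\angle_{z_0}$ on $\partial_\infty\H^n\cong\mathcal{S}^{n-1}$ (as $\sin(\tfrac12\theta)$ is comparable to $\theta$ on $[0,\pi]$), which has Hausdorff dimension $n-1$. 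Specialising $n=2$ and $t=\tfrac12$ recovers Corollary~\ref{cor:haus}.

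The main obstacle is the uniformity in $\xi,\eta$ of the several bounded errors in the second step — the coarseness of the boundary Gromov product in a general $\delta$-hyperbolic space, and the basepoint change from $f^n_t(z_0)$ to $\o$ — which is precisely what turns the additive $O(1)$ on Gromov products into a genuine multiplicative constant for $D_\o$, i.e. the global "$t$-snowflake" comparison that pins the Hausdorff dimension to $\frac{n-1}{t}$. In the $\mathrm{CAT}(-1)$ setting all of these errors are controlled by the standard hyperbolic geometry of $\H^\infty$ recalled in \cite{DSU}, so no genuinely new difficulty arises; everything else is routine bookkeeping.
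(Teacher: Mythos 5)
Your argument is correct and follows essentially the same route as the paper: both compare the snowflaked visual metric $(D_{z_0})^t$ on $\partial_\infty\H^n$ with $D_\o$ on the image via the Gromov product and the quasi-isometry inequality \eqref{eq: qi exotic}, and then conclude with the scaling property of Hausdorff dimension from Remark~\ref{remark Hdim}. You merely spell out the steps the paper compresses into the symbol $\approx$ (Morse stability for the boundary extension, the $O(1)$ distortion of Gromov products, and the basepoint change), which is harmless.
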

\begin{proof}
    Let \(D_o^n\) be the visual distance on \(\partial_\infty \H^n\) with respect to the unique fixed point \(o\in \H^n\). Let us assume for consistency \(f_t^n(o)=\o\in \H^\infty\) and let \(D_\1\) be the visual distance as above. Recall from \cite[Section~3.5]{DSU} that for \(\xi,\eta\in \partial_\infty\H^n\),
    \[D_o^n(\xi,\eta)=e^{-\langle \xi,\eta\rangle_o}\]
    where \(\langle \xi,\eta\rangle_o\) is the Gromov product in \(\H^n\). By \eqref{eq: qi exotic}, the embedding \(f_t^n\) is quasi-isometric, thus extends to the boundary. The continuity of \(\partial f_t^n\) can be seen from the comparison
    \[\big(D^n_o(\xi,\eta)\big)^t=e^{-t\langle \xi,\eta\rangle_o}\approx e^{-\langle \partial f_t^n(\xi),\partial f_t^n(\eta)\rangle_\1}=D_\o\big( \partial f_t^n(\xi),\partial f_t^n(\eta)\big)~,\]
    where \(A\approx B\) means that there exists a constant \(c>1\) such that \(c^{-1}B\leq A\leq c B\). In turn, the two distances \((D^n_o)^t\) and \(D_\o\) are bi-Lipschitz and thus \( \partial f_t^n\) is a continuous map. Moreover, as \( \partial f_t^n(\partial_\infty\H^n)\) is the continuous image of a compact set, it is itself compact and we can compute its Hausdorff dimension. Since \(\Hdim_{D_o^n}(\partial_\infty \H^n)=n-1\), the desired result soon follows from Remark~\ref{remark Hdim}.
\end{proof}

\section{Comparison with different constructions}\label{last sec}

In this section, we will characterise the hyperbolic embedding of convex bodies in more detail. We then make a connection with kernels of hyperbolic type,
and end this section by comparing our construction of the exotic representations with the others.

\subsection{Uniqueness of the embedding}\label{uniq embedding}

Given a continuous representation \(r\colon \psl \to \Isom(\H^\infty)\), if there exists a unique fixed point \(o\in \H^\infty\) of \(r(\so)\), then the orbit map of \(o\) is the only \(r\)-equivariant map. By Lemma~\ref{lemma:* fixed point}, we can deduce:

\begin{lemma}\label{lemma: uniqueness of map}
The map \(\supp\big|_{\Ec}\colon \Ec\to \mathbb{H}^\infty\) is the only \(\rho\)-equivariant map from \(\Ec\) into \(\H^\infty\). \qeda
\end{lemma}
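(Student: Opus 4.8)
The plan is to invoke the general principle that an equivariant map from a homogeneous space is determined by the image of a single point, which in turn is pinned down by the stabilizer. Concretely, since $\psl$ acts transitively on $\Ec$ (Remark~\ref{remark:orbit ellipse}), any $\rho$-equivariant map $F\colon \Ec\to\H^\infty$ is completely determined by the value $F(D)$: indeed, for an arbitrary ellipse $E=AD$ with $A\in\psl$, equivariance forces $F(E)=F(AD)=\rho(A)F(D)$. So the statement reduces to showing that $F(D)$ is forced to equal $\o=\supp(D)$.

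The key step is to observe that $D$ is fixed by $\so\subset\psl$ (the stabilizer of the centred unit disc is exactly $\so$), hence equivariance of $F$ gives, for every $R\in\so$, $\rho(R)F(D)=F(RD)=F(D)$. Thus $F(D)$ is a point of $\H^\infty$ fixed by $\rho(\so)$. By Lemma~\ref{lemma:* fixed point}, the only such fixed point is $\o$, so $F(D)=\o=\supp(D)$. Combining with the previous paragraph, $F(AD)=\rho(A)\o=\rho(A)\supp(D)=\supp(AD)$ for all $A$, using the equivariance \eqref{eq:equivarience supp} of $\supp$; that is, $F=\supp\big|_{\Ec}$. One should also note in passing that $\supp\big|_{\Ec}$ is itself $\rho$-equivariant (again by \eqref{eq:equivarience supp}), so the set of $\rho$-equivariant maps is nonempty and the uniqueness statement is not vacuous.

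I do not anticipate a genuine obstacle here; the only points requiring a word of care are (i) that the stabilizer of $D$ in $\psl$ really is $\so$ and not something larger — but this is standard, and moreover we only need that $D$ is $\so$-invariant, which is immediate since rotations preserve the disc, so the full-strength computation of the stabilizer is unnecessary; and (ii) the well-definedness issue, namely that $\rho(A)F(D)$ does not depend on the choice of $A$ with $AD=E$ — but if $AD=A'D$ then $A^{-1}A'\in\Stab(D)=\so$, so $\rho(A^{-1}A')F(D)=F(D)$ by the fixed-point property, whence $\rho(A)F(D)=\rho(A')F(D)$; this consistency is exactly what makes the argument go through and is guaranteed precisely by Lemma~\ref{lemma:* fixed point}. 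Since all the ingredients are already established in the excerpt, the proof is a short assembly, which is why it can be stated with a \texttt{\qeda} right after the lemma.
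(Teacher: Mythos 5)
Your argument is correct and is exactly the one the paper intends: the sentence preceding the lemma invokes the same general principle (an equivariant map from a homogeneous space is the orbit map of the image of the base point, which is pinned down by Lemma~\ref{lemma:* fixed point} as $\o$), which is why the lemma carries no written proof. Your spelled-out version, including the well-definedness check via $\Stab(D)=\so$, fills in precisely the details the paper leaves implicit.
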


We now prove that, up to a convexity condition,  \(\supp\big|_{\Ec}\) extends uniquely to
\(\K_c^\pi\).
Let us define
\[l(K_1,K_2,t)\coloneqq \pi \frac{(1-t) K_1+ tK_2}{\a((1-t) K_1+ tK_2)}\in \K_c^\pi\]
for any \(K_1,K_2\in \K_c^\pi\) and \(0\leq t\leq 1\). In Lemma~\ref{lemma: uniqueness of map}, the \(\rho\)-equivariant map  \(\supp\big|_{\Ec}\) can be uniquely extended from \(\Ec\) to the whole \(\K_c^\pi\), under the convexity condition that is given explicitly below:

\begin{proposition}\label{prop extension supp}
    Suppose that \(f\colon\K_c^{\pi}\to\H^\infty\) is a continuous \(\rho\)-equivariant map such that \(l(K_1,K_2,[0,1])\) is sent to the geodesic segment between \(f(K_1)\) and \(f(K_2)\) for all \(K_1,K_2\in \K_c^\pi\), then \(f\) coincides with \(\supp\).
\end{proposition}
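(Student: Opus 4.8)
The plan is to leverage the uniqueness result already established for ellipses (Lemma~\ref{lemma: uniqueness of map}) together with the convexity hypothesis to propagate the equality $f = \supp$ from $\Ec$ to all of $\K_c^\pi$. First I would observe that $f$ and $\supp\big|_{\K_c^\pi}$ already agree on $\Ec$: indeed $\supp\big|_{\Ec}$ is the orbit map of $\o$ for $\rho$, and since $f$ is $\rho$-equivariant, $f(D)$ is a fixed point of $\rho(\so)$, hence equals $\o = \supp(D)$ by Lemma~\ref{lemma:* fixed point}; then $\rho$-equivariance of $f$ forces $f(AD) = \rho(A)f(D) = \rho(A)\o = \supp(AD)$ for all $A\in\psl$. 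So the content of the proposition is entirely in the extension from ellipses to arbitrary symmetric convex bodies of area $\pi$.

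The key step is to use the density of ellipses in the convex-hull sense. By Lemma~\ref{lem Z=C}, every $K \in \K_c^\pi$ lies in $\CH_{\operatorname{Haus}}(\Ec)$; more concretely, after rescaling to area $\pi$, any $K$ can be written as a limit of bodies of the form $l(E_1, E_2, t)$ — or, iterating, of finite ``convex-combination'' expressions built from ellipses via the operation $l$. The convexity hypothesis on $f$ says precisely that $f\big(l(K_1,K_2,t)\big)$ traces out the $\H^\infty$-geodesic between $f(K_1)$ and $f(K_2)$; and by Remark~\ref{remark midpoint}, $\supp\big(l(K_1,K_2,t)\big)$ likewise traces out the $\H^\infty$-geodesic between $\supp(K_1)$ and $\supp(K_2)$ (this is exactly the statement that the image of the hyperbolic geodesic between two support functions is the normalized Minkowski-interpolation path). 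Since geodesics in $\H^\infty$ are unique and both paths have the same endpoints and the same parametrization by the convex-combination parameter up to the normalization, I would check that $f$ and $\supp$ agree on $l(K_1,K_2,[0,1])$ whenever they agree at $K_1$ and $K_2$. Starting from $\Ec$, where they agree, this shows agreement on all bodies obtained from ellipses by finitely many applications of $l$; by Lemma~\ref{lem Z=C} (or Proposition~\ref{prop: CH1}) these are dense in $\K_c^\pi$ for $d_{\operatorname{Haus}}$, and continuity of both $f$ and $\supp$ (the latter by Lemma~\ref{lem:supp conti inj}) closes the argument.

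The main obstacle I anticipate is making precise the matching of the two parametrizations along $l(K_1,K_2,[0,1])$: a priori $f$ could traverse the geodesic between $f(K_1)$ and $f(K_2)$ at a different speed than $\supp$ does, so ``agreeing as sets'' is not automatically ``agreeing pointwise in $t$.'' To handle this I would use that the endpoint correspondence pins down the two ends, and then argue either (i) via the midpoint characterization in Remark~\ref{remark midpoint} — the midpoint $l(K_1,K_2,\tfrac12)$ maps under $\supp$ to the $\H^\infty$-midpoint, and one can check $f$ must send it there too by a symmetry/equivariance argument when $K_1, K_2$ are chosen in a $\psl$-symmetric position — iterating dyadically and using continuity; or (ii) more robustly, note that it suffices to prove $f = \supp$ on a $d_{\operatorname{Haus}}$-dense subset, and the set of bodies reachable from $\Ec$ through finitely many \emph{midpoints} $l(\cdot,\cdot,\tfrac12)$ is already dense, so one only ever needs the midpoint, not the full parametrization. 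Approach (ii) is cleaner and I would adopt it: reduce to showing $f\big(l(K_1,K_2,\tfrac12)\big) = \supp\big(l(K_1,K_2,\tfrac12)\big)$ given equality at $K_1,K_2$, which follows since the $\H^\infty$-midpoint of two points is unique and both $f$ and $\supp$ send the convex-combination midpoint to it (the latter by Remark~\ref{remark midpoint}, the former by the convexity hypothesis, since the midpoint of a geodesic segment is intrinsically determined).
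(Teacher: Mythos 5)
Your reduction to ellipses and your use of density plus continuity match the paper's overall strategy, and you correctly identified the central difficulty: the hypothesis only prescribes the \emph{image} of \(l(K_1,K_2,[0,1])\) as a set, so \(f\) may reparametrise the geodesic segment. The problem is that your proposed resolution (ii) does not actually overcome this difficulty. You assert that \(f\) sends the combinatorial midpoint \(l(K_1,K_2,\tfrac12)\) to the metric midpoint of \([f(K_1),f(K_2)]\) ``by the convexity hypothesis, since the midpoint of a geodesic segment is intrinsically determined.'' The metric midpoint is indeed intrinsically determined as a point of the segment, but nothing in the hypothesis forces \(f\) to map the parameter value \(t=\tfrac12\) there: \(f\circ l(K_1,K_2,\cdot)\) is merely some continuous map onto the segment with the right endpoints. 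This is exactly the reparametrisation issue you flagged, restated rather than resolved. The paper's fix for a pair of \emph{ellipses} is the equivariance trick you sketch in approach (i): there is \(A\in\psl\) swapping \(E_1\) and \(E_2\) and fixing \(P_{1/2}\), so \(f(P_{1/2})\) is a fixed point of \(\rho(A)\) on the geodesic, hence the metric midpoint. But this argument is unavailable for general pairs \(K_1,K_2\) (no element of \(\psl\) swaps them, and \(P_{1/2}\) is not an ellipse), so neither your dyadic iteration in (i) nor your iterated-midpoint scheme in (ii) can proceed past the first step.

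The missing idea is the paper's Lemma~\ref{lemma: triangle}: dyadic points of the segment \([E_1,E_2]\) are obtained, via a third ellipse \(E_0\), as limits of \emph{intersection points} of geodesic segments whose endpoints are already known to be fixed (the only midpoints ever used are midpoints of ellipse pairs, where the swapping symmetry applies). Since an intersection of two geodesic segments is a set-theoretic condition, it is insensitive to how \(f\) parametrises each segment, and this is what lets the equality propagate to all dyadic combinations of two ellipses; the inductive step for \(n+1\) ellipses likewise pins down \(f(E)\) as the unique intersection point of two geodesic segments arising from two different decompositions of the combination. A secondary, lesser issue with your approach (ii) is that the density of the set of bodies reachable from \(\Ec\) by iterated midpoints is asserted but not obvious, because the area normalisation in \(l\) constrains the coefficients that iterated midpoints can produce; the paper instead works directly with dyadic combinations of \(n\) ellipses, whose density follows from Lemma~\ref{lem Z=C}.
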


For proving Proposition~\ref{prop extension supp}, let us see the following elementary lemma:
\begin{lemma}\label{lemma: triangle}
    Let \(OAB\) be a triangle in an affine space and let \(m_{OA}, m_{OB}\) be the midpoints of the segments \([OA]\) and \([OB]\). For any convex combination \(P_t = (1-t)A+tB \in [A,B]\), where \(t\) is a dyadic number in \((0,1)\), one can construct the segments \([O,P_t]\) with only straight lines passing through the points \(O,A,B,m_{OA},m_{OB}\) or the intersection points of already constructed lines. Moreover, each of these points \(P_t\) is adherent to the set of all such intersection points.
\end{lemma}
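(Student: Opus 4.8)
The plan is to exhibit a straightedge construction: by induction on $n$ I produce every dyadic point $P_{k/2^n}$, together with the cevian line $OP_{k/2^n}$ and the midpoint of that cevian, using only lines through the allowed points. Everything will take place in the affine plane $\Pi$ spanned by $O,A,B$ — a line through two points of $\Pi$ lies in $\Pi$, and since $O,A,B$ are affinely independent all the intersections invoked below are of two genuinely distinct coplanar lines, hence are single points; the only degeneracy one must rule out is that of parallel lines, and I will do so by recognising every auxiliary line as a median or a midline of a triangle.

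The construction rests on one elementary observation about medians. In a triangle $OP'P''$, let $m'$, $m''$ be the midpoints of $[OP']$, $[OP'']$. Then $P'm''$ and $P''m'$ are medians and meet at the centroid $Q=\tfrac13(O+P'+P'')$; the third median line $OQ$ meets $[P'P'']$ at its midpoint $P_{\mathrm{mid}}$, and it meets the midline $m'm''$ (which is parallel to $P'P''$) at $\tfrac12(O+P_{\mathrm{mid}})$, the midpoint of $[O,P_{\mathrm{mid}}]$. Both claims are immediate affine computations.

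Now the induction. At level $0$: $P_0=A$, $P_1=B$; the line $OA$ is the line through the given points $O$ and $m_{OA}$, the line $OB$ the line through $O$ and $m_{OB}$, and one sets $m_{O,P_0}:=m_{OA}$, $m_{O,P_1}:=m_{OB}$. Assume that at level $n$ the points $P_{k/2^n}$ ($0\le k\le 2^n$), the cevian lines $OP_{k/2^n}$ and the midpoints $m_{O,P_{k/2^n}}$ of $[O,P_{k/2^n}]$ have all been drawn — each being an initial point or an intersection of two already drawn lines, each drawn line passing through two such points. For consecutive $a=k/2^n$, $b=(k+1)/2^n$, set $c=(2k+1)/2^{n+1}$ and apply the observation to the triangle $OP_aP_b$: draw the medians $P_a\,m_{O,P_b}$ and $P_b\,m_{O,P_a}$, let $Q$ be their intersection, and draw the line $OQ$. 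Then $P_c:=OQ\cap AB$ is the midpoint of $[P_aP_b]$, the cevian $OP_c$ is $OQ$, and $m_{O,P_c}:=OQ\cap(m_{O,P_a}m_{O,P_b})$ is the midpoint of $[O,P_c]$. The endpoints $A,B$ retain their old cevian midpoints $m_{OA},m_{OB}$, so all of level $n+1$ is covered and the inductive hypothesis holds there. This establishes the first assertion.

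For the adherence statement, observe that each $P_t$ with $t$ dyadic has just been obtained as the intersection point $OQ\cap AB$ of two constructed lines (with $P_{1/2}$ coming from the level-$0$ triangle $OAB$). Since the dyadic numbers are dense in $[0,1]$, the set $\{P_t : t\ \text{dyadic in}\ [0,1]\}$ is dense in the segment $[A,B]$, so every point of $[A,B]$ — in particular every $P_t$ appearing in the statement — lies in the closure of the set of all intersection points produced by these constructions, which is what ``adherent'' means here. The one place demanding care is the inductive bookkeeping: one must check that the midpoint of each newly drawn cevian $OP_c$ is recovered \emph{for free} as $OP_c\cap(m_{O,P_a}m_{O,P_b})$, so that the construction can be iterated indefinitely; this is precisely the second half of the elementary observation, and it is the heart of the argument.
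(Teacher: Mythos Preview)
Your proof is correct, and the construction of the cevians \([O,P_t]\) via iterated medians and midlines is exactly the paper's approach, carried out with cleaner inductive bookkeeping.

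The adherence argument, however, is genuinely different. The paper invokes Pappus's theorem: starting from the triples \((m_{OA},m_{OP},m_{OB})\) and \((A,P_{1/2},B)\), the Pappus configuration is iterated to manufacture an explicit sequence of intersection points converging to \(P_{1/2}\). You bypass this entirely by observing that each dyadic \(P_t\) is already an intersection point of two constructed lines (namely \(OQ\cap AB\)), and then that the dyadic \(P_t\)'s are dense in \([A,B]\), so every such \(P_t\) is automatically an accumulation point of the set of intersection points. This is shorter and entirely sufficient for the lemma as stated. The paper's Pappus route has a hidden advantage for the downstream application (Proposition~\ref{prop extension supp}): the Pappus intersection points lie on honest \emph{segments} between already-controlled points, i.e.\ they are convex combinations in two ways, which is precisely what the convexity hypothesis on \(f\) requires. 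Your intersection \(OQ\cap AB\) has \(P_t\) on the segment \([A,B]\) but outside the segment \([O,Q]\) (the centroid lies strictly between \(O\) and \(P_t\)), so it does not directly feed that application --- though your density of the dyadic \(P_{t'}\) on \([A,B]\) would still rescue it there.
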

\begin{proof}
    It suffices to construct the midpoint \(P_{1/2}\) of the segment \([A,B]\). This allows to decompose the triangle \(OAB\) into two subtriangles \(OAP_{1/2}\) and \(OP_{1/2}B\) and iteratively divide these triangles to construct every dyadic point on \([A,B]\). Since the segment \([O,P_{1/2}]\) is a median of the triangle \(OAB\), it passes through its barycenter which is the intersection point between the two segments \([A,m_{OB}]\) and \([B,m_{OA}]\), see Figure~\ref{fig:constructing dyadic points}. Moreover, \([m_{OA},m_{OB}]\) and \([A,B]\) being parallel, Thales's theorem ensures that the intersection point between \([O,P_{1/2}]\) and \([m_{OA},m_{OB}]\) is at equal distance from \(O\) and \(P_{1/2}\), allowing to iterate this procedure.

    For the density part of the statement, observe that iterating the construction of Pappus's theorem from the two triples of points \((m_{OA},m_{OP},m_{OB})\) and \((A,P_{1/2},B)\), where \(m_{OP}\) is the midpoint of \([O,P_{1/2}]\), produces sequences of intersection points converging to \(P_{1/2}\), see Figure~\ref{fig:approaching dyadic points}. 
\end{proof}

\begin{figure}[ht]
\captionsetup[subfigure]{font=footnotesize}
\centering
\subcaptionbox{Construction of \(P_{1/2}\)}[.5\textwidth]{%
\begin{tikzpicture}[scale=.04]
    \coordinate[label=above:\(O\)] (O) at (104.10514016,171.41030551);
    \coordinate[label=left:\(A\)] (A) at (38.41398803,75.21395906);
    \coordinate[label=right:\(B\)] (B) at (195.45940913,75.21395906);
    \coordinate[label=left:\(m_{OA}\)] (mOA) at (71.25956409,123.31215118);
    \coordinate[label=right:\(m_{OB}\)] (mOB) at (149.78227654,123.31211339);
    \coordinate[label=below:\(P_{1/2}\)] (P12) at (116.93669669,75.21395906);
    
    \draw (O)--(A);
    \draw (A)--(B);
    \draw (O)--(B);
    \draw (mOA)--(mOB);
    \draw[myCyan] (A)--(mOB);
    \draw[myCyan] (B)--(mOA);
    \draw[myRed] (O)--(P12);
\end{tikzpicture}}%
\subcaptionbox{Construction of \(P_{1/4}\)}[.5\textwidth]{\begin{tikzpicture}[scale=.04]
    \coordinate[label=above:\(O\)] (O) at (104.10514016,171.41030551);
    \coordinate[label=left:\(A\)] (A) at (38.41398803,75.21395906);
    \coordinate[label=right:\(B\)] (B) at (195.45940913,75.21395906);
    \coordinate[label=left:\(m_{OA}\)] (mOA) at (71.25956409,123.31215118);
    \coordinate[label=right:\(m_{OB}\)] (mOB) at (149.78227654,123.31211339);
    \coordinate[label=below:\(P_{1/2}\)] (P12) at (116.93669669,75.21395906);
    \coordinate[label=below:\(P_{1/4}\)] (P14) at (77.67534236,75.21395906);
    
    \draw (O)--(A);
    \draw (A)--(B);
    \draw (O)--(B);
    \draw (mOA)--(mOB);
    \draw (A)--(mOB);
    \draw (B)--(mOA);
    \draw (O)--(P12);
    \draw[myCyan] (P12)--(mOA);
    \draw[myCyan] (A)--(110.52091843,123.31215118);
    \draw[myRed] (104.10514016,171.41030551)--(77.67534236,75.21395906);
\end{tikzpicture}}
\caption{Construction of dyadic points on a segment.}
    \label{fig:constructing dyadic points}
\end{figure}
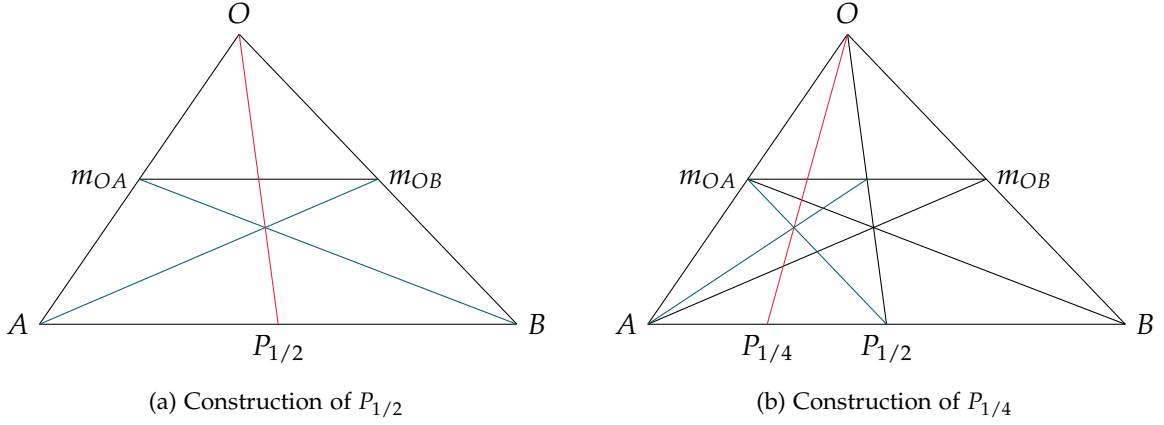

\begin{figure}[ht]
\captionsetup[subfigure]{font=footnotesize}
\centering
\subcaptionbox{First iteration}[.5\textwidth]{%
\begin{tikzpicture}[scale=.04]
    \coordinate[label=left:\(A\)] (A) at (38.41398803,75.21395906);
    \coordinate[label=right:\(B\)] (B) at (195.45940913,75.21395906);
    \coordinate[label=left:\(m_{OA}\)] (mOA) at (71.25956409,123.31215118);
    \coordinate[label=right:\(m_{OB}\)] (mOB) at (149.78227654,123.31211339);
    \coordinate[label=above:\(m_{OP}\)] (m) at (110.520920315,123.31211339);
    \coordinate[label=below:\(P_{1/2}\)] (P12) at (116.93669669,75.21395906);
    \coordinate[] (I1) at (86.4853,107.279);
    \coordinate[] (I2) at (112.66,107.279);
    \coordinate[] (I3) at (138.834,107.279);
    
    \filldraw[black] (mOA) circle (40pt);
    \filldraw[black] (mOB) circle (40pt);
    \filldraw[black] (m) circle (40pt);
    
    \draw (A)--(B);
    \draw (mOA)--(mOB);
    \draw[myCyan] (A)--(m);
    \draw[myCyan] (B)--(m);
    \draw[myCyan] (mOA)--(P12);
    \draw[myCyan] (mOB)--(P12);
    \draw[myCyan] (mOA)--(B);
    \draw[myCyan] (mOB)--(A);
    \draw[myRed] (I1)--(I3);

    \filldraw[myRed] (I1) circle (40pt);
    \filldraw[myRed] (I2) circle (40pt);
    \filldraw[myRed] (I3) circle (40pt);
\end{tikzpicture}}%
\subcaptionbox{Second iteration}[.5\textwidth]{\begin{tikzpicture}[scale=.04]
    \coordinate[label=left:\(A\)] (A) at (38.41398803,75.21395906);
    \coordinate[label=right:\(B\)] (B) at (195.45940913,75.21395906);
    \coordinate[label=left:\(m_{OA}\)] (mOA) at (71.25956409,123.31215118);
    \coordinate[label=right:\(m_{OB}\)] (mOB) at (149.78227654,123.31211339);
    \coordinate[label=above:\(m_{OP}\)] (m) at (110.520920315,123.31211339);
    \coordinate[label=below:\(P_{1/2}\)] (P12) at (116.93669669,75.21395906);
    \coordinate[] (I1) at (86.4853,107.279);
    \coordinate[] (I2) at (112.66,107.279);
    \coordinate[] (I3) at (138.834,107.279);
    \coordinate[] (I4) at (94.0983,99.2627);
    \coordinate[] (I5) at (113.729,99.2627);
    \coordinate[] (I6) at (133.36,99.2627);
    
    \filldraw[black] (mOA) circle (40pt);
    \filldraw[black] (mOB) circle (40pt);
    \filldraw[black] (m) circle (40pt);
    \filldraw[black] (I1) circle (40pt);
    \filldraw[black] (I2) circle (40pt);
    \filldraw[black] (I3) circle (40pt);
    
    \draw (A)--(B);
    \draw (mOA)--(mOB);
    \draw (mOA)--(I1);
    \draw (mOB)--(I3);
    \draw (I1)--(I3);
    \draw[myCyan] (I1)--(P12);
    \draw[myCyan] (I1)--(B);
    \draw[myCyan] (I2)--(A);
    \draw[myCyan] (I2)--(B);
    \draw[myCyan] (I3)--(A);
    \draw[myCyan] (I3)--(P12);
    \draw[myRed] (I4)--(I6);

    \filldraw[myRed] (I4) circle (40pt);
    \filldraw[myRed] (I5) circle (40pt);
    \filldraw[myRed] (I6) circle (40pt);
\end{tikzpicture}}
\caption{Pappus's theorem construction.}
    \label{fig:approaching dyadic points}
\end{figure}
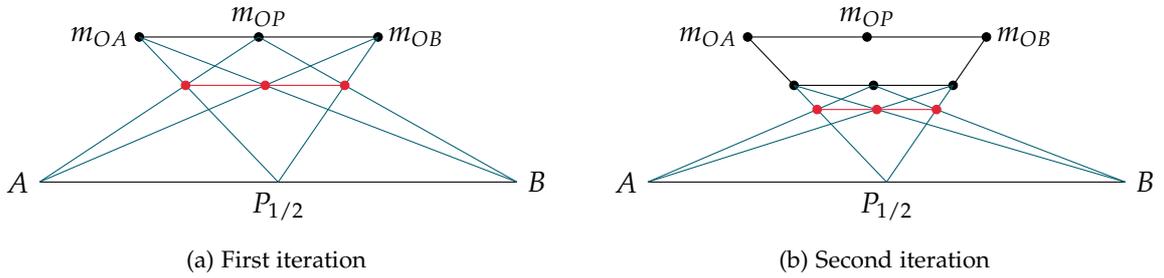

\begin{proof}[Proof of Proposition~\ref{prop extension supp}]
By Corollary~\ref{cor: CH}, the subspace in \(\K_c^{\pi}\) consisting of the convex combinations of finitely many ellipses is dense in \(\K_c^{\pi}\). By density, it suffices to show that the map \(f\) and \(\supp\) coincide on every convex combinations of finitely many ellipses with dyadic coefficients. We proceed by induction on the number of ellipses in the convex combination.
    
    By Lemma~\ref{lemma: uniqueness of map}, the map \(f\) coincides with \(\supp\) on \(\Ec\). Let \(E_1, E_2 \in \Ec\) be arbitrary and let \(P_t\coloneqq l(E_1,E_2,t)\) for \(t\in[0,1]\). By Remark~\ref{remark midpoint}, \(\supp(P_t)\) is the image of the geodesic segment between \(\supp(E_1)\) and \(\supp(E_2)\) in \(\mathbb{H}^\infty\) and \(\supp(P_{1/2})\) is the midpoint on this geodesic. By \(\rho\)-equivariance, \(f\) must also coincide with \(\supp\) on \(P_{1/2}\). Indeed, there exists \(A\in \psl\) such that \(AE_1 = E_2\) and \(AE_2 = E_1\). Therefore, \(\rho(A)\) exchanges \(f(E_1)\) and \(f(E_2)\) and fixes the point \(f(P_{1/2})\in\supp(P_t)\). Since the unique fixed point of \(\rho(A)\) on this geodesic segment is \(\supp(P_{1/2})\), we have \(f(P_{1/2}) = \supp(P_{1/2})\).

    By appealing to Lemma~\ref{lemma: triangle}, we can conclude that \(f(P_{t}) = \supp(P_{t})\) whenever \(t\in [0,1]\) is a dyadic number. Indeed, take another ellipse \(E_0\), then the previous discussion implies that \(f\) and \(\supp\) coincide at the midpoint on the segment of convex combinations between \(E_0\) and \(E_1\), as well as between \(E_0\) and \(E_2\), which correspond to the points \(m_{OA}\) and \(m_{OB}\) in Lemma~\ref{lemma: triangle}. By the convexity assumption on \(f\), if \(P_i\) and \(Q_i\) are convex bodies at which the images under \(f\) and \(\supp\) coincide for \(i=1,2\), and if \(R\) is the unique convex body that can written as a convex combination of both \(P_1, Q_1\) and \(P_2, Q_2\), then \(f\) and \(\supp\) also coincide at \(R\). Now Lemma~\ref{lemma: triangle} yields that for each dyadic number \(t\in[0,1]\), there exists a sequence \((R_n)_n\) converging to \(P_t\) in \(\K_c\) such that \(f(R_n) = \supp(R_n)\). Therefore, \(f(P_t) = \supp(P_t)\).

    In general, let \(\lambda_1,\dots, \lambda_n>0\) and let \(E_1,\dots,E_n\) be \(n\) convex bodies. We say that the Minkowski sum
    \[\lambda_1 E_1 + \lambda_2 E_2 + \cdots +\lambda_n E_n\]
    is a dyadic combination if there exists \(\lambda>0\) such that \(\lambda(\lambda_1+\lambda_2+\cdots+\lambda_n)=1\) and for each \(1\leq j\leq n\), we have \(\lambda\lambda_j=\alpha_k2^{-k}\) for some integers \(k>0\) and \(0\leq \alpha_k< 2^k\).
    
    For the inductive step, suppose that \(f\) coincides with \(\supp\) on all the dyadic combinations of \(n\) ellipses. Let \(E = \lambda_1 E_1 + \lambda_2 E_2 + \dots + \lambda_{n+1}E_{n+1}\) be such a dyadic combination of \(n+1\) ellipses. Suppose in addition that \(S\coloneqq\sum_{i=1}^{n+1}\lambda_i\). This combination can be decomposed in several ways, for example:
    
    \begin{align*}
        E &= \lambda_1 E_1 + (S-\lambda_1)\left(\frac{\lambda_2}{S-\lambda_1}E_2 + \dots+\frac{\lambda_{n+1}}{S-\lambda_1}E_{n+1}\right)\\
        &=  (S-\lambda_{n+1})\left(\frac{\lambda_1}{S-\lambda_{n+1}}E_1 + \dots+\frac{\lambda_{n}}{S-\lambda_{n+1}}E_{n}\right) + \lambda_{n+1} E_{n+1}.
    \end{align*}
    The points
    \[\hat{E}_1 \coloneqq \left(\frac{\lambda_2}{S-\lambda_1}E_2 + \dots+\frac{\lambda_{n+1}}{S-\lambda_1}E_{n+1}\right)\]
    and
    \[\hat{E}_{n+1} \coloneqq\left(\frac{\lambda_1}{S-\lambda_{n+1}}E_1 + \dots+\frac{\lambda_{n}}{S-\lambda_{n+1}}E_{n}\right)\]
    are then also dyadic combinations of \(n\) ellipses. Thus \(f(E)\) belongs to the intersection of the two geodesic segments
    \[[\supp(E_1),\supp(\hat{E}_1)]\]
    and
    \[[\supp(E_{n+1}),\supp(\hat{E}_{n+1})]~,\]
    which is reduced to the singleton \(\{\supp(E)\}\). Hence \(f(E) = \supp(E)\).
\end{proof}

\subsection{Kernels of hyperbolic type}\label{4.2}
The notion of {\it kernel of (real) hyperbolic type} was introduced in \cite{MP2} to embed a set into some real hyperbolic space, and it is used to construct exotic representations. However, we remind the reader that one should not confuse it with a kernel of complex hyperbolic type, both of which are defined in \cite{monod}. Without further mention, all kernels of hyperbolic type in the sequel will be of real hyperbolic type.

\begin{definition}[Kernel of hyperbolic type]
Let \(X\) be a non-empty set. A function \(\beta\colon X\times X\to \mathbb{R}\) is a \emph{kernel of (real) hyperbolic type} if it is symmetric, non-negative, and equal to \(1\) on the diagonal with
\begin{align}\label{hyp_form}
\sum_{i,j=1}^n c_i c_j \beta(x_i,x_j)\leq\left(\sum_{k=1}^n c_k\beta(x_k,x_0)\right)^2
\end{align}
for all \(n\geq 1\), any \(x_0,x_1,\dots,x_n\in X\) and any \(c_1,\dots,c_n\in \mathbb{R}\).
\end{definition}

This notion is closely related to another notion called {\it kernel of positive type}. Indeed, by rearranging the terms in \eqref{hyp_form}, we can see that \(\beta(\cdot,\cdot)\) is a kernel of hyperbolic type if and only if for one (and thus for any) \(z\in X\), the function
\begin{align}\label{eq: pd_form}
N_z(x,y)\coloneqq \beta(x,z)\beta(y,z)-\beta(x,y)
\end{align}
is a kernel of positive type, {\it i.e.} a symmetric function satisfying the inequality
\[\sum_{i,j=1}^n c_i c_j N(x_i,x_j)\geq 0\]
for all \(n\geq 1\), any \(x_1,\dots,x_n\in X\) and any \(c_1,\dots,c_n\in \mathbb{R}\).

Given a kernel of positive type \(N\colon X\times X\to \mathbb{R}\), the {\it GNS construction} (for Gelfand, Naimark, and Segal) will produce a Hilbert space \(\mathcal{H}\), unique up to isomorphism, and a map \(h\colon X\to \mathcal{H}\) such that \(\langle h(x),h(y)\rangle_\mathcal{H}=N(x,y)\) for any \(x,y\in X\). This space is a space of functions generated by \(\Phi_x(y)\coloneqq N(x,y)\). For a proof of this, we refer the reader to \cite[Theorem C.1.4]{Bekka2008kazhdan}.

Referring to the hyperboloid model \(\mathbb{H}^\infty\subset \mathbb{R}\oplus \mathcal{H}\), given a non-void set \(X\), if a function \(\beta\colon X\times X\to \mathbb{R}\) is a kernel of hyperbolic type and if \(N_{x_0}\colon X\times X\to \mathbb{R}\) is an associated kernel of positive type, then the map into hyperbolic space \(\phi\colon X\to \mathbb{H}^\infty\) will be given by
\[\phi(x)\coloneqq \big(\beta(x,x_0),h(x)\big)~,\]
where \(x_0\in X\) is a fixed base point.


For any quadratic form defining a \(\H^\infty\) as in \eqref{eq : lorentzian}, the restriction of \(B\) to \(\H^\infty\) is a kernel of hyperbolic type. Indeed, \eqref{hyp_form} is the reversed Cauchy--Schwarz inequality, see \eqref{eq:RCS}:
if \(X=\sum_{i=1}^n c_ix_i\) and \(Y=x_0\), then by bilinearity 
\eqref{hyp_form} is equivalent to
\[1=B(X,X)=B(X,X)B(Y,Y)\leq \left(B(X,Y)\right)^2~.\]

In particular, the restriction
of  our form \(\A\) to \(\H^\infty\)  is also a kernel of hyperbolic type. Since the definition of kernel of hyperbolic type obviously 
passes to the restriction to any subset, 
  \(\A\) is also a kernel of hyperbolic type on \(\supp^\pi(\K_c^\pi)\).  Interpreting this fact in term of mixed-area (see Remark~\ref{remark area}), we obtain the following result:
  \begin{proposition}\label{prop mink neg}
      Let \(K_0,K_1,\ldots ,K_n\) be symmetric plane convex bodies of positive area. Then for any \(c_1,\ldots,c_n\in \R\), then
      \[\a(K_0)\sum_{i,j=1}^n c_ic_j\a(K_i,K_j)\leq \left(\sum_{k=1}^n c_k \a(K_k,K_0) \right)^2~.\]
  \end{proposition}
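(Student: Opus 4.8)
The plan is to deduce this from the fact, already recorded in the discussion above, that $\A$ restricted to $\supp^\pi(\K_c^\pi)\subset\H^\infty$ is a kernel of hyperbolic type, by rescaling each $K_i$ to area $\pi$ and then absorbing the scaling factors into the coefficients.

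First I would normalise. Set $a_i\coloneqq\a(K_i)>0$ for $0\le i\le n$ and $\tilde K_i\coloneqq\sqrt{\pi/a_i}\,K_i$, so that $\tilde K_i\in\K_c^\pi$ and $\supp(\tilde K_i)\in\supp^\pi(\K_c^\pi)$. By additivity \eqref{eq:addition supp} and positive homogeneity \eqref{eq:lambda supp} of the support function, together with the area formula \eqref{eq:area equality}, polarisation shows that the mixed area $\a(\cdot,\cdot)$ is bilinear with $\a(\lambda K,\mu K')=\lambda\mu\,\a(K,K')$ for $\lambda,\mu>0$, and that $\A(\supp(K),\supp(K'))=\tfrac1\pi\a(K,K')$ for all $K,K'$. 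In particular,
\[\A\big(\supp(\tilde K_i),\supp(\tilde K_j)\big)=\frac{1}{\pi}\,\a(\tilde K_i,\tilde K_j)=\frac{1}{\sqrt{a_ia_j}}\,\a(K_i,K_j).\]

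Next I would apply the defining inequality \eqref{hyp_form} for the kernel of hyperbolic type $\beta=\A|_{\supp^\pi(\K_c^\pi)}$, with base point $x_0=\supp(\tilde K_0)$, points $x_k=\supp(\tilde K_k)$ for $1\le k\le n$, and the coefficient attached to $x_k$ equal to $c_k\sqrt{a_k}$ — which ranges over all of $\R$ as $c_k$ does. Substituting the displayed identity on both sides, and using $\sqrt{a_ia_j}\,\A(\supp(\tilde K_i),\supp(\tilde K_j))=\a(K_i,K_j)$ together with $\sqrt{a_k}\,\A(\supp(\tilde K_k),\supp(\tilde K_0))=\tfrac1{\sqrt{a_0}}\a(K_k,K_0)$, the inequality \eqref{hyp_form} becomes
\[\sum_{i,j=1}^n c_ic_j\,\a(K_i,K_j)\le\frac{1}{a_0}\bigg(\sum_{k=1}^n c_k\,\a(K_k,K_0)\bigg)^2,\]
and multiplying through by $a_0=\a(K_0)$ yields the statement.

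There is no genuine obstacle here: the argument is merely a change of variables layered on top of the reversed Cauchy--Schwarz inequality encoded in the notion of kernel of hyperbolic type. The only point worth a line of verification is that bilinearity and positive homogeneity of the mixed area, as well as the identity $\a=\pi\,\A\circ\supp$, hold for arbitrary convex bodies of positive area and not merely for ellipses; this is immediate from \eqref{eq:addition supp}, \eqref{eq:lambda supp} and \eqref{eq:area equality}.
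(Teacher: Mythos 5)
Your argument is correct and is essentially the paper's own proof: the paper likewise observes that \(\A\) restricted to \(\supp^\pi(\K_c^\pi)\) is a kernel of hyperbolic type (via the reversed Cauchy--Schwarz inequality) and then "interprets this in terms of mixed area", which is exactly the normalisation \(\tilde K_i=\sqrt{\pi/a_i}\,K_i\) with rescaled coefficients \(c_k\sqrt{a_k}\) that you spell out. The only difference is that you make explicit the rescaling step that the paper leaves implicit.
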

We remark that the coefficients can be negative. If \(K=c_1K_1+\cdots+c_nK_n\) with all \(c_i\) being positive, then by properties of the mixed area, the inequality 
above writes
 \[\a(K_0)\a(K)\leq \left(\a(K,K_0) \right)^2~,\]
which is the Minkowski inequality \eqref{eq minkowski}.

For (not necessarily symmetric) convex bodies of higher dimensions (including infinite ones), the polarisation of the second intrinsic volume is also a kernel of hyperbolic type (it follows directly from the fact that it comes from a bilinear form of Lorentzian signature, \cite[Proposition 2.4]{DF}, or \cite[proof of Proposition 3.2]{long})  hence, the result of Proposition~\ref{prop mink neg} also holds in those cases.

\subsection{Relation with the Delzant--Monod--Py construction}\label{section: comparison}
We remark that on the same space, one can define different kernels which may yield embeddings of the space into \(\mathbb{H}^\infty\) with quasi-isometric images. But these quasi-isometries between different images may not necessarily reduce to an isometry.

For example, on the one hand, \cite[Corollary 8.3]{monod} states that \(e^{\frac{1}{2}d_{\mathbb{H}^2}}\) is also a kernel associated to a representation 
\[ \rho'\colon\psl\simeq\Isom^+(\mathbb{H}^2)\hookrightarrow\Isom(\mathbb{H}^\infty)~,\]
which yields a quasi-isometric $\rho'$-equivariant embedding $\psi\colon \mathbb{H}^2\hookrightarrow\mathbb{H}^\infty$ such that, by Lemma~\ref{lem: H2 dist}, 
\begin{align}\label{kern_2}
\cosh \Big(d_{\mathbb{H}^\infty}\big(\psi(A(i)),\psi(i)\big)\Big)= e^{\frac{1}{2}d_{\mathbb{H}^2}(A(i),i)}=\left(\frac{\|A\|^2_F}{2}+\sqrt{\frac{\|A\|^4_F}{4}-1}\right)^{\frac{1}{2}}
\end{align}
for any $A\in \sl$.

Recall that \(\|A\|_F\) is the {\it Frobenius norm} and is given by
$$\left\|\begin{pmatrix}a & b \\ c & d\end{pmatrix}\right\|_F=\sqrt{a^2+b^2+c^2+d^2}~.$$
The following distance formula is elementary:
\begin{lemma}\label{lem: H2 dist}
For any $A,B\in \sl$, $d_{\mathbb{H}^2}(A(i),B(i))=\cosh^{-1}\left(\|B^{-1}A\|_F^2/2\right)$. \qeda
\end{lemma}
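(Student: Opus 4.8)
The plan is to reduce everything to the standard fact that $\sl$ acts on the upper half-plane $\H^2$ by isometries via homographies, so that $d_{\H^2}(A(i),B(i)) = d_{\H^2}\big((B^{-1}A)(i), i\big)$, and then to compute $d_{\H^2}(M(i),i)$ for an arbitrary $M \in \sl$ in terms of $\|M\|_F$. First I would invoke left-invariance: since $B \in \sl$ acts as an isometry of $\H^2$ and $B^{-1}B(i) = i$, we get $d_{\H^2}(A(i),B(i)) = d_{\H^2}(B^{-1}A(i), i)$, so it suffices to establish $d_{\H^2}(M(i),i) = \cosh^{-1}(\|M\|_F^2/2)$ for $M = B^{-1}A \in \sl$.

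For the single-point computation, I would write $M = \begin{pmatrix} a & b \\ c & d\end{pmatrix}$ with $ad - bc = 1$, so $M(i) = \frac{ai+b}{ci+d} = \frac{(ai+b)(\overline{ci+d})}{|ci+d|^2} = \frac{(ac+bd) + i(ad-bc)}{c^2+d^2} = \frac{ac+bd}{c^2+d^2} + \frac{i}{c^2+d^2}$. Using the standard distance formula in the upper half-plane, $\cosh d_{\H^2}(z,i) = 1 + \frac{|z-i|^2}{2\,\mathrm{Im}(z)}$ for $z = x+iy$, which gives $\cosh d_{\H^2}(z,i) = 1 + \frac{x^2 + (y-1)^2}{2y} = \frac{x^2 + y^2 + 1}{2y}$. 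Substituting $x = \frac{ac+bd}{c^2+d^2}$ and $y = \frac{1}{c^2+d^2}$, the numerator becomes $\frac{(ac+bd)^2}{(c^2+d^2)^2} + \frac{1}{(c^2+d^2)^2} + 1 = \frac{(ac+bd)^2 + 1 + (c^2+d^2)^2}{(c^2+d^2)^2}$, while $2y = \frac{2}{c^2+d^2}$, so the ratio is $\frac{(ac+bd)^2 + 1 + (c^2+d^2)^2}{2(c^2+d^2)}$. Now I would use $(ac+bd)^2 + (ad-bc)^2 = (a^2+b^2)(c^2+d^2)$ together with $ad-bc = 1$ to rewrite $(ac+bd)^2 + 1 = (a^2+b^2)(c^2+d^2)$, so the numerator collapses to $(a^2+b^2)(c^2+d^2) + (c^2+d^2)^2 = (c^2+d^2)(a^2+b^2+c^2+d^2) = (c^2+d^2)\|M\|_F^2$, yielding $\cosh d_{\H^2}(M(i),i) = \frac{\|M\|_F^2}{2}$ as claimed.

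Finally I would note $\|B^{-1}A\|_F = \|M\|_F$, so combining the two steps gives $d_{\H^2}(A(i),B(i)) = \cosh^{-1}\big(\|B^{-1}A\|_F^2/2\big)$. There is no serious obstacle here; the only mild care needed is the choice of sign/branch (the expression $\|M\|_F^2/2 \geq 1$ always holds by AM--GM on the four squared entries subject to $ad-bc=1$, so $\cosh^{-1}$ is well-defined), and remembering that everything descends to $\psl$ since both $M$ and $-M$ induce the same homography and have the same Frobenius norm.
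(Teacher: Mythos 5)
Your proof is correct, and the paper itself omits any argument (the lemma is stated as ``elementary'' and closed with a box), so your computation is exactly the standard one that is intended: reduce to $M=B^{-1}A$ by isometry-invariance, compute $M(i)=\frac{(ac+bd)+i}{c^2+d^2}$, and apply the formula $\cosh d_{\H^2}(z,i)=\frac{x^2+y^2+1}{2y}$ together with the Lagrange identity $(ac+bd)^2+(ad-bc)^2=(a^2+b^2)(c^2+d^2)$. All the algebra checks out, and your remarks on well-definedness of $\cosh^{-1}$ and descent to $\psl$ are correct.
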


 On the other hand, appealing to Lemma~\ref{lem_dh}, we have
\begin{align}\label{kern_3}
\cosh\Big(d_{\mathbb{H}^\infty}(\iota(A(i)),\iota(i))\Big)=\frac{1}{2\pi}\int_0^{2\pi}\big|A^\intercal(\cos\theta,\sin\theta)^\intercal\big|~\mathrm{d}\theta
\end{align}
for any $A\in \sl$. It is clear that the two quantities \eqref{kern_2} and \eqref{kern_3} are in general different when $A\notin \so$, since \eqref{kern_3} is an elliptic integral in terms of the entities of the matrix \(A\), while \eqref{kern_2} is an elementary function of these entities. Hence, the associated embeddings $\psi$ and $\iota$ of $\mathbb{H}^2$ into $\mathbb{H}^\infty$ are quasi-isometric but not isometric to each other. This is because the representation \(\rho'\) is not irreducible.

Suppose now we have two irreducible representations \(\psl\to \Isom(\mathbb{H}^\infty)\) and the \(\psl\)-orbits with respect to these two representations of the unique fixed point by \(\so\) in \(\mathbb{H}^\infty\) are quasi-isometric, then \cite[Theorem B]{MP} states that this quasi-isometry must reduce to an isometry. In \cite{DP} and \cite{MP}, using a different kernel of hyperbolic type, one can also produce an  exotic representation \(\rho_t\) of \(\psl\) in \(\Isom(\mathbb{H}^\infty)\), see the Introduction. 
From what was said just above, we know that \(f_{\frac 1 2 }(\H^2)\) is isometric to 
\(\iota(\H^2)\). In the rest of this section, we check this fact by a direct computation.

Let us first recall the construction from \cite{DP,MP}. Consider the action of \(\psl\simeq \Isom(\mathbb{H}^2)\) on the boundary \(\partial\mathbb{H}^2\simeq \SS^1\) of \(\mathbb{H}^2\) induced by its isometric action on the hyperbolic plane. As before, we parametrise the unit circle \(\SS^1\) by \((\cos(\theta),\sin(\theta))\) for \(\theta\in[0,2\pi)\). Then we can also define an action of \(\psl\) on \(L^2(\SS^1)\) by
\begin{align}\label{eq: action DP}
A.f\coloneqq \mathrm{Jac}(A^{-1})^{\frac{3}{2}}f\circ A^{-1}~,
\end{align}
for every \(A\in\psl\) and \(f\in L^2(\mathbb{R})\), where \(\mathrm{Jac}(A)\) is the Jacobian of \(A\colon \SS^1\to \SS^1\) with respect to the angular measure \(\mathrm{d}\theta\) on \(\SS^1\). We remark that although the two constructions eventually pass to a \(\psl\)-action on some space of functions on \(\SS^1\), the action given in \eqref{eq: action DP} is very different from the action defined in \eqref{eq:equivarience supp} using the support functions of convex bodies.

Now we embed \(\SS^1\) into \(\mathbb{C}\) by \((\cos(\theta),\sin(\theta))\mapsto e^{i\theta}\) for \(\theta\in[0,2\pi)\). For every non-negative integer \(n\geq 0\), let \(e_n\colon \SS^1\to \mathbb{C}\) be the function given by \(z\mapsto z^n\). Define the linear operator \(\mathcal{L}\colon L^2(\SS^1;\mathbb{C})\to L^2(\SS^1;\mathbb{C})\) by sending \(e_n\) to \(\lambda_n e_n\), where \(\lambda_0=1\) and
\[\lambda_n\coloneqq\prod_{k=0}^{n-1}\frac{2k-1}{2k+3}\]
for all \(n\geq 1\). Consider the bilinear form
\[\langle f,h\rangle_\mathcal{L}\coloneqq \frac{1}{2\pi}\int_0^{2\pi} f\overline{\mathcal{L}(h)}~\mathrm{d}\theta\]
defined for \(f,h\in L^2(\SS^1;\mathbb{C})\). We remark that the \(\psl\)-action on \(\SS^1\) defined above will also induce a group action on \(L^2(\SS^1;\mathbb{C})\), but the usual inner product is not preserved, whereas \(\langle\cdot,\cdot\rangle_\mathcal{L}\) is preserved. 

Restrict \(\mathcal{L}\) to \(L^2(\SS^1)\), the space of real-valued square integrable functions on \(\SS^1\). We still denote by \(\o\in L^2(\SS^1)\) the constant function on \(\SS^1\) of value \(1\). Let
\[\mathcal{X}\coloneqq \psl.\o\subset L^2(\SS^1)\]
be the orbit of the constant function \(\o\) under the action of \(\psl\) described above. \cite{DP,MP} showed that the bilinear form \(\langle\cdot,\cdot\rangle_\mathcal{L}\) is a \(\psl\)-invariant kernel of hyperbolic type, and yields an irreducible representation \(\rho_\mathcal{L}\colon \psl\to \Isom(\mathbb{H}^\infty)\). Moreover, as \(\o\) is the unique fixed point of \(\so<\psl\), the space \(\mathcal{X}\) can also be identified with the associated symmetric space \(\mathbb{H}^2\) and yields a quasi-isometric embedding \(\iota_\mathcal{L}\colon \mathcal{X}\simeq\mathbb{H}^2\hookrightarrow \mathbb{H}^\infty\). We remark that \(\rho_\mathcal{L}\) is exactly \(\rho_{\frac{1}{2}}\) in \cite{MP}.

Let \(A_t\coloneqq \mathrm{diag}(e^t, e^{-t})\in\psl\) for \(t\in\R\). In particular, we remark that \(A_t^{-1}=A_{-t}\).

\begin{lemma}\label{formula jacobian}
For \(t\geq 0\) and every \(\theta\in[0,2\pi)\), the Jacobian of the map \(A_{-t}\colon \SS^1\to\SS^1\) on the unit circle with respect to the angular measure \(\mathrm{d}\theta\) is given by
\[\mathrm{Jac}(A_{-t})(\theta)=\frac{1}{|\cosh(t)-\sinh(t)e^{i\theta}|^2}~.\]
\end{lemma}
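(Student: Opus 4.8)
The plan is to make the boundary action of $A_{-t}$ explicit and then simply differentiate it. First I would transfer the action to the unit disc: identifying the upper half-plane model $\H^2$ with the disc via the Cayley transform, so that $\partial\H^2$ becomes $\SS^1\subset\mathbb{C}$ with $z=e^{i\theta}$, the isomorphism $\psl\simeq\operatorname{PSU}(1,1)$ carries $A_{-t}=\operatorname{diag}(e^{-t},e^{t})$ to the matrix $\begin{pmatrix}\cosh t & -\sinh t\\ -\sinh t & \cosh t\end{pmatrix}$. (Concretely, conjugating $A_{-t}$ by the Cayley matrix $\begin{pmatrix}1 & -i\\ 1 & i\end{pmatrix}$ gives this matrix, which lies in $\operatorname{SU}(1,1)$ since $\cosh^2 t-\sinh^2 t=1$.) Hence $A_{-t}\colon\SS^1\to\SS^1$ is the restriction of the M\"obius transformation
\[ f(z)=\frac{\cosh t\,z-\sinh t}{\cosh t-\sinh t\,z}~. \]
As a consistency check, $f$ fixes $z=\pm1$, the Cayley images of the fixed points $\infty$ and $0$ of the half-plane map $z\mapsto e^{-2t}z$, and $f'(1)=e^{2t}$, $f'(-1)=e^{-2t}$ match the repelling, resp.\ attracting, character of these two fixed points.

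Next I would read off the Jacobian. Since $f$ extends holomorphically to a neighbourhood of the closed unit disc and preserves $\SS^1$, writing $w=f(z)$ with $z=e^{i\theta}$ and $w=e^{i\phi(\theta)}$ and differentiating in $\theta$ gives $i w\,\phi'(\theta)=i z\,f'(z)$, hence $\phi'(\theta)=zf'(z)/w$; as $\phi'$ is real and the induced circle map is orientation-preserving, $\mathrm{Jac}(A_{-t})(\theta)=\phi'(\theta)=\bigl|zf'(z)/w\bigr|=|f'(e^{i\theta})|$ (using $|z|=|w|=1$). A one-line differentiation gives
\[ f'(z)=\frac{\cosh^2 t-\sinh^2 t}{(\cosh t-\sinh t\,z)^2}=\frac{1}{(\cosh t-\sinh t\,z)^2}~, \]
and therefore $\mathrm{Jac}(A_{-t})(\theta)=|f'(e^{i\theta})|=|\cosh t-\sinh t\,e^{i\theta}|^{-2}$, which is the asserted formula.

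The only delicate point is the first step: one must fix the normalisation of the identification $\partial\H^2\simeq\SS^1$ used throughout so that $A_{-t}$ corresponds to $f$ with fixed points exactly at $\theta=0$ and $\theta=\pi$ — a rotation of the parameter would shift the argument of $e^{i\theta}$ in the final answer. One can bypass the Cayley transform: for any $g=\begin{pmatrix}\alpha & \beta\\ \bar\beta & \bar\alpha\end{pmatrix}\in\operatorname{SU}(1,1)$ one has $g'(z)=(|\alpha|^2-|\beta|^2)(\bar\beta z+\bar\alpha)^{-2}=(\bar\beta z+\bar\alpha)^{-2}$, so the induced circle map has Jacobian $|\bar\beta e^{i\theta}+\bar\alpha|^{-2}$; it then remains only to check that $A_{-t}$ corresponds to $\alpha=\cosh t$, $\beta=-\sinh t$, which is forced by the location of its fixed points $\pm1$ on $\SS^1$ together with its translation length. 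With that normalisation settled, the rest is routine.
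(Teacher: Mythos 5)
Your argument is correct and follows the same route as the paper: conjugate $A_{-t}$ by the Cayley matrix to the $\operatorname{SU}(1,1)$ element $\begin{pmatrix}\cosh t & -\sinh t\\ -\sinh t & \cosh t\end{pmatrix}$ and differentiate the resulting M\"obius map of $\SS^1$. You merely spell out the ``simple computation of the derivatives'' (via $\phi'(\theta)=|f'(e^{i\theta})|$) that the paper leaves implicit, and the normalisation check you add is consistent with the paper's conventions.
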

\begin{proof}
The action of \(\psl\) on the unit circle \(\SS^1\subset\mathbb{C}\) factors through the homographic \(\mathrm{SU}(1,1)\)-action via the conjugation
\begin{align*}
\psl&\to \mathrm{SU}(1,1)\\
A&\mapsto CAC^{-1}
\end{align*}
where
\[C=\frac{1}{\sqrt{2}}\begin{pmatrix}
    1  &-i\\ 1 & i
\end{pmatrix}~.\]
Hence, we have \(\mathrm{Jac}(A_{-t})=\mathrm{Jac}(D_{-t})\), where
\[D_{-t}=\begin{pmatrix}
    \cosh(t) & -\sinh(t)\\
    -\sinh(t) & \cosh(t)
\end{pmatrix}\colon e^{i\theta}\mapsto \frac{\cosh(t)e^{i\theta}-\sinh(t)}{-\sinh(t)e^{i\theta}+\cosh(t)}~.\]
Now a simple computation of the derivatives yields the desired formula. 
\end{proof}

We notice that \(\mathcal{L}(\o)=\o\) and \(A_t.\o=\mathrm{Jac}(A_{-t})^{\frac{3}{2}}\o\). By the definition of hyperbolic distance, we have
\begin{align}\label{kern_1}
\cosh\big(d_\mathbb{\H^\infty}(\iota_\mathcal{L}(A_t.\o),\iota_\mathcal{L}(\o))\big)=\langle A_{t}.\o,\o\rangle_\mathcal{L}=\frac{1}{2\pi}\int^{2\pi}_0\frac{\mathrm{d}\theta}{|\cosh(t)-\sinh(t)e^{i\theta}|^3}~.
\end{align}
As commented earlier, since our representation \(\rho\colon\psl\to \Isom(\H^\infty)\) is conjugate to \(\rho_{\frac{1}{2}}\) by Corollary~\ref{cor:conjug}, the associated embeddings of \(\H^2\) via \(\iota_\mathcal{L}\) and \(\iota\) must be isometric, {\it i.e.} the formulae \eqref{kern_3} and \eqref{kern_1} are the same.

Before showing this, we will need an auxiliary lemma. First, we define
\[I(k)\coloneqq\int_0^{\frac{\pi}{2}}\frac{\mathrm{d}u}{\big(1-k^2\sin^2(u)\big)^{\frac{3}{2}}}~,\]
\[E(k)\coloneqq\int_0^{\frac{\pi}{2}}\sqrt{1-k^2\sin^2(u)}~\mathrm{d}u~,\]
and 
\[K(k)\coloneqq\int_0^{\frac{\pi}{2}}\frac{\mathrm{d}u}{\sqrt{1-k^2\sin^2(u)}}~.\]
We recall that \(K(k)\) is called the {\it complete elliptic integral of the first kind}, while \(E(k)\) is called the {\it complete elliptic integral of the second kind}.
\begin{lemma}
For any \(0< k<1\), we have
\begin{align}\label{eq: elliptic integral}
I(k)=\frac{E(k)}{1-k^2}~.
\end{align}
\end{lemma}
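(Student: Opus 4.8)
The plan is to avoid series expansions or special-function identities entirely and instead exhibit an explicit primitive that links the integrands of $I(k)$ and $E(k)$. First I would introduce the auxiliary function
\[
g(u)\coloneqq\frac{\sin u\cos u}{\sqrt{1-k^2\sin^2u}},\qquad u\in\left[0,\tfrac{\pi}{2}\right],
\]
which is smooth on $[0,\pi/2]$ since $0<k<1$ keeps $1-k^2\sin^2u$ bounded away from $0$, and which vanishes at both endpoints $u=0$ and $u=\pi/2$. Differentiating and factoring out $(1-k^2\sin^2u)^{-3/2}$, one gets that $g'(u)$ equals $(1-k^2\sin^2u)^{-3/2}$ times the quantity $(\cos^2u-\sin^2u)(1-k^2\sin^2u)+k^2\sin^2u\cos^2u$, which expands to $1-2\sin^2u+k^2\sin^4u$ after using $\cos^2u=1-\sin^2u$.

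The one genuinely computational step — and essentially the only place any manipulation is required — is the algebraic identity
\[
1-2\sin^2u+k^2\sin^4u=\frac{1}{k^2}\bigl(1-k^2\sin^2u\bigr)^2-\frac{1-k^2}{k^2},
\]
which is checked by comparing the coefficients of $1,\sin^2u,\sin^4u$ on both sides (equivalently, by substituting $s=\sin^2u$ and verifying a quadratic identity in $s$). Substituting this back splits the derivative cleanly into the two integrands we want:
\[
g'(u)=\frac{1}{k^2}\sqrt{1-k^2\sin^2u}-\frac{1-k^2}{k^2}\,\bigl(1-k^2\sin^2u\bigr)^{-3/2}.
\]

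Finally I would integrate this identity over $[0,\pi/2]$. The left-hand side integrates to $g(\pi/2)-g(0)=0$, while the right-hand side equals $\frac{1}{k^2}E(k)-\frac{1-k^2}{k^2}I(k)$ by the very definitions of $E$ and $I$. Solving the resulting relation $0=\frac{1}{k^2}E(k)-\frac{1-k^2}{k^2}I(k)$ yields $I(k)=\dfrac{E(k)}{1-k^2}$, as claimed. I do not anticipate any real obstacle here: the integral $I(k)$ converges since the integrand is continuous on a compact interval for $0<k<1$, the boundary term disappears automatically because $g$ vanishes at the endpoints, and the only bookkeeping is the trigonometric simplification of $g'$ together with the short coefficient-matching identity.
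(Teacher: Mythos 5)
Your proof is correct, and it takes a genuinely different route from the paper. The paper invokes the classical Legendre relation for the derivative of the complete elliptic integral of the first kind, $K'=\frac{E}{k(1-k^2)}-\frac{K}{k}$ (quoted from Whittaker--Watson), computes $K'(k)=\int_0^{\pi/2}\frac{k\sin^2u}{(1-k^2\sin^2u)^{3/2}}\,\mathrm{d}u$ by differentiating under the integral sign, and then verifies $I=kK'+K$, from which the identity follows. You instead exhibit an explicit antiderivative $g(u)=\frac{\sin u\cos u}{\sqrt{1-k^2\sin^2u}}$ whose derivative decomposes exactly into $\frac{1}{k^2}\sqrt{1-k^2\sin^2u}-\frac{1-k^2}{k^2}(1-k^2\sin^2u)^{-3/2}$, and the vanishing of the boundary terms gives the relation directly. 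I checked your computation of $g'$ and the coefficient-matching identity $1-2s+k^2s^2=\frac{1}{k^2}(1-k^2s)^2-\frac{1-k^2}{k^2}$; both are right. Your argument is entirely self-contained and avoids both differentiation under the integral sign and any external reference to the theory of elliptic integrals, which is a real gain in elementarity; the paper's route has the mild advantage of situating the identity within the standard $E$, $K$, $K'$ formulary, but either proof is complete and short.
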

\begin{proof}
Recall a standard relation between the complete elliptic integrals and their derivatives (see for example \cite[\S 22.736]{analysis}):
\[K'=\frac{E}{k(1-k^2)}-\frac{K}{k}~.\]
Now we can check that \(I=kK'+K\). Indeed, we can compute
\[K'(k)=\int_0^{\frac{\pi}{2}}\frac{k\sin^2(u)}{\big(1-k^2\sin^2(u)\big)^{\frac{3}{2}}}~\mathrm{d}u~,\]
so 
\[kK'(k)+K(k)=\int_0^{\frac{\pi}{2}}\frac{k^2\sin^2(u)}{\big(1-k^2\sin^2(u)\big)^{\frac{3}{2}}}~\mathrm{d}u+\int_0^{\frac{\pi}{2}}\frac{1-k^2\sin^2(u)}{\big(1-k^2\sin^2(u)\big)^{\frac{3}{2}}}~\mathrm{d}u=I(k)~.\]
Combining the two relations, we get the desired result.
\end{proof}

The proof of the isometry between the two images relies on simplifying the distance formula into standard elliptic integrals of the second kind.
\begin{proposition}
    For \(t\geq 0\), we have
    \begin{align}\label{eq: kern_1 kern_3}
    \frac{1}{2\pi}\int^{2\pi}_0\frac{\mathrm{d}\theta}{|\cosh(t)-\sinh(t)e^{i\theta}|^3} = \frac{1}{2\pi}\int^{2\pi}_0\left|\begin{pmatrix}
        e^t & 0\\
        0 & e^{-t}
    \end{pmatrix}\begin{pmatrix}
        \cos{\theta}\\
        \sin{\theta}
    \end{pmatrix}\right|\mathrm{d}\theta~.
    \end{align}
In turn, the images \( \supp^\pi(\E^\pi_c)=\iota(\H^2)\) and \(\iota_\mathcal{L}(\mathcal{X})\) are isometric.
\end{proposition}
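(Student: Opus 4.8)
The plan is to compute each side of \eqref{eq: kern_1 kern_3} in closed form as a multiple of the complete elliptic integral of the second kind $E(k)$ with one and the same modulus, and then to eliminate $E$ via the identity \eqref{eq: elliptic integral}.

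\emph{Right-hand side.} One expands $\left|\mathrm{diag}(e^t,e^{-t})(\cos\theta,\sin\theta)^\intercal\right|^2=e^{2t}\cos^2\theta+e^{-2t}\sin^2\theta=e^{2t}\bigl(1-(1-e^{-4t})\sin^2\theta\bigr)$. Setting $k^2:=1-e^{-4t}$ (so $0\le k<1$ for $t\ge 0$) and using that $\sin^2\theta$ has period $\pi$ and is symmetric about $\pi/2$, the integral over $[0,2\pi]$ is four times the integral over $[0,\pi/2]$, so the right-hand side of \eqref{eq: kern_1 kern_3} equals $\tfrac{2e^t}{\pi}E(k)$.

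\emph{Left-hand side.} First compute $|\cosh t-\sinh t\,e^{i\theta}|^2=\cosh^2 t+\sinh^2 t-2\cosh t\sinh t\cos\theta=\cosh 2t-\sinh 2t\cos\theta$. Then substitute $\theta=2u$ and use $\cos 2u=2\cos^2 u-1$ to get $\cosh 2t-\sinh 2t\cos\theta=(\cosh 2t+\sinh 2t)-2\sinh 2t\cos^2 u=e^{2t}\bigl(1-(1-e^{-4t})\cos^2 u\bigr)$; replacing $\cos^2 u$ by $\sin^2 u$ via $u\mapsto \tfrac\pi2-u$ and again exploiting the period-$\pi$ and reflection symmetries turns the left-hand side into $\tfrac{2}{\pi}e^{-3t}I(k)$ with the \emph{same} $k^2=1-e^{-4t}$. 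Now \eqref{eq: elliptic integral} gives $I(k)=E(k)/(1-k^2)=e^{4t}E(k)$, hence the left-hand side equals $\tfrac{2}{\pi}e^{-3t}\cdot e^{4t}E(k)=\tfrac{2e^t}{\pi}E(k)$, which coincides with the right-hand side. This proves \eqref{eq: kern_1 kern_3}.

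\emph{Isometry of the images.} For the last assertion, note that by $\rho$-equivariance of $\iota$, by $\rho_{\mathcal L}$-equivariance of $\iota_{\mathcal L}$, by the fact that $\so$ fixes the base point $\o$, and by the Cartan decomposition $\psl=\so\{A_t\}\so$, the extrinsic distance $d_{\H^\infty}(\iota(x),\iota(y))$ depends only on $d_{\H^2}(x,y)$, and likewise for $\iota_{\mathcal L}$. Comparing \eqref{kern_3} (via Lemma~\ref{lem_dh}) with \eqref{kern_1} through the just-proved identity \eqref{eq: kern_1 kern_3}, these two functions of $d_{\H^2}(x,y)$ agree. Therefore $\iota_{\mathcal L}\circ\iota^{-1}$ is a distance-preserving bijection from $\iota(\H^2)=\supp^\pi(\E_c^\pi)$ onto $\iota_{\mathcal L}(\mathcal X)$, i.e.\ the two images are isometric (as was already predicted by Corollary~\ref{cor:conjug} together with \cite[Theorem~B]{MP}).

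The only genuine subtlety is in the left-hand side computation: after the half-angle substitution one must factor out $e^{2t}$ rather than $e^{-2t}$, so that the coefficient of $\sin^2 u$ in the bracket is $-(1-e^{-4t})$, matching the $1-k^2\sin^2 u$ appearing in the definition of $I(k)$; with the wrong factorisation one lands on an integrand of the form $(1+(e^{4t}-1)\sin^2 u)^{-3/2}$ and cannot apply \eqref{eq: elliptic integral} directly. Everything else is routine manipulation of elementary trigonometric and hyperbolic identities.
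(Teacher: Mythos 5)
Your proof is correct and follows essentially the same route as the paper: both sides are reduced to multiples of the complete elliptic integral $E(k)$ with $k^2=1-e^{-4t}$ via half-angle substitutions, and the identity $I(k)=E(k)/(1-k^2)$ closes the argument; the equivariance/Cartan-decomposition reduction for the isometry statement is also the paper's. The only difference is cosmetic — the paper first shifts $\theta\mapsto\theta-\pi$ to flip the sign of the cosine term before halving the angle, whereas you halve directly and then reflect $u\mapsto\tfrac{\pi}{2}-u$.
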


\begin{proof}
Since \(\iota_\mathcal{L}\) and \(\iota\) are both \(\psl\)-equivariant, by identifying \(\o\in \mathcal{X}\) and \(D\in\E^\pi_c\), up to translations and rotations by elements in \(\psl\), to show that the two images are isometric, it suffices to prove for any \(t\geq 0\),
\[d_\mathbb{\H^\infty}(\iota_\mathcal{L}(A_t.\o),\iota_\mathcal{L}(\o))=d_\mathbb{\H^\infty}(\iota(A_tD),\iota(D))~,\]
which further reduces to showing the equality \eqref{eq: kern_1 kern_3}.

Let us denote by \(I_1(t)\) the left-hand side of \eqref{eq: kern_1 kern_3} and by \(I_2(t)\) the right-hand side of \eqref{eq: kern_1 kern_3}. First, notice that
\begin{align}\label{eq: trigo norm}
|\cosh(t)+\sinh(t)e^{i\psi}|^2=\cosh(2t)+\sinh(2t)\cos(\psi)~.\
\end{align}
So by exploiting the trigonometric identities, we have
\begin{align*}
    I_1(t)&\stackrel{\psi=\theta-\pi}{=}\frac{1}{2\pi}\int^{\pi}_{-\pi}\frac{\mathrm{d}\psi}{|\cosh(t)+\sinh(t)e^{i\psi}|^3}\\
    &\stackrel{\eqref{eq: trigo norm}}{=}\frac{1}{2\pi}\int^{\pi}_{-\pi} \big(\cosh(2t)+\sinh(2t)\cos(\psi)\big)^{-\frac{3}{2}}~\mathrm{d}\psi\\
        &\stackrel{2u=\psi}{=}\frac{1}{\pi}\int^{\frac{\pi}{2}}_{-\frac{\pi}{2}} \big((\cosh(2t)+\sinh(2t))-2\sinh(2t)\sin^2(u)\big)^{-\frac{3}{2}}~\mathrm{d}u\\
        &=\frac{2}{\pi}\int_0^{\frac{\pi}{2}} \big(e^{2t}-(e^{2t}-e^{-2t})\sin^2(u)\big)^{-\frac{3}{2}}~\mathrm{d}u\\
        &=\frac{2e^{-3t}}{\pi}\int_0^\frac{\pi}{2}\big(1-(1-e^{-4t}))\sin^2(u)\big)^{-\frac{3}{2}}~\mathrm{d}u\\
        &=2e^{-3t}I(k)/\pi~,
\end{align*}
where we set \(k\coloneqq\sqrt{1-e^{-4t}}\in(0,1)\), {\it viz.} \(t>0\). Now using the relation \eqref{eq: elliptic integral}, on the one hand we can deduce that for all \(t>0\)
\[I_1(t)=2e^t E(k)/\pi~.\]
On the other hand, we observe that
\begin{align*}
    I_2(t)&=\frac{e^t}{2\pi}\int_0^{2\pi}\sqrt{1-k^2\sin(u)^2}~\mathrm{d}u=\frac{2e^t}{\pi}\int_0^{\frac{\pi}{2}}\sqrt{1-k^2\sin(u)^2}~\mathrm{d}u=2e^t E(k)/\pi~.
\end{align*}
Hence, we can conclude that \(I_1(t)=I_2(t)\) for all \(t> 0\). For the case \(t=0\), the verification is straightforward.
\end{proof}

\bibliographystyle{alpha}
\bibliography{exotic}

\noindent{\sc François Fillastre}\\
\noindent{\sc  IMAG, Université de Montpellier, CNRS, Montpellier, France}

\noindent{\it Email address:} {\tt \href{mailto:francois.fillastre@umontpellier.fr}{francois.fillastre@umontpellier.fr}}

\vspace{1em}

\noindent{\sc Yusen Long}\\
\noindent{\sc Université Paris-Est Créteil, CNRS, LAMA UMR8050, F-94010 Créteil, France}

\noindent{\it Email address:} {\tt \href{mailto:yusen.long@u-pec.fr}{yusen.long@u-pec.fr}}

\vspace{1em}

\noindent{\sc David Xu}\\
\noindent{\sc School of Mathematics, Korea Institute for Advanced Study (KIAS), 02455 Seoul, Korea}

\noindent{\it Email address:} {\tt \href{mailto:davidxu@kias.re.kr}{davidxu@kias.re.kr}}

\end{document}